\newtheorem{thm}{Theorem}[section]
\newtheorem{lem}[thm]{Lemma}
\newtheorem{cor}[thm]{Corollary}
\newtheorem{prop}[thm]{Proposition}
\newtheorem{defn}[thm]{Definition}
\newtheorem{remark}[thm]{Remark}
\newtheorem{exam}[thm]{Example}
\numberwithin{equation}{section}
\newtheorem*{thmnn}{Theorem}
\newenvironment{proofof(1)}{\paragraph{\textit{Proof of (1)}}}{\hfill$\square$}
\newcommand{\boldotimes}{{\fontsize{5}{7}\selectfont {$\pmb\otimes$}}}
\newcommand{\B}{\mathcal{B}}
\newcommand{\G}{\mathcal{G}}
\newcommand{\metricd}{\mathbf{d}}
\newcommand{\radiusr}{\mathbf{r}}
\newcommand{\bu}{\mathbf{u}}
\newcommand{\initN}{N_0}
\newcommand{\maxNN}{N_1}
\newcommand{\maxNNn}{N\!_1}
\newcommand{\capitalK}{K}
\newcommand{\cT}{T}
\newcommand{\cX}{\mathcal{X}}
\newcommand{\cZ}{\mathcal{Z}}
\newcommand{\xNone}{x\!_{_{N\!_1}}\!}
\newcommand{\tildexNone}{\tilde x\!_{_{N\!_1}}\!}
\title{Quasi-Sturmian colorings on regular trees}
\author{Dong Han Kim}
\address{Department of Mathematics Education, Dongguk University–Seoul, 30 Pildong-ro 1-gil, Junggu, Seoul, 04620}
\email{kim2010@dongguk.edu}
\author{Seul Bee Lee}
\address{Department of Mathematical Sciences, Seoul National University, Kwanak-ro 1, Kwanak-gu,
Seoul, 08826}
\email{seulbee.lee@snu.ac.kr}
\author{Seonhee Lim}
\address{Department of Mathematical Sciences, Seoul National University, Kwanak-ro 1, Kwanak-gu,
Seoul, 08826}
\email{seonhee.lim@gmail.com}
\author{Deokwon Sim}
\address{Department of Mathematical Sciences, Seoul National University, Kwanak-ro 1, Kwanak-gu,
Seoul, 08826}
\email{deokwon.sim@snu.ac.kr }
\date{\today}
\begin{document}

\maketitle

\begin{abstract}
Quasi-Sturmian words, which are infinite words with factor complexity eventually $n+c$ share many properties with Sturmian words.
In this paper, we study the quasi-Sturmian colorings on regular trees. There are two different types, bounded and unbounded, of quasi-Sturmian colorings.
We obtain an induction algorithm similar to Sturmian colorings.
We distinguish them by the recurrence function.
\end{abstract}

\section{Introduction}

Factor complexity $p_\bu(n)$ of an infinite word $\bu$ is the number of distinct subwords of length $n$ appearing in $\bu$.
Factor complexity is a classical invariant which measures the disorder of words. 
Hedlund and Morse showed that a word is eventually periodic if and only if its factor complexity is bounded (see \cite{MH}).
Sturmian words are infinite words which have unbounded minimal factor complexity, i.e. $p_\bu(n)=n+1$. Sturmian words are cutting sequences of irrational rotations and enjoy many striking properties \cite{Ra}, \cite{Ca}, \cite{Lo}. An induction algorithm is developed using the Rauzy graph, which is a graph whose vertices are the subwords of length $n$ in $\bu$.

An infinite word $\bu$ is \emph{quasi-Sturmian} if there are integers $c$ and $\initN$ such that $p_\bu(n)=n+c$ for $n\ge \initN$.
Cassaigne showed that a quasi-Sturmian word is an image of a Sturmian word by a non-periodic morphism (see \cite{Ca}).

Factor complexity and Sturmian words have been generalized to Sturmian colorings of trees by the first and third authors \cite{KL}. By a \emph{coloring} of a regular tree $\cT$, we mean a vertex coloring with finite alphabet, i.e. a surjective map $\phi:V\cT\rightarrow\mathcal{A}$ from the vertex set $V\cT$ to the set $\mathcal{A}$ of {\it{alphabet}}, such that $|\mathcal{A}|<\infty$. For Sturmian colorings, we developed an induction algorithm using the graph of colored balls of radius $n$, which are analogs of subwords of length $n$ in a Sturmian word \cite{KL2}.


For subtrees $\cT_1$ and $\cT_2$ of $\cT$, we define a \emph{color-preserving homomorphism} $f:\cT_1\rightarrow \cT_2$ of a coloring $\phi$ as a graph homomorphism such that $\phi(v)=\phi(f(v))$ for all $v\in V\cT_1$.
Let $\mathbf d$ be the metric on $\cT$ giving length $1$ on each edge.

\begin{defn}
The \emph{$n$-ball $\mathcal{B}_n(u)$ of center $u$} is the closed $\mathbf{d}$-ball of radius $n$ and center $u$. Two $n$-balls $\mathcal{B}_{n}(u)$ and $\mathcal{B}_{n}(v)$ are \emph{equivalent} if there is a color-preserving isometry $f:\mathcal{B}_{n}(u)\rightarrow\mathcal{B}_{n}(v)$.  
We denote the equivalence class of $\mathcal{B}_{n}(u)$ by $[\mathcal{B}_{n}(u)]$ and call it a \emph{colored $n$-ball}.
Let $\mathbb{B}_{\phi}(n)$ be the set of colored $n$-balls of $\phi$.

The \emph{factor complexity} $b_\phi (n) = |\mathbb{B}_\phi(n)|$ of a coloring $\phi$ is the number of colored $n$-balls in $(\cT, \phi)$.
\end{defn}
By convention, we denote by $\B_{-1}(u)$ the empty ball and let $b_\phi(-1) = 1$.
Clearly, $b_\phi(0)=|\mathcal{A}|$.
Factor complexity is either a bounded function or a strictly increasing function (see Theorem 2.7 in \cite{KL}).

For a given coloring $\phi$, let $\Gamma$ be the group of color-preserving isometries of $\cT$.
 {
The quotient $X=\Gamma\backslash \cT$ has a structure of an \emph{edge-indexed graph}, which is a graph equipped with an index map $i:EX\rightarrow \mathbb{N}$ defined as follows: 
}
Let $e\in EX$ be an oriented edge with the initial vertex $x\in VX$ and the terminal vertex $y\in VX$.
Let $\tilde{x}$ be a lift of $x$ in $\cT$.
The index $i(e)$ is the number of lifts of $y$ among the neighboring vertices of $\tilde{x}$.
We will sometimes denote $e$ by $[x,y]$ and denote $i(e)$ by $i(x,y)$.
We call $\cX=(X,i)$ the \emph{quotient (edge-indexed) graph of $(\cT,\phi)$}. 
Let $\pi:\cT\rightarrow X$ be the covering map.
There is a coloring $\phi_0$ of $X$ such that $\phi=\phi_0\circ\pi$. 

We say that a coloring is \emph{periodic} if its quotient graph is a finite graph.
A coloring $\phi$ is periodic if and only if the factor complexity $b_\phi(n)$ is bounded (see \cite{KL}).
A \emph{Sturmian coloring} $(\cT,\phi)$ is a coloring with unbounded minimal factor complexity  $b_\phi(n)=n+1$.

\begin{defn}\label{Def:Quasi-Sturmian}
We say that a coloring is \emph{quasi-Sturmian} if there exists a pair of integers $c$ and $N_0$  such that $b(n)=n+c$ for $n\geq N_0$, i.e. 
\begin{equation}\label{uniquespecial} b(n+1)-b(n)=1 \textrm{ for each } n\geq N_0.\end{equation}
\end{defn}
We assume that $\initN$ is the minimal integer satistying (\ref{uniquespecial}).
If $b(n+1)>b(n)$, there are at least two distinct $n$-balls $\B_{n}(u)$ and $\B_{n}(v)$ such that $[\B_{n}(u)]=[\B_{n}(v)]$ but $[\B_{n+1}(u)]\not=[\B_{n+1}(v)]$.
We call such a colored $n$-ball $[\B_{n}(u)]$ \emph{special}.
A quasi-Sturmian coloring has a unique special $n$-ball for all $n \geq \initN$ which we denote by $S_n$. 

In \cite{KL}, it was shown that a quotient graph of a Sturmian coloring is either a geodesic ray or an infinite geodesic with loops possibly attached to each vertex. 
The \emph{type set} $\Lambda_u$ of a vertex $u\in V\cT$ is the set of non-negative integers $n$ for which $[\B_n(u)]$ is special. 
A vertex $u$ is said to be of \emph{bounded type} if $\Lambda_u$ is a finite set. 
 {For a vertex $u$ of bounded type, the maximal type $\tau(u)$ of $u$ is the maximum of elements in $\Lambda_u$.}

We say that a coloring $\phi$ is of \emph{bounded type} if each vertex (or equivalently a vertex) of $(\cT, \phi)$ is of bounded type.
Otherwise, we say that a coloring $\phi$ is of \emph{unbounded type}.
For a coloring of bounded type, we define the subgraph $G$ of $X$ as the graph consisting of the vertices whose lifts are of maximal type less than or equal to $\maxNN$ (see the equation \eqref{Eq:N_1} for the definition).

We first characterize the quotient graph of a quasi-Sturmian coloring.

\begin{thm}[Quotient graphs of quasi-Sturmian colorings]\label{Thm:Main1}
If $\phi$ is a quasi-Sturmian coloring, then the quotient graph is one of the following graphs.
\begin{figure}[h]
\begin{center}\begin{tikzpicture}[every loop/.style={}]
  \tikzstyle{every node}=[inner sep=-1pt]
  \node(-1) at (-3.5, 1  ) {\tiny$\bullet$};
  \node(0)  at (-3.5, 0  ) {\tiny$\bullet$};
  \node(1)  at (-3   ,0.5) {\tiny$\bullet$};
  \node(2)  at (-2   ,0.5) {\tiny$\bullet$};
  \node(3)  at (-1   ,0.5) {\tiny$\bullet$};
  \node(4)  at ( 0   ,0.5) {\tiny$\bullet$};
  \node(5)  at ( 1   ,0.5) {\tiny$\bullet$};
  \node(6)  at ( 2   ,0.5) {\tiny$\bullet$};
  \node(7)  at ( 3   ,0.5) {$~.~.~.$};

  \node(8) at (-3.5,0.8){\tiny$\bullet$};
  \node(9) at (-3.5,0.5){$\vdots$};
  \node(10) at (-3.5,0.2){\tiny$\bullet$};

\draw[dotted] (-3,0.5) .. controls (-3.2,1) and (-2.8,1) .. (-3,0.5);
\draw[dotted] (-2,0.5) .. controls (-2.2,1) and (-1.8,1) .. (-2,0.5);
\draw[dotted] (-1,0.5) .. controls (-1.2,1) and (-0.8,1) .. (-1,0.5);
\draw[dotted] (0,0.5) .. controls (-0.2,1) and (0.2,1) .. (0,0.5);
\draw[dotted] (1,0.5) .. controls (0.8,1) and (1.2,1) .. (1,0.5);
\draw[dotted] (2,0.5) .. controls (1.8,1) and (2.2,1) .. (2,0.5);

\tikzstyle{every loop}=   [-, shorten >=.5pt]

  \path[-] 
	(-1) edge (1)
	(0)  edge (1)
	(1)  edge (2)	 
	(2)  edge (3)	
	(3)  edge (4)
	(4)  edge (5)	
	(5)  edge (6)
	(6)  edge (7)
	(8)  edge (1)
	(10) edge (1);

\draw[dotted] (-4,1.2) rectangle (-2.5,-0.2);
\node at (-3.25, 1.5) {$G$};

  \tikzstyle{every node}=[inner sep=-1pt]

  \node(-116)  at (-4   ,-1) {$.~.~.$};
  \node(-115)  at (-3.5   ,-1){\tiny$\bullet$};
  \node(-114)  at (-3   ,-1) {\tiny$\bullet$};
  \node(-113)  at (-2.5   ,-1) {\tiny$\bullet$};
  \node(-112)  at (-2   ,-1) {\tiny$\bullet$};
  \node(-111)  at (-1.5   ,-1) {\tiny$\bullet$};
  \node(110)  at ( -1   ,-1) {\tiny$\bullet$};
  \node(111)  at ( -0.5   ,-1) {\tiny$\bullet$};
  \node(112)  at ( 0   ,-1) {\tiny$\bullet$};
  \node(113)  at ( 0.5   ,-1) {\tiny$\bullet$};
  \node(114)  at (1   ,-1) {\tiny$\bullet$};
  \node(115)  at (1.5   ,-1) {\tiny$\bullet$};
  \node(116)  at (2   ,-1) {\tiny$\bullet$};
  \node(117)  at (2.5   ,-1) {$.~.~.$};

\tikzstyle{every loop}=   [-, shorten >=.5pt]

  \path[-] 
   (-116) edge (-115)
   (-115) edge (-114)
   (-114) edge (-113)
   (-113) edge (-112)
   (-112) edge (-111)
   (-111) edge (110)
   (110) edge (111)
	(111)  edge (112)	 
	(112)  edge (113)	
	(113)  edge (114)
	(114)  edge (115)	
	(115)  edge (116)
	(116)  edge (117);
\end{tikzpicture}\end{center}
\end{figure}

More precisely, the quotient graph of a coloring of bounded type is the first graph, where as the quotient graph of a coloring of unbounded type is a geodesic ray or a biinfinite geodesic.
\end{thm}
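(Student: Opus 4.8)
The plan is to read off the shape of the quotient graph $\cX=(X,i)$ from the way the unique special ball propagates through all radii. Since $b(n)=n+c$ is strictly increasing, the coloring is aperiodic, so $X$ is an infinite, connected, locally finite edge-indexed graph. Because the fiber $\pi^{-1}(x)$ over a vertex $x\in VX$ is a single $\Gamma$-orbit, every lift of $x$ carries the same colored ball, so for each $n$ there is a well-defined surjection $\beta_n\colon VX\to\mathbb{B}_\phi(n)$, $x\mapsto[\B_n(\tilde x)]$, and $b(n)$ equals the (finite) number of classes of the equivalence relation $x\sim_n y\iff\beta_n(x)=\beta_n(y)$. The hypothesis $b(n+1)-b(n)=1$ for $n\ge\initN$ then says that, refining $\sim_n$ to the finer $\sim_{n+1}$, exactly one class splits and it splits into exactly two; this splitting class is realized by the unique special ball $S_n$.

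First I would bound the branching of $X$, showing that it has only finitely many vertices where the skeleton of genuine ball-growth branches (has three or more directions carrying independently growing ball-types), so that the infinite part of this skeleton is a geodesic. The mechanism is a counting argument tying the complexity increment to branching: at a branch vertex $x$, a lift $\tilde x$ admits several inequivalent outward extensions of its ball, and propagating these to large radius produces extra special balls; since $b(n+1)-b(n)=1$ allows only one special ball per level for $n\ge\initN$, the total branching is bounded, whence only finitely many branch vertices. Making this precise means relating the indices $i(e)$ at $x$ to the number of pairwise inequivalent $(n+1)$-extensions realized along each outgoing direction and summing the resulting contributions to $b(n+1)-b(n)$. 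I expect this to be the \emph{main obstacle}, as it is exactly where the edge-indexing and the combinatorics of ball-extension must be combined; the governing sublemma is that each vertex contributes to the complexity increment according to how many of its directions carry independently growing ball-types, and quasi-Sturmianity caps the total such contribution by $1$ for all large $n$.

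Next I would track the centers of the special balls. The special balls organise into a coherent family through the induction algorithm of \cite{KL2} (the analog for colorings of the nested right-special factors of a Sturmian word), so that their centers $u_n$ have projections $\pi(u_n)$ running coherently along the geodesic part of $X$ found above. Here the dichotomy of the paper enters. If $\phi$ is of unbounded type, some vertex $u$ has infinite type set $\Lambda_u$, so $[\B_n(\tilde u)]$ is special for infinitely many $n$; together with the uniqueness of $S_n$ this confines the special locus to a single $\Gamma$-orbit and forces $X$ to be a pure geodesic, a ray if $X$ is one-ended and a biinfinite geodesic if it is two-ended. If instead $\phi$ is of bounded type, every type set is finite, so the projections $\pi(u_n)$ cannot stabilise and must escape to infinity along a single end; the finite subgraph $G$ of vertices of maximal type at most $\maxNN$ then absorbs all of the finitely many branch vertices and the loops, and beyond $G$ the graph is a geodesic ray, giving the first picture.

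Finally I would assemble the cases and settle the loops. In the bounded-type case the aperiodicity is concentrated in the single escaping direction, so exactly one ray emanates from the finite core $G$ and the loops surviving along it record the periodic, non-growing off-ray directions encoded by the indices; this is the first graph. In the unbounded-type case the existence of a vertex of infinite type, fed back into the index relations at each vertex of the skeleton, rules out any surviving loop (a loop would force a periodic subtree incompatible with specialness at all large radii) and leaves precisely a geodesic ray or a biinfinite geodesic according to the number of ends. The two delicate points in this last step are confirming the absence of loops in the unbounded case and verifying that the two-ended possibility genuinely occurs; both are handled by combining the infinite type set with the constraint that the indices at each skeleton vertex sum to the degree of $\cT$.
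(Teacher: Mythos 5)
Your high-level picture (dichotomy into bounded/unbounded type, finite core plus ray in one case, geodesic in the other) matches the paper, but the step you yourself flag as the main obstacle is exactly the one that is missing, and the mechanism you propose for it does not obviously work. You want to say that a vertex of $X$ with three or more ``independently growing'' directions forces two special $n$-balls at some level $n\ge\initN$. But three distinct neighboring classes of a vertex $x$ give three \emph{distinct} colored balls around the lifts of those neighbors, not two inequivalent $(n+1)$-extensions of one and the same colored $n$-ball; the bridge from branching in $X$ to a violation of $b(n+1)-b(n)=1$ is precisely what has to be built, and your sketch does not build it. The paper takes a different and more concrete route: for bounded type it shows that two vertices with the same maximal type $\ge\initN$ are in the same class (Lemma 2.2 / Corollary~\ref{Cor:TheSameMaximalTypeImpliesTheSameClass}, via a covering argument with the unique special ball) and that the maximal types of adjacent vertices can differ by at most one, with exactly one neighbor of maximal type $m+1$ and one of $m-1$ (Lemma~\ref{Lem:TheDiffereceOfMaximalTypesBetweenAdjacentVerticesIsLessThan2}); this immediately linearizes $X$ beyond the finite set $G$ of vertices of maximal type $\le\maxNN$. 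For unbounded type it proves that the vertices of a $1$-ball carry at most three distinct type sets (Proposition~\ref{Prop:ForUnboundedType,TheNeighboringVerticesHaveAtMost3TypeSets}), using that type sets of non-equivalent vertices within distance $2$ are eventually disjoint; combined with ``same type set implies same class'' this bounds the degree in $X$ by $2$. Your appeal to ``the uniqueness of $S_n$ confines the special locus to a single $\Gamma$-orbit'' also overreaches: two vertices whose $n$-balls are both the special $S_n$ need not be in the same class, since equivalence of classes requires agreement of balls of \emph{all} radii.

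A second, concretely false point is your treatment of loops. You claim that in the unbounded case a loop ``would force a periodic subtree incompatible with specialness at all large radii'' and hence is ruled out, and that in the bounded case $G$ absorbs all loops. A loop at $x\in VX$ only records that a lift $\tilde x$ has a neighbor in its own class; it creates no periodic subtree. Indeed the paper's Theorem~\ref{Thm:TheQuotientGraphOfQS} draws the ray and the biinfinite geodesic with loops possibly attached at every vertex, Proposition~\ref{Prop:TheQuotientGraphOfBoundedType} allows a loop at every ray vertex $x_m$ with $m\ge\maxNN$ (consistent with Lemma~\ref{Lem:TheDiffereceOfMaximalTypesBetweenAdjacentVerticesIsLessThan2}, which permits a neighbor of the same maximal type), and already for Sturmian colorings the quotient is a geodesic ``with loops possibly attached to each vertex.'' So this part of your argument proves something stronger than the theorem that is in fact not true, which signals that the index-counting mechanism you invoke there is unsound.
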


\begin{figure}[h]
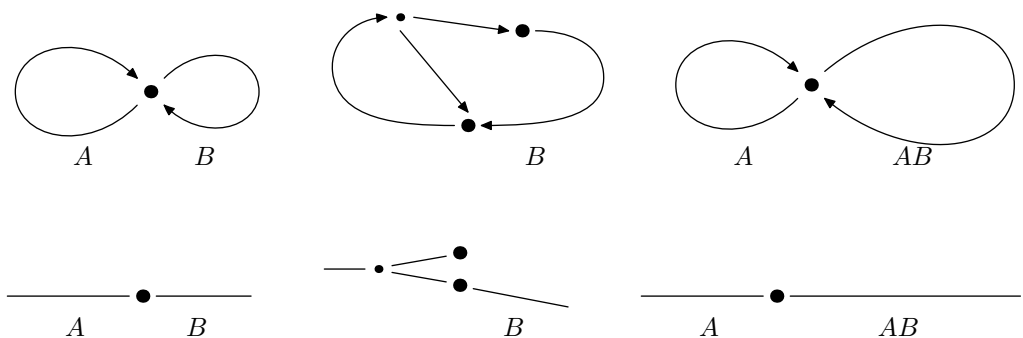

\includegraphics{quasisturmian_graph-2.mps} \qquad
\includegraphics{quasisturmian_graph-3.mps} \qquad 
\includegraphics{quasisturmian_graph-4.mps}

\vspace{1cm}

\includegraphics{quasisturmian_graph-12.mps}  \qquad
\includegraphics{quasisturmian_graph-13.mps} \qquad
\includegraphics{quasisturmian_graph-14.mps} 
\caption{The evolution of Rauzy graphs of a quasi-Sturmian word (above) and The evolution of $\G_n$ of a quasi-Sturmian coloring on a tree (below)}\label{Rauzygraph}
\end{figure}

Our second main theorem is the induction algorithm of quasi-Sturmian colorings.
As was mentioned in the beginning, the Rauzy graph (or the factor graph) of an infinite word $\bu$ is an oriented graph whose vertices are the factors (or subwords) of length $n$. 
For two factors $\mathbf{v}$ and $\mathbf{w}$ of length $n$, there is an edge from $\mathbf{v}$ to $\mathbf{w}$ if there are letters $a$ and $b$ such that $\mathbf{v}a=b\mathbf{w}$.
There is a continued fraction algorithm for Sturmian words in terms of Rauzy graphs (see \cite{Ra} for details).  
The evolution of Rauzy graph is shown in Figure~\ref{Rauzygraph}.

Analogous factor graph for colorings of trees is defined in \cite{KL2}: the factor graph $\G_n$ is defined as the graph whose vertices are the colored $n$-balls. Its edges are pairs of colored $n$-balls appearing in $(\cT, \phi)$ of distance $1$, i.e. $(D_n, E_n)$ with vertices $v,w \in V\cT$ such that $[\B_n(v)]=D_n$, $[\B_n(w)]=E_n$ with $\metricd (v,w)=1$.
An induction algorithm of $\G_n$ for Sturmian colorings is developed in \cite{KL2} (see Figure~\ref{Rauzygraph}).

The following theorem shows how the factor graphs $\G_n$ of quasi-Sturmian colorings evolve as $n$ goes to infinity.

\begin{thm}[Induction algorithm]\label{Thm:Main2}
For an acyclic quasi-Sturmian coloring, the factor graph $\G_n$ falls into one of the three cases:

\begin{enumerate}
\item[(I)] $S_n \neq C_n$ and one of $S_n$, $C_n$ is $A_n$ or $B_n$:
 $\G_n$ is a linear graph, i.e., all vertices have degree less than or equals to 2.

\item[(II)] $S_n, A_n, B_n, C_n$ are all distinct: $\G_n$ is a tripod and $S_n$ is the unique vertex of degree 3 in $\G_n$.

\item[(III)] $S_n, A_n, B_n$ are distinct and $C_n = S_n$: The special ball $S_n$ is a vertex of degree 2 in $\G_n$.
\end{enumerate}
Suppose that $\G_n$ corresponds to (I) and $m$ is the number of vertices of $\G_n$ which are connected to $S_n$ through $C_n$ in $\G_n$.
Then $\G_{n+i}$ corresponds to (II) for $0 < i \le m$ and either $\G_{n+m}$ corresponds to (I) again or  $\G_{n+m}$ corresponds to (III)  and  $\G_{n+m+1}$ corresponds to (I):
%
%
$$\text{(I)} \to \text{(II)} \to \cdots \to \text{(II)} \to \text{(I)} \quad \text{ or }  \quad  \text{(I)} \to \text{(II)} \to \dots \to \text{(II)} \to \text{(III)} \to \text{(I)}. 
$$
\end{thm}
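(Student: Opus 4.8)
The plan is to analyse the restriction map $p_n\colon \G_{n+1}\to\G_n$ sending $[\B_{n+1}(u)]\mapsto[\B_n(u)]$, and to read the evolution of $\G_n$ off the behaviour of this map over the unique special ball $S_n$. Because $b(n)=n+c$ for $n\ge\initN$, the vertex count grows by exactly one at each step; combined with the uniqueness of the special ball, every $n$-ball other than $S_n$ has a single $(n{+}1)$-extension while $S_n$ has exactly two, so $p_n$ is a bijection on vertices away from $S_n$ and exactly two-to-one over it, with preimages the colored $(n{+}1)$-balls $S_n^+$ and $S_n^-$ restricting to $S_n$. Since $\cT$ is connected and the acyclicity hypothesis removes the loops allowed by Theorem~\ref{Thm:Main1}, $\G_n$ is a genuine tree. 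Hence $\G_{n+1}$ is obtained from $\G_n$ by replacing the vertex $S_n$ with an edge joining $S_n^+$ and $S_n^-$ and redistributing the edges formerly incident to $S_n$ between the two children, the identity $|E\G_{n+1}|=|V\G_{n+1}|-1=|E\G_n|+1$ confirming that the edge $S_n^+S_n^-$ is the only new edge.

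First I would establish the trichotomy by a local degree analysis at $S_n$. The neighbours $A_n,B_n,C_n$ record the colored balls seen in the directions that govern the splitting, and a short computation relates the degree of $S_n$ in $\G_n$ to their coincidence pattern: when the two extension directions are realised by distinct neighbours and the third direction is redundant, $S_n$ has degree $\le 2$ and $\G_n$ is linear (case I); when $S_n,A_n,B_n,C_n$ are pairwise distinct, $S_n$ is a genuine degree-$3$ vertex, and since $S_n$ is the only ball at which distinct extensions occur it is the unique such vertex, so $\G_n$ is a tripod (case II); and the boundary coincidence $C_n=S_n$ produces the degree-$2$ configuration of case (III).

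The core of the argument is the transition rule, which I would prove by tracking where the sphere distinguishing $S_n^+$ from $S_n^-$ sits. Starting from a linear $\G_n$ (case I), the claim is that both old neighbours of $S_n$ are inherited by the same child, say $S_n^+$, which therefore acquires degree $3$ and becomes the apex of a tripod whose branch vertex is the new special ball $S_{n+1}$, i.e. $\G_{n+1}$ is in case (II). The decisive local fact is that the colors separating the two extensions are read off on the $C_n$-side, so that the forced extension pins $S_{n+1}$ to the neighbour of the old apex lying one step along the leg through $C_n$. Iterating, each subsequent split peels exactly one vertex off that leg while preserving a single degree-$3$ vertex, so $\G_{n+i}$ remains in case (II) as long as the leg is nonempty; this is where the parameter $m$, the number of vertices connected to $S_n$ through $C_n$, enters as the length of the run.

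Finally I would treat the exit from the run of (II)'s. After the apex has advanced $m$ steps it reaches the far end of the $C_n$-leg and the third direction degenerates: either the two inherited edges again land on a common child, the branch collapses, and $\G_{n+m}$ is linear (returning to case I); or the degeneracy first passes through the configuration $C_{n+m}=S_{n+m}$ of case (III), after which one further split restores linearity and $\G_{n+m+1}$ is in case (I). Distinguishing these outcomes amounts to deciding whether the terminal special ball lies in the interior or at the extremity of the shortened leg, which yields precisely the two chains $\text{(I)}\to\text{(II)}\to\cdots\to\text{(II)}\to\text{(I)}$ and $\text{(I)}\to\text{(II)}\to\cdots\to\text{(II)}\to\text{(III)}\to\text{(I)}$. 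The hard part, and the step I expect to absorb most of the work, is the bookkeeping of the third paragraph: rigorously controlling the location of the genuinely new special ball $S_{n+1}$ inside $\G_{n+1}$, since it is this position, rather than the bare isomorphism type of the graph, that separates cases (II) and (III) and drives the induction.
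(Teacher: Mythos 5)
Your high-level plan (a trichotomy at $S_n$ followed by tracking the length $m$ of the $C_n$-leg as it shrinks by one per step) matches the shape of the paper's Proposition~\ref{graph_develop}, but the local surgery you use to pass from $\G_n$ to $\G_{n+1}$ is not the correct one, and the error is not cosmetic. You replace $S_n$ by an edge joining its two extensions $S_n^+=A_{n+1}$ and $S_n^-=B_{n+1}$ and let one child inherit both old neighbours, so that a \emph{child of $S_n$} becomes the degree-$3$ vertex. In fact $A_{n+1}$ and $B_{n+1}$ are in general not adjacent to each other; what actually happens (Lemma~\ref{lem2.3_qs}) is that \emph{both} of them are strongly adjacent to $S_{n+1}=\overline{C_n}$, the unique extension of the neighbour $C_n$ of $S_n$. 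The single edge from $S_n$ to $C_n$ is thus duplicated into the two edges from $A_{n+1}$ and $B_{n+1}$ to $S_{n+1}$, and the branch vertex of the resulting tripod is $\overline{C_n}$, not a child of $S_n$ (compare Figure~\ref{cycle1}). Your own later sentence (``the forced extension pins $S_{n+1}$ to the neighbour of the old apex lying one step along the leg through $C_n$'') is the correct statement and contradicts your surgery description; with the surgery as you state it $\G_{n+1}$ would remain linear and no tripod would ever appear, and your edge count (the edge $S_n^+S_n^-$ being ``the only new edge'') records the wrong new edge.

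The second gap is that you never use acyclicity where the argument needs it. You invoke it only to claim that $\G_n$ is a tree, which (i) does not follow directly from the paper's definition of acyclic (absence of cycles \emph{through $S_n$}), and (ii) even if granted, does not bound the degree of $S_n$, so it does not make your trichotomy exhaustive: a priori $S_n$ could be weakly adjacent to some ball $D\notin\{A_n,B_n,C_n,S_n\}$ and have degree at least $4$. The paper rules this out via Lemma~\ref{lem2.4_qs} and Proposition~\ref{prop:adjacent}: such a $D$ would force the cycle $[S_{n+1}A_{n+1}\overline{D}B_{n+1}S_{n+1}]$ in $\G_{n+1}$, contradicting acyclicity. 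That step is the substantive use of the hypothesis; it is what restricts the neighbours of $S_n$ to $\{A_n,B_n,C_n\}$, makes cases (I)--(III) the only possibilities, and (together with Lemma~\ref{lem2.3_qs}) keeps every non-special vertex at degree at most $2$ throughout the evolution. Your proposal needs this input both for the trichotomy and for the transition rule.
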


In Section~\ref{Sec_bounded}, we further investigate the quotient graph $\cX$ of a quasi-Sturmian coloring of bounded type.
We will show that we can obtain $\cX$ by attaching a finite edge indexed colored graph to a geodesic ray whose lift is a part of a periodic coloring.
More precisely, for a subgraph $Y$ of $\cT$, a \emph{periodic extension to $\cT$} of a coloring of $Y$ is a periodic coloring on $\cT$ which coincides with the coloring on $Y$.

\begin{thm}[Quotient graphs of colorings of bounded type]\label{Thm:Main3}
Let $\cX=(X,i)$ be the quotient graph of a coloring $(\cT,\phi)$.
The following statements are equivalent.
\begin{enumerate}
\item The coloring $\phi$ is a quasi-Sturmian coloring of bounded type.
\item There is a finite connected subgraph $G$ of the quotient graph $X$ such that $X-G$ is a connected infinite ray and any connected component of $\cT-\widetilde{G}$ has a periodic extension to $\cT$ where $\widetilde{G}$ is the union of lifts of $G$.
\end{enumerate}
\end{thm}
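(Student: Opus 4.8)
The plan is to prove the two implications separately, building (1)$\Rightarrow$(2) on Theorem~\ref{Thm:Main1} and the repetition structure of non-special balls, and (2)$\Rightarrow$(1) on a direct complexity estimate.

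For (1)$\Rightarrow$(2), I would start from Theorem~\ref{Thm:Main1}: since $\phi$ is quasi-Sturmian of bounded type, its quotient graph $\cX$ is the first graph of the figure, so there is a finite connected subgraph $G$---the vertices whose lifts have maximal type at most $\maxNN$---with $X-G$ a connected infinite ray. This gives the combinatorial part of~(2); the content is to produce the periodic extensions. The key tool is the unique-extension property: since $b_\phi(n+1)-b_\phi(n)=1$ for $n\ge\initN$, exactly one colored $n$-ball $S_n$ is special, and every non-special colored $n$-ball determines its colored $(n+1)$-ball uniquely. Fix a component $C$ of $\cT-\widetilde G$. A ball whose center lies at distance greater than $n$ from $\widetilde G$ is contained in $C$ and, being non-special, continues deterministically; as there are only finitely many colored balls of radius $\maxNN$, the colored balls read along $C$ are eventually periodic. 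I would then upgrade this eventual periodicity to a genuine periodic coloring $\psi_C$ of $\cT$: extract a finite fundamental block from the repeating pattern in $C$, tile $\cT$ with copies of it, and verify $\psi_C=\phi$ on $C$. I expect this construction to be the main obstacle---matching the edge indices at the vertex where $C$ attaches to $\widetilde G$ and checking that the tiling is compatible with the regular-tree structure, so that the group of color-preserving isometries of $\psi_C$ acts cocompactly.

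For (2)$\Rightarrow$(1), assume the structure in~(2) and estimate $b_\phi(n)$. A colored $n$-ball whose center is at distance more than $n$ from $\widetilde G$ lies in a single periodic component, whose eventually stable supply of colored balls contributes no new splittings; hence for large $n$ a colored $n$-ball can split into distinct $(n+1)$-balls only at the interface between the finitely many periodic components and the core $G$. Combined with the dichotomy that $b_\phi$ is either bounded or strictly increasing (Theorem~2.7 of~\cite{KL}) and with the fact that the infinite ray $X-G$ makes $b_\phi$ unbounded, the interface analysis should force exactly one splitting of multiplicity one at each large scale, that is $b_\phi(n+1)-b_\phi(n)=1$ for all large $n$; pinning down the threshold $\initN$ and the constant $c$ is the delicate bookkeeping here. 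Bounded type then follows because the center of the unique special $n$-ball $S_n$ must recede into the periodic components as $n\to\infty$, so no fixed vertex is special at infinitely many scales; alternatively, Theorem~\ref{Thm:Main1} now applies and the nontrivial finite core $G$ attached to the ray rules out the unbounded-type graphs.
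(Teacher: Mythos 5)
Your overall architecture (two implications, with Theorem~\ref{Thm:Main1} supplying the shape of $X$ and a linear complexity bound plus strict monotonicity supplying the converse) is the same as the paper's, but both halves have a genuine gap precisely at the points you flag as delicate. For $(1)\Rightarrow(2)$, ``extract a finite fundamental block from the repeating pattern in $C$ and tile $\cT$ with copies of it'' is not a construction that makes sense on a $d$-regular tree: a component $Y$ of $\cT-\widetilde{G}$ is a branching subtree, not a word, and eventual periodicity read along the quotient ray does not by itself yield a coloring of all of $\cT$ that agrees with $\phi$ on all of $Y$, including near its boundary. The paper's route (Lemmas~\ref{Lem:OnY,UniquelyExtended} and~\ref{lem_periodic_ext}) is to form the finite edge-indexed quotient $\cZ$ of $Y$ whose vertices are the classes $[\B_{\maxNN}(u)]$ for $u\in Y$ and whose indices count adjacencies, and then to extend $\phi|_Y$ sphere by sphere to a coloring of $\cT$ whose quotient graph is $\cZ$. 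The nontrivial input is the well-definedness of those indices: the special ball $S_{\maxNN}$ \emph{does} occur with center in $Y$, and one must show that all such occurrences extend to the same $(\maxNN+1)$-ball (Lemma~\ref{Lem:OnY,UniquelyExtended}, using that every vertex of $X-G$ has maximal type greater than $\maxNN$). Your observation that balls centered deep in $C$ are non-special is correct but does not cover this boundary radius, which is exactly where the finite quotient is built.

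For $(2)\Rightarrow(1)$, the inference from ``splittings occur only at the interface'' to ``exactly one splitting of multiplicity one at each large scale'' does not go through: the set of vertex classes within distance $n$ of $G$ grows linearly in $n$, so this localization bounds neither the number of special $n$-balls nor the increment $b_\phi(n+1)-b_\phi(n)$ by $1$. The working assembly is a global linear upper bound: the paper recolors the vertices of $G$ with $|VG|$ fresh letters to get an auxiliary coloring $\psi$ with $b_\phi(n)\le b_\psi(n)=n+|\mathcal{A}|+|VG|$, after which strict monotonicity of $b_\phi$ forces the sum of the excesses $b_\phi(n+1)-b_\phi(n)-1\ge 0$ to be finite, hence equal to $0$ for all large $n$. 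You have both ingredients but not this summation step. Finally, your fallback for bounded type --- that the core $G$ ``rules out the unbounded-type graphs'' --- fails when $G$ is a single vertex, since a ray minus one vertex is still a ray; a correct argument is that an unbounded-type quasi-Sturmian coloring is uniformly recurrent (Proposition~\ref{unbdd_unif}), so arbitrarily large special balls would occur entirely inside $Y$, contradicting the bounded complexity of its periodic extension.
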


In the last part of the article, we obtain some bounds for the recurrence functions of quasi-Sturmian colorings. 
There are three versions of recurrence functions for words (see \cite{AS}, \cite{Ca}).
In this paper, we mainly consider one of the recurrence functions $R''$ which is defined by Cassaigne:
for infinite words, let 
$R''_\bu (n)$ be the length of the smallest factor of $\mathbf u$ containing all factors of length $n$.
It is clear that $R''_\bu(n)\ge p_\bu(n)+n-1$.
If $R''_\bu(n)=p_\bu(n)+n-1$ for each $n\ge 0$, then we say that $\bu$ has \emph{grouped factors}.
Cassaigne proved that quasi-Sturmian words have \emph{ultimately grouped factors}, i.e., there is $n_0$ such that $R''_\bu (n)=p_\bu+n-1$ for all $n\ge n_0$ (see \cite{Ca}). 

We define the recurrence function $R''(n)$ for colorings of trees analogously.
If a quasi-Sturmian coloring $\phi$ is of unbounded type, the factor graph $\G_n$ is of type (I) on $n=n_k$ in Theorem~\ref{Thm:Main2} (see Theorem~\ref {Thm:AnIffConditionOfTheQuotientGraphOfBoundedType}).
Denote by  {$\cZ=(Z, i_Z)$} the finite quotient graph of $\cT-\widetilde{G}$ with respect to the coloring $\phi$ in Theorem ~\ref{Thm:Main3}.

For a finite graph $G$ and a vertex $x$ of $G$, let us denote by 
$$
\mathrm{\radiusr}(x,G)=\max\{\metricd(x,y):y\in VG\}.
$$


\begin{thm}[Bounds of $R''(n)$]\label{Thm:Main4}
Let $(\cT,\phi)$ be a quasi-Sturmian coloring and $\cX=(X,i)$ be the quotient graph of $(\cT,\phi)$.

\begin{enumerate}
\item Let $\phi$ be of unbounded type.
Then, we have $$R''(n) = n+\Bigl\lfloor\frac{b(n_k)}{2}\Bigr\rfloor \quad \mathrm{for} \;\; n_{k-1}<n \le n_k.$$.

\item Let $\phi$ be of bounded type. Let $\xNone$ be the vertex of maximal type $\maxNN$. 

\begin{enumerate}
\item If $Z$ is acyclic, then we have $$R''(n) = n+\Bigl\lfloor\frac{1}{2}(b(n_k)-|G|+\radiusr(x\!_{_{\maxNNn}}\!,G)+1)\Bigr\rfloor \quad \mathrm{for} \;\; n_{k-1}<n \le n_k.$$

\item If $Z$ is cyclic, then we have $$R''(n) = n+\Bigl\lfloor\frac{1}{2}(b(n)-|G|+\radiusr(x\!_{_{\maxNNn}}\!,G)+1)\Bigr\rfloor \quad \mathrm{for} \;\; n \ge \maxNN.$$ 
\end{enumerate}
\end{enumerate}

\end{thm}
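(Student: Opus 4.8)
The plan is to read $R''(n)$ as a covering radius on the factor graph. Write $\pi_n\colon V\cT\to V\G_n$, $\pi_n(u)=[\B_n(u)]$, and observe that a ball $\B_R(v)$ realizes every colored $n$-ball exactly when $\pi_n(\B_{R-n}(v))=V\G_n$; thus $R''(n)=n+r(n)$, where $r(n)$ is the least radius of a metric ball in $\cT$ whose $\pi_n$-image exhausts $\G_n$. Since neighbouring vertices of $\cT$ map to neighbouring (or equal) vertices of $\G_n$, the set $\pi_n(\B_{r}(v))$ consists of the vertices of $\G_n$ reachable by the projection of a $\cT$-geodesic of length $\le r$ issuing from $v$; the point is that, unlike abstract walks in $\G_n$, these projections are constrained by the tree, so $r(n)$ is in general strictly larger than the graph-radius of $\G_n$. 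I would first record the monotonicity $R''(n)\le R''(n+1)$ (a ball covering all $(n+1)$-balls covers all $n$-balls, as every $n$-ball is a restriction of an $(n+1)$-ball with the same centre), which lets me propagate equalities across an interval once one endpoint is understood.

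Next comes the base computation at a type-(I) index. By Theorem~\ref{Thm:Main2}, $\G_{n_k}$ is linear with $b(n_k)$ vertices and two extreme (degree-$1$) vertices $A_{n_k},B_{n_k}$. For the lower bound, any geodesic in $\cT$ joining an $A_{n_k}$-centre to a $B_{n_k}$-centre projects to a walk sweeping the whole path, hence has length $\ge b(n_k)-1$; so the two centres cannot lie in a common ball of radius $<\lceil(b(n_k)-1)/2\rceil=\lfloor b(n_k)/2\rfloor$, giving $r(n_k)\ge\lfloor b(n_k)/2\rfloor$. For the matching upper bound I would place $v$ at a vertex whose $n_k$-ball is the middle vertex of the path and, using the ray/line description of the quotient in Theorem~\ref{Thm:Main1}, run two monotone geodesics of length $\lfloor b(n_k)/2\rfloor$ into distinct subtrees of $v$ to reach both ends; the line structure of $\cX$ is exactly what makes these monotone lifts exist. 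This yields $R''(n_k)=n_k+\lfloor b(n_k)/2\rfloor$.

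To fill the interval $n_{k-1}<n<n_k$ I would show $r(n+1)=r(n)+1$ along the evolution (I)$\to$(II)$\to\cdots\to$(III)$\to$(I) of Theorem~\ref{Thm:Main2}. The bound $r(n+1)\le r(n)+1$ is the easy half: refining every covered $n$-ball by one extra layer costs one more step. The substance is the strict increase $r(n+1)\ge r(n)+1$: when the special ball $S_n$ splits, its two refinements sit at the frontier of any optimal covering ball, which one reads off from the position of $S_n$ relative to $A_n,B_n,C_n$ in the three cases of Theorem~\ref{Thm:Main2}, so one genuinely new step is forced. Combined with the base value at $n_k$ and monotonicity, this gives $R''(n)=n+\lfloor b(n_k)/2\rfloor$ throughout $(n_{k-1},n_k]$, proving part~(1). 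I expect this strict-increase step to be the main obstacle, since it is where the tree geometry, rather than the abstract graph $\G_n$, enters and must be controlled uniformly across the type-(II) plateau.

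For part~(2) I would localise the previous analysis near the finite core. By Theorem~\ref{Thm:Main3} the quotient $\cX$ is a finite connected graph $G$ glued to an infinite ray whose lifts carry a periodic coloring with finite quotient $\cZ=(Z,i_Z)$. The colored $n$-balls then split into those supported on the ray part, which organise into a one-ended linear piece as before, and the at most $|G|$ balls meeting $\widetilde{G}$, which cluster geometrically within radius $\radiusr(\xNone,G)$ of the attaching vertex $\xNone$ of maximal type $\maxNN$ rather than elongating the line. Optimising the centre to balance the reach along the ray against the reach into the core replaces the effective span $b(n_k)-1$ of the unbounded case by $b(n_k)-|G|+\radiusr(\xNone,G)+1$, whence the floor formula in (2a); the correction $-|G|+\radiusr(\xNone,G)+1$ is precisely the bookkeeping that converts the vertex-count of the core into its geometric radius. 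Finally I would separate the two regimes by the shape of $Z$: when $Z$ is acyclic the ray part still produces the interval plateaus, so the value is frozen at the type-(I) index and $b(n_k)$ appears; when $Z$ is cyclic the ray coloring is genuinely periodic, the plateau phenomenon disappears, and $r(n)$ tracks $b(n)$ directly for every $n\ge\maxNN$, giving (2b). The delicate points here are identifying the correct attaching vertex and verifying that the core contributes through $\radiusr(\xNone,G)$ and not through its full diameter.
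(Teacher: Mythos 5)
Your reading of $R''(n)$ as $n$ plus a covering radius, and your treatment of the base case $n=n_k$ (lower bound from the distance between the two degree-one endpoints of the linear graph $\G_{n_k}$, upper bound from a path in $\cT$ realizing all $b(n_k)$ classes, centered at its midpoint), is essentially the paper's argument in Proposition~\ref{unbdd_ineq}(1) at $n=n_k$. The genuine gap is in how you fill the interval $n_{k-1}<n<n_k$. First, the recursion you propose, $r(n+1)=r(n)+1$, is inconsistent with the statement you are proving: since $R''(n)=n+r(n)$, it would force $R''$ to increase by $2$ at each step, whereas the theorem asserts that $r(n)=\bigl\lfloor b(n_k)/2\bigr\rfloor$ is \emph{constant} on $(n_{k-1},n_k]$; what you actually need is $r(n+1)=r(n)$, i.e.\ $R''(n+1)=R''(n)+1$. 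Second, even after this correction, the half you call easy, $R''(n+1)\le R''(n)+1$, is not easy and your justification fails at exactly the interesting point: adding one layer to a ball whose $\pi_n$-image is all of $\G_n$ upgrades every \emph{non-special} $n$-ball to its unique extension, but a center of $S_n$ realizes only one of the two extensions $A_{n+1},B_{n+1}$, and nothing in your argument guarantees that a center realizing the other extension lies within one extra step. The paper avoids the induction altogether: the single path of length $b(n_k)-1$ through centers of all $n_k$-classes gives the upper bound $R''(n)\le n+\lfloor b(n_k)/2\rfloor$ simultaneously for every $n$ with $n_{k-1}<n\le n_k$ (each $n$-ball being the restriction of an $n_k$-ball), and for the lower bound it exhibits, for each such $n$ and for each of Cases (II)/(III) of Theorem~\ref{Thm:Main2}, two classes $D_n,E_n$ whose centers in $\cT$ must be at distance at least $b(n_k)-1$.

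For part (2) your outline is directionally consistent with the paper (the same path argument with the finite core $G$ contributing $\radiusr(\xNone,G)$ rather than $|G|-1$ to the span), but as written it is a heuristic, not a proof: you do not establish why the core is swept within radius $\radiusr(\xNone,G)$ of the attaching vertex rather than within its diameter, nor why in the cyclic case the relevant span is $b(n)-|G|$ for every $n\ge\maxNN$ instead of being frozen at the value $b(n_k)$. These are precisely the computations the paper carries out via the isomorphic copies $G'$ of $G$ inside $\G_{n_k}$ and $G''$ inside $\cT$, and via the path $[A_n\cdots C_nS_nB_n\cdots[\B_n(\tildexNone)]]$ in the cyclic case.
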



The article is organized as follows. In Section~\ref{sec_Quotient graphs of quasi-Sturmian colorings}, we characterize quotient graphs of quasi-Sturmian colorings. Theorem~\ref{Thm:Main1} is proved in
Proposition~\ref{Prop:TheQuotientGraphOfBoundedType} and Proposition~\ref{Prop:ForUnboundedType,TheNeighboringVerticesHaveAtMost3TypeSets}.
In Section~\ref{Sec_CF}, we study the factor graphs of quasi-Sturmian colorings.
We prove Theorem~\ref{Thm:Main2} in Proposition~\ref{graph_develop}.
We prove Theorem~\ref{Thm:Main3} in Theorem~\ref {Thm:AnIffConditionOfTheQuotientGraphOfBoundedType}.
In Section~\ref{Sec_recurrence}, we investigate the recurrence function of quasi-Sturmian colorings.
\section{Quotient graphs of quasi-Sturmian colorings}\label{sec_Quotient graphs of quasi-Sturmian colorings}

In this section, we characterize the quotient graphs of quasi-Sturmian colorings.
The quotient graph of 
 a quasi-Sturmian coloring of bounded type is a union of a finite graph and a geodesic ray.
For a quasi-Sturmian coloring of unbounded type, the quotient graph is a geodesic ray or an infinite geodesic.

Recall that for a vertex $u$ of bounded type, the maximal type $\tau(u)$ of $u$ is the maximum of elements in $\Lambda_u$. If a vertex of a coloring of a tree is of bounded type, then every vertex is of bounded type (see Lemma 2.15 in \cite{KL}).

\subsection{Quotient graphs of quasi-Sturmian colorings of bounded type}
For $u\in V\cT$, $\tau(u)\leq m$ if and only if $[\B_{m+1}(u)]=[\B_{m+1}(v)]$ implies that $u$ and $v$ are in the same class. 
If two vertices $u$ and $v$ are in the same class, then $u$ and $v$ have the same maximal type.
Kim and Lim proved that the converse is also true in the case of a Sturmian coloring (see Proposition 3.2 in \cite{KL}). We observe that the same proof holds in quasi-Sturmian colorings as long as $b(n+1)-b(n)=1$. We provide the proof for completeness.

We say that two vertices $u,v$ are \emph{in the same class} if there is a color-preserving isometry of $\cT$ such that $f(u) = v$. Note that two vertices are in the same class if the $n$-balls $\mathcal{B}_n(u)$, $\mathcal{B}_n(v)$ are equivalent for every $n$.
\begin{lem}
Suppose that $b(n)$ is a strictly increasing function. 
If $b(n+1)-b(n)=1$ and two vertices $u$ and $v$ have maximal type $n$, then $u$ and $v$ are in the same class.
\end{lem}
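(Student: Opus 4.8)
The plan is to reduce the statement to the single equality $[\B_{n+1}(u)]=[\B_{n+1}(v)]$ and then invoke the criterion recalled just above the lemma. The hypothesis $b(n+1)-b(n)=1$ guarantees a unique special $n$-ball $S_n$. Since $\tau(u)=\tau(v)=n$, both $[\B_n(u)]$ and $[\B_n(v)]$ are special, hence both equal $S_n$; in particular $[\B_n(u)]=[\B_n(v)]$ and there is a color-preserving isometry $f\colon\B_n(u)\to\B_n(v)$. Because $\tau(u)\le n$, the stated criterion ($\tau(u)\le n$ iff $[\B_{n+1}(u)]=[\B_{n+1}(w)]$ forces $u,w$ into the same class) shows it suffices to prove $[\B_{n+1}(u)]=[\B_{n+1}(v)]$. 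This is in fact equivalent to the conclusion: for $m>n$ the balls $[\B_m(u)]$ are non-special and therefore extend uniquely, so equality at radius $n+1$ propagates to all larger radii, while equality at radii $\le n$ holds by restriction, whence $[\B_m(u)]=[\B_m(v)]$ for all $m$ and $u,v$ lie in the same class.

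The crux is therefore to show that $u$ and $v$ select the same extension of $S_n$. As $S_n$ is special and $b(n+1)-b(n)=1$, it has exactly two colored $(n+1)$-ball extensions $T_1,T_2$, and $[\B_{n+1}(u)],[\B_{n+1}(v)]\in\{T_1,T_2\}$; moreover both are \emph{non-special}, since $n+1\notin\Lambda_u\cup\Lambda_v$. I would argue by contradiction: assume $[\B_{n+1}(u)]=T_1\neq T_2=[\B_{n+1}(v)]$. Then both children of $S_n$ are non-special and each is realized by a vertex of maximal type exactly $n$, that is, by a vertex whose balls remain non-special and extend uniquely at every larger radius. The whole point is to rule this out, i.e. to show that the two maximal-type-$n$ classes cannot emanate from the two different children of $S_n$.

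To attack this I would localize the discrepancy between $T_1$ and $T_2$. Both restrict to $S_n$, so the colored balls agree on $\B_n$ and differ only on the sphere of radius $n+1$; transporting by $f$, there is a boundary vertex $x\in\partial\B_n(u)$, with $x'=f(x)$, at which the multiset of colors of the outgoing neighbors differs. Re-centering at the vertex $z_i$ at distance $i$ from $u$ along the geodesic toward $x$, the restriction of $f$ yields $[\B_{n-i}(z_i)]=[\B_{n-i}(f(z_i))]$ while the discrepancy now sits on the sphere of radius $n-i+1$. If at each step the two balls genuinely differ at radius $n-i+1$, then $[\B_{n-i}(z_i)]$ is special and hence equals $S_{n-i}$ by uniqueness, giving a controlled descent of the special ball toward $x$ (bottoming out at the clean observation that the color of $x$ is itself a special $0$-ball, as $[\B_0(x)]=[\B_0(x')]$ but $[\B_1(x)]\neq[\B_1(x')]$). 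The aim is to combine this descent with the uniqueness of $S_m$ at each radius to contradict the simultaneous tameness of $u$ and $v$.

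The main obstacle is precisely the assertion that the re-centered balls differ at the expected radius. Equivalence of colored balls is defined through the existence of \emph{some} color-preserving isometry, which need not respect $f$ nor send $x$ to $x'$; a priori the symmetries of $\B_{n-i+1}(z_i)$ could absorb the discrepancy, so the descent cannot be asserted naively. Overcoming this requires exploiting the uniqueness of the special ball $S_{n-i}$ to pin down the admissible isometries at each radius, and this is exactly where the hypothesis $b(n+1)-b(n)=1$—uniqueness of the special ball, not merely strict monotonicity of $b$—is essential. I expect this step to absorb essentially all of the work, and it is the point at which one follows the argument of Proposition 3.2 in \cite{KL}.
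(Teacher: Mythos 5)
Your opening reduction is sound: you correctly note that it suffices to prove $[\B_{n+1}(u)]=[\B_{n+1}(v)]$, and that if $u$ and $v$ were not in the same class then $[\B_{n+1}(u)]$ and $[\B_{n+1}(v)]$ would have to be the two distinct extensions of the unique special ball $S_n$. But the core of the argument---ruling out that configuration---is not actually carried out. The descent you propose is one you yourself flag as broken (the symmetries of a colored ball may absorb the discrepancy, so you cannot assert that the re-centered balls remain inequivalent at the expected radius), and, more seriously, it points in an unproductive direction: even if every re-centered ball $[\B_{n-i}(z_i)]$ were shown to equal $S_{n-i}$, that conclusion is perfectly consistent and produces no contradiction; you never identify what the ``simultaneous tameness of $u$ and $v$'' would actually contradict. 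Deferring the remaining work to ``the argument of Proposition 3.2 in \cite{KL}'' does not close the gap either, since this lemma \emph{is} the quasi-Sturmian version of that proposition and the whole point is to reproduce its argument.

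The missing idea is a global covering argument that genuinely uses the hypothesis that $b(n)$ is strictly increasing---a hypothesis your proposal never invokes. Because the alphabet is finite, there is a uniform $N$ such that every ball $\B_N(w)$ contains the center $z$ of a special $n$-ball (Lemma 2.16 of \cite{KL}). By uniqueness of $S_n$, the ball $[\B_{n+1}(z)]$ is one of the two extensions $[\B_{n+1}(u)]$, $[\B_{n+1}(v)]$, and since $\tau(u)=\tau(v)=n$ the criterion recalled before the lemma forces $z$ into the class of $u$ or of $v$. Hence every vertex of $\cT$ lies in $\B_N(z)$ for some $z$ in the class of $u$ or $v$, so every maximal type is bounded by $M=\max\{\tau(p):p\in\B_N(u)\cup\B_N(v)\}$, which is finite because every vertex is of bounded type once one is. But then no special $m$-ball exists for $m>M$, so $b(m+1)=b(m)$ for such $m$, contradicting strict monotonicity. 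This step, which is where the hypothesis on $b$ enters and where the proof actually closes, is entirely absent from your proposal.
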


\begin{proof}
Suppose that $b(n+1)-b(n)=1$ and there exist two vertices $u$ and $v$ not in the same class such that $\tau(u)=\tau(v)=n$. Since the alphabet $\mathcal{A}$ is finite, there is a number $N$ such that $\B_{N}(w)$ contains a special $n$-ball for each $w\in V\cT$ (see Lemma 2.16 in \cite{KL}).

Fix a vertex $w$ and let $z$ be the center of a special $n$-ball contained in $\B_{N}(w)$.
Since the special $n$-ball is unique, either $[\B_{n+1}(z)]=[\B_{n+1}(u)]$ or $[\B_{n+1}(z)]=[\B_{n+1}(v)]$, thus $z$ is in the same class of $u$ or $v$.
Since $w\in \B_{N}(z)$, the tree $\cT$ is covered by $N$-balls whose centers are in the same class of $u$ or $v$. 
Thus, the maximal types of vertices of $\cT$ is bounded by $M=\max\{\tau(p):p\in\B_{N}(u)\cup\B_{N}(v)\}$.
It contradicts that $b(n)$ is strictly increasing.
\end{proof}

\begin{cor}\label{Cor:TheSameMaximalTypeImpliesTheSameClass}
Let $(\cT,\phi)$ be a quasi-Sturmian coloring of bounded type with factor complexity $b(n)=n+c$ for $n\ge \initN$.
If two vertices u and v of $(\cT,\phi)$ have the same maximal type greater than or equal to $\initN$, then u and v are in the same class.
\end{cor}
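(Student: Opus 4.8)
The plan is to obtain this as a direct instantiation of the preceding Lemma, the only real work being to confirm that all three of its hypotheses hold at the common maximal type. First I would set $n=\tau(u)=\tau(v)$. This is legitimate because $\phi$ is assumed to be of bounded type, so every vertex has a finite type set and hence a well-defined maximal type, and by hypothesis $n\ge\initN$.

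Next I would verify the two global hypotheses on the complexity function. Since $b(n)=n+c$ for all $n\ge\initN$, the factor complexity is unbounded, so by the dichotomy recalled after Definition~\ref{Def:Quasi-Sturmian} (the factor complexity is either bounded or strictly increasing; Theorem~2.7 in \cite{KL}) it must be strictly increasing. Then I would check the gap condition precisely at the index equal to the common maximal type: because $n\ge\initN$ we have both $b(n)=n+c$ and $b(n+1)=n+1+c$, so $b(n+1)-b(n)=1$. This last verification is the one point that genuinely uses the assumption that the maximal type is at least $\initN$, and it is where I expect the only (minor) obstacle to lie: the Lemma's conclusion relies on the gap being exactly one at the specific value $n=\tau(u)$, and that equality is guaranteed only once the complexity has stabilized to $n+c$, i.e. for $n\ge\initN$. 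Below that threshold the difference $b(n+1)-b(n)$ need not equal $1$, so the argument would break down.

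Having established that $b$ is strictly increasing, that $b(n+1)-b(n)=1$, and that $u$ and $v$ both have maximal type $n$, I would invoke the preceding Lemma verbatim to conclude that $u$ and $v$ are in the same class, i.e. that there is a color-preserving isometry of $\cT$ carrying $u$ to $v$. No further estimates or constructions are needed, since the substantive content—covering $\cT$ by $N$-balls centered at vertices equivalent to $u$ or $v$ and deriving a contradiction with strict monotonicity of $b$—has already been carried out in the Lemma.
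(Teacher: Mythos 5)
Your proposal is correct and matches the paper's intent: the Corollary is stated as an immediate instantiation of the preceding Lemma, and your verification that $b$ is strictly increasing (quasi-Sturmian implies unbounded complexity, hence strictly increasing by the dichotomy) and that $b(n+1)-b(n)=1$ at $n=\tau(u)=\tau(v)\ge\initN$ is exactly what is needed. The paper offers no separate proof, so there is nothing further to compare.
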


\begin{lem}\label{Lem:TheDiffereceOfMaximalTypesBetweenAdjacentVerticesIsLessThan2} 
If a vertex $u$ of a quasi-Sturmian coloring $(\cT,\phi)$ is of maximal type $m$, then the following hold.
\begin{enumerate}
\item\label{Lem:AdjVer(1)} If $m\ge \initN$, its neighboring vertices are of maximal type $m-1$, $m$, $m+1$.\\
 If $m= \initN-1$, its neighboring vertices are of maximal type at most $\initN$.\\
 If $m\le \initN-2$, its neighboring vertices are of maximal type at most $\initN-1$.
\item\label{Lem:AdjVer(2)} If $m\ge \initN$, one of its neighboring vertices is of maximal type $m+1$.
\item\label{Lem:AdjVer(3)} If $m\ge \initN$ is not minimum among maximal types of vertices, one of its neighboring vertices is of maximal type $m-1$.
\end{enumerate}
\end{lem}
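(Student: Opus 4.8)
The plan is to recast the maximal type entirely in terms of the unique special balls. By \eqref{uniquespecial}, for $n\ge\initN$ the restriction map $[\B_{n+1}(w)]\mapsto[\B_n(w)]$ is surjective and at most two-to-one, being two-to-one exactly over $S_n$; hence every non-special colored $n$-ball has a \emph{unique} extension to a colored $(n+1)$-ball, and $\tau(w)=\max\{n:[\B_n(w)]=S_n\}$ for each vertex $w$ of bounded type. I first reduce all of (1) to the single statement
(X): \emph{if $\tau(x)\ge\initN$, then every neighbor $y$ of $x$ satisfies $\tau(y)\ge\tau(x)-1$.}
Indeed, applying (X) to $x=u$ gives the lower bound $\tau(v)\ge m-1$; and if a neighbor had $\tau(v)=k\ge m+2$, then (X) applied to $x=v$ (valid since $k\ge\initN$) would force $\tau(u)\ge k-1\ge m+1$, contradicting $\tau(u)=m$ and so giving the upper bound $\tau(v)\le m+1$. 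The two boundary clauses are the same contradiction read off differently: by (X) a neighbor of type $\ge\initN$ forces $x$ to have type $\ge\initN-1$, so a vertex of type $\initN-1$ admits no neighbor of type $\ge\initN+1$ (neighbors $\le\initN$) and a vertex of type $\le\initN-2$ admits no neighbor of type $\ge\initN$ (neighbors $\le\initN-1$).

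The tool for (X) is a passage to \emph{forward branches}. Writing $\B_n(x)$ as the union, over neighbors $z$ of $x$, of the rooted colored subtrees hanging off $z$ away from $x$, any color-isometry fixing the center identifies these branches direction-by-direction. Thus, when $[\B_k(x)]=S_k$ with partner $x'$ (equal $k$-balls but distinct $(k+1)$-balls), the center-fixing isometry $f\colon\B_k(x)\to\B_k(x')$ matches the radius-$(k-1)$ forward branch of every neighbor $y$ with that of $f(y)$, while the discrepancy detected at radius $k+1$ is concentrated in the radius-$k$ forward branches along one or more neighbor directions. In particular $[\B_{k-1}(y)]=[\B_{k-1}(f(y))]$ for every neighbor $y$.

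To establish (X), suppose toward a contradiction that a neighbor $y$ has $\tau(y)\le\tau(x)-2=k-2$. Then $[\B_j(y)]\ne S_j$ for all $j\ge k-1$, so each such ball is non-special and the unique-extension property propagates the equality $[\B_{k-1}(y)]=[\B_{k-1}(f(y))]$ upward: inductively $[\B_j(y)]=[\B_j(f(y))]$ for all $j\ge k-1$, hence $y$ and $f(y)$ lie in the same class. Since $[\B_k(x)]=S_k$ is the \emph{only} special $k$-ball, the branching separating $x$ from $x'$ is one–dimensional, so all neighbor futures except along a single branching direction are rigidified in this way; reconstructing $\cT$ from the finitely many ball–classes occurring around $x$ and $x'$ then covers $\cT$ by balls centered in finitely many classes, which (via Corollary~\ref{Cor:TheSameMaximalTypeImpliesTheSameClass}, identifying equal maximal type $\ge\initN$ with the same class) bounds the maximal types globally and contradicts the strict growth of $b(n)=n+c$ — exactly the covering/counting contradiction used in the lemma preceding Corollary~\ref{Cor:TheSameMaximalTypeImpliesTheSameClass}. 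For (2) and (3) I record the complementary propagation: specialness of $[\B_{m}(u)]=S_m$ pushes $S_{m+1}$ onto a neighbor (outward along the branching direction), and, running the special chain downward, an $S_m$-center meets an $S_{m-1}$-center whenever $m$ is not the minimal maximal type so that $S_{m-1}$ is realized. Granting (1), the neighbor realizing this specialness has type pinned to $m+1$, respectively $m-1$, and Corollary~\ref{Cor:TheSameMaximalTypeImpliesTheSameClass} makes ``type $m$ adjacent to type $m\pm1$'' a relation readable in both directions.

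The main obstacle is precisely the \emph{direction control} inside (X): colored-ball equivalence is unrooted, so a priori the branching of $S_k$ could sit in any forward branch and a single low-type neighbor produces, by itself, no contradiction. What breaks the impasse is the combination of the center-fixing pairing of forward branches with the fact that \eqref{uniquespecial} leaves only one special ball per radius, which confines the branching to a single direction; the global covering argument then converts a hypothetical gap of two or more into a uniform bound on maximal types. The remaining work is the bookkeeping around the threshold $\initN$, where uniqueness of $S_n$ is available only for $n\ge\initN$ and the minimality of $\initN$ must be invoked to cap neighbor types at $\initN-1$ or $\initN$.
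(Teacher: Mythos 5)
Your reduction of part (1) to the symmetric statement (X) is clean and correct, and so is the propagation step: if $\tau(y)\le k-2$ then $[\B_j(y)]$ is non-special for all $j\ge k-1$, so $[\B_{k-1}(y)]=[\B_{k-1}(f(y))]$ extends uniquely level by level and $y$, $f(y)$ lie in the same class. The gap is that you never actually convert this into a contradiction. Concretely: a global color-preserving isometry $h$ with $h(y)=f(y)$ hands you a second neighbor $x''=h^{-1}(x')$ of $y$ with $[\B_k(x'')]=S_k$ and $[\B_{k+1}(x'')]\ne[\B_{k+1}(x)]$, so $\tau(x'')\ge k$ --- but nothing forces $\tau(x'')=k$, and in fact $\tau(x')>k$ is unavoidable (if $\tau(x')=k$ then Corollary~\ref{Cor:TheSameMaximalTypeImpliesTheSameClass} already makes $x\sim x'$, contradicting the defining property of the partner). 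So Corollary~\ref{Cor:TheSameMaximalTypeImpliesTheSameClass} does not apply, and the appeal to ``covering $\cT$ by balls centered in finitely many classes'' is unsupported: the classes of $x$ and $x'$ do not exhaust the centers of $S_k$ once types above $k$ are in play, and no analogue of the two-class covering from the lemma preceding Corollary~\ref{Cor:TheSameMaximalTypeImpliesTheSameClass} is available here. The paper closes exactly this hole by a different centering: it takes $\tau=\max_i\tau(u_i)$ over the neighbors of $u$, runs the partner argument at level $\tau$ centered at the maximal neighbor $u_k$, and uses $\B_{m+1}(u)\subset\B_\tau(u_k)$ (valid when $\tau>m+1$) to transport the partner to a neighbor $u_j$ of $u$ with $\tau(u_j)\ge\tau$; maximality then pins $\tau(u_j)=\tau$, and Corollary~\ref{Cor:TheSameMaximalTypeImpliesTheSameClass} forces $u_j\sim u_k$, contradicting their distinct $(\tau+1)$-balls. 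Your version is missing precisely this maximality device (or an equivalent terminating iteration over the finitely many neighbors of $y$).

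Parts (2) and (3) also rest on an unproved assertion: that an $S_m$-center ``pushes $S_{m+1}$ onto a neighbor.'' What is actually known (Lemma 2.11 of \cite{KL}) is the downward adjacency --- every center of $S_{m+1}$ has a neighbor which is a center of $S_m$ --- and the upward statement you use is essentially equivalent to (2) itself. The paper instead argues by contradiction with a propagation: if no neighbor of $u$ has maximal type $m+1$, then by (1) every neighbor has type $m-1$ or $m$; neighbors of type $m$ are in the class of $u$ by Corollary~\ref{Cor:TheSameMaximalTypeImpliesTheSameClass} and hence also have no neighbor of type $m+1$, so inductively no vertex of $\B_r(u)$ has type exceeding $m$ for any $r$, bounding all maximal types and contradicting the strict growth of $b(n)$. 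You would need to supply an argument of this kind (or prove the strong adjacency of $S_m$ to $S_{m+1}$ independently) for (2) and (3) to stand.
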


\begin{proof}
Let $\{u_i\}_{i=1,\cdots,d}$ be the neighboring vertices of $u$.

(1) Let $\tau=\max\{\tau(u_i)\}_{i=1,\cdots,d}$.
Choose $u_k$ such that $\tau(u_k)=\tau$.
There is a vertex $v$ such that $[\B_{\tau}(u_k)]=[\B_{\tau}(v)]$ but $[\B_{\tau+1}(u_k)]\not=[\B_{\tau+1}(v)]$. 
Let $f:\B_{\tau}(u_k) \rightarrow \B_{\tau}(v)$ be a color-preserving isometry.
Let $w=f(u)$.
Suppose that $\tau>m+1$.
Since $\B_{m+1}(u)\subset\B_{\tau}(u_k)$, $[\B_{m+1}(u)]=[\B_{m+1}(w)]$.
Thus, $u$ and $w$ are in the same class.
Since $\mathbf{d}(w,v)=1$, $u_j$ and $v$ are in the same class for some $j$.
We have 
$$
[\B_{\tau}(u_j)]=[\B_{\tau}(v)]=[\B_{\tau}(u_k)]\text{ and }[\B_{\tau+1}(u_j)]=[\B_{\tau+1}(v)]\not=[\B_{\tau+1}(u_k)],
$$
thus $\tau(u_j)\geq \tau$.
By the maximality of $\tau$, $\tau(u_j)=\tau$.
By Corollary~\ref{Cor:TheSameMaximalTypeImpliesTheSameClass}, if $\tau>\initN$, then $u_k$ and $u_j$ are in the same class.
It contradicts $[\B_{\tau+1}(u_k)]\not=[\B_{\tau+1}(u_j)]$.
Hence, $\tau<\initN$.

We conclude that $\tau>m+1$ implies $\tau<\initN$.
If $m\ge \initN-1$, then $\tau\le m+1$.
If $m<\initN-1$, then $\tau\le \initN-1$.
In other words, for $u,v$ such that $\mathbf{d}(u,v)$=1, if $|\tau(u)-\tau(v)|\ge 2$, then $\tau(u),\tau(v)\le \initN-1$.
Thus if $m\ge \initN$, then $\tau(u_i)\ge m-1$.

(2) Let $m\ge \initN$.
Suppose that there is no $u_i$ such that $\tau(u_i)=m+1$.
By (\ref{Lem:AdjVer(1)}), $m-1\le \tau(u_i) \le m$ for each $i$.
If $\tau(u_i)=m-1$, then there is no vertices on $\B_{1}(u_j)$ of maximal type greater than $m$.
Even if $\tau(u_i)=m$, since $u$ and $u_i$ are in the same class by Corollary~\ref{Cor:TheSameMaximalTypeImpliesTheSameClass}, we have the same conclusion.
Thus, there is no vertex on $\B_{2}(u)$ of maximal type greater than $m$.
Inductively, every vertex is of maximal type less than $m+1$.
It contradicts the fact that $b(n)$ is strictly increasing.

(3) We can show it by the similar argument of the proof of (\ref{Lem:AdjVer(2)}).
\end{proof}

For a quasi-Sturmian coloring of bounded type, we define 
\begin{equation}\label{Eq:N_1}
\maxNN = \max\{\initN,\textrm{ }\min\{\tau(x):x \in V\cT\}\}.
\end{equation}
For a coloring of bounded type, we define the subgraph $G$ of $X$ as the graph consisting of the vertices of maximal type less than or equal to $\maxNN$.
The next proposition follows from Corollary~\ref{Cor:TheSameMaximalTypeImpliesTheSameClass} and Lemma~\ref{Lem:TheDiffereceOfMaximalTypesBetweenAdjacentVerticesIsLessThan2}.

\begin{prop}\label{Prop:TheQuotientGraphOfBoundedType}
For the quotient graph $\cX=(X,i)$ of a quasi-Sturmian coloring $\phi$ of bounded type, the quotient  graph $X$ is a union of $G$ and a geodesic ray (see the following figure). 

\begin{figure}[h]
\begin{center}
\begin{tikzpicture}[every loop/.style={}]
  \tikzstyle{every node}=[inner sep=-1pt]
  \node(-1) at (-3.5, 1  ) {$\bullet$};
  \node(0)  at (-3.5, 0  ) {$\bullet$};
  \node(1)  at (-3   ,0.5) {$\bullet$};
  \node(2)  at (-2   ,0.5) {$\bullet$};
  \node(3)  at (-1   ,0.5) {$\bullet$};
  \node(4)  at ( 0   ,0.5) {$\bullet$};
  \node(5)  at ( 1   ,0.5) {$\bullet$};
  \node(6)  at ( 2   ,0.5) {$\bullet$};
  \node(7)  at ( 3   ,0.5) {$~.~.~.$};

  \node(8) at (-3.5,0.8){$\bullet$};
  \node(9) at (-3.5,0.5){$\vdots$};
  \node(10) at (-3.5,0.2){$\bullet$};

\draw[dotted] (-3,0.5) .. controls (-3.2,1) and (-2.8,1) .. (-3,0.5);
\draw[dotted] (-2,0.5) .. controls (-2.2,1) and (-1.8,1) .. (-2,0.5);
\draw[dotted] (-1,0.5) .. controls (-1.2,1) and (-0.8,1) .. (-1,0.5);
\draw[dotted] (0,0.5) .. controls (-0.2,1) and (0.2,1) .. (0,0.5);
\draw[dotted] (1,0.5) .. controls (0.8,1) and (1.2,1) .. (1,0.5);
\draw[dotted] (2,0.5) .. controls (1.8,1) and (2.2,1) .. (2,0.5);

  \node(101) at (-2.8,0.2) {$x\!_{_{\maxNNn}}$};
  \node(102) at (-1.9,0.2) {$x\!_{_{\maxNNn\!+\!1}}$};
  \node(103) at (-0.9,0.2) {$x\!_{_{\maxNNn\!+\!2}}$};
  \node(104) at (0.1,0.2) {$x\!_{_{\maxNNn\!+\!3}}$};
  \node(105) at (1.1,0.2) {$x\!_{_{\maxNNn\!+\!4}}$};
  \node(106) at (2.1,0.2) {$x\!_{_{\maxNNn\!+\!5}}$};

\tikzstyle{every loop}=   [-, shorten >=.5pt]

  \path[-] 
	(-1) edge (1)
	(0)  edge (1)
	(1)  edge (2)	 
	(2)  edge (3)	
	(3)  edge (4)
	(4)  edge (5)	
	(5)  edge (6)
	(6)  edge (7)
	(8)  edge (1)
	(10) edge (1);

\draw[dotted] (-4,1.2) rectangle (-2.5,-0.2);
\node at (-3.25, 1.5) {$G$};

\end{tikzpicture}
\end{center}
\caption{}\label{Figure:TheQuotientGraphOfBoundedType}
\end{figure}

The quotient graph $X$ is linear from the vertex of maximal type $\maxNN+1$.
In the figure, the vertex labeled by $x_k$ is of maximal type $k$.

\end{prop}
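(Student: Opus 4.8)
The plan is to identify the vertices of $X$ with the values of the maximal-type function and then read off the graph structure from Lemma~\ref{Lem:TheDiffereceOfMaximalTypesBetweenAdjacentVerticesIsLessThan2} together with Corollary~\ref{Cor:TheSameMaximalTypeImpliesTheSameClass}. Set $m_0=\min\{\tau(x):x\in V\cT\}$, so that $\maxNN=\max\{\initN,m_0\}$ and every integer $m>\maxNN$ satisfies both $m\ge\initN$ and $m>m_0$. Since $\maxNN\ge\initN$, Corollary~\ref{Cor:TheSameMaximalTypeImpliesTheSameClass} tells us that any two vertices of maximal type $m\ge\maxNN$ lie in the same class; hence there is at most one vertex $x_m\in VX$ of maximal type $m$ for each such $m$, and these are the vertices that I claim form the ray. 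The proposition then reduces to three assertions: (i) every integer $m\ge\maxNN$ is realized as a maximal type, so the distinct vertices $x_m$ exist; (ii) for $m\ge\maxNN+1$ the vertex $x_m$ is adjacent in $X$ only to $x_{m-1}$ and $x_{m+1}$ (apart from a possible loop); and (iii) the remaining vertices, those of maximal type $\le\maxNN$, form the finite graph $G$, to which the ray attaches only at $x_{\maxNN}$.

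For (i) I would first note that $b(n)$ is strictly increasing, so a special $n$-ball exists for every $n\ge\initN$; its center has maximal type at least $n$, and therefore the maximal types are unbounded above. Starting from a vertex of some large type $M$ and applying Lemma~\ref{Lem:TheDiffereceOfMaximalTypesBetweenAdjacentVerticesIsLessThan2}\,(3) repeatedly, I descend by one at each step---this is legitimate as long as the current type lies in $(\,\maxNN,M\,]$, where it is both $\ge\initN$ and strictly above the minimum $m_0$---thereby realizing every integer in $[\maxNN,M]$. Letting $M\to\infty$ realizes all of $[\maxNN,\infty)$, and Corollary~\ref{Cor:TheSameMaximalTypeImpliesTheSameClass} makes each such type a single class, giving the well-defined distinct vertices $\{x_m\}_{m\ge\maxNN}$.

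For (ii) I fix $m\ge\maxNN+1$ and a lift $\tilde x_m\in V\cT$. Because $m\ge\initN$, Lemma~\ref{Lem:TheDiffereceOfMaximalTypesBetweenAdjacentVerticesIsLessThan2}\,(1) forces every neighbour of $\tilde x_m$ to have maximal type in $\{m-1,m,m+1\}$, and by Corollary~\ref{Cor:TheSameMaximalTypeImpliesTheSameClass} such neighbours lie in the classes $x_{m-1}$, $x_m$, $x_{m+1}$. Thus the only edges of $X$ incident to $x_m$ run to $x_{m-1}$, to $x_{m+1}$, or back to $x_m$; Lemma~\ref{Lem:TheDiffereceOfMaximalTypesBetweenAdjacentVerticesIsLessThan2}\,(2) supplies a type-$(m+1)$ neighbour and Lemma~\ref{Lem:TheDiffereceOfMaximalTypesBetweenAdjacentVerticesIsLessThan2}\,(3) (using $m>m_0$) a type-$(m-1)$ neighbour, so both edges $(x_m,x_{m-1})$ and $(x_m,x_{m+1})$ are genuinely present. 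Consequently the subgraph on $\{x_m:m\ge\maxNN\}$ is exactly the infinite path $x_{\maxNN}-x_{\maxNN+1}-x_{\maxNN+2}-\cdots$ (with possible loops), a geodesic ray that is linear from $x_{\maxNN+1}$ onward; only $x_{\maxNN}$ can carry extra edges, namely to its type-$(\maxNN-1)$ and type-$\maxNN$ neighbours, which lie in $G$.

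Finally, for (iii) and the main difficulty, I would bound $G$ using the characterization recorded just before the first lemma: if $\tau(u)\le\maxNN$ then $[\B_{\maxNN+1}(u)]$ already determines the class of $u$, so the classes of maximal type $\le\maxNN$ inject into $\mathbb{B}_\phi(\maxNN+1)$ and $|VG|\le b(\maxNN+1)<\infty$; since $\cT$ is regular, $X$ is locally finite and $G$ is a finite graph. As $\cT$ and hence $X$ are connected and the ray meets the rest of $X$ only at $x_{\maxNN}$, the complement $G$ is connected and $X=G\cup\{x_m:m>\maxNN\}$ is the asserted union, with $x_k$ of maximal type $k$. I expect the genuine obstacle to be (i) together with this finiteness bound: one must juggle the three regimes of Lemma~\ref{Lem:TheDiffereceOfMaximalTypesBetweenAdjacentVerticesIsLessThan2}\,(1) and the non-minimality hypothesis of Lemma~\ref{Lem:TheDiffereceOfMaximalTypesBetweenAdjacentVerticesIsLessThan2}\,(3) to be sure that the realized types form an unbroken ray $[\maxNN,\infty)$ and that nothing outside $G$ escapes the bound $b(\maxNN+1)$; the purely local adjacency count in (ii) is then routine once Corollary~\ref{Cor:TheSameMaximalTypeImpliesTheSameClass} is in hand.
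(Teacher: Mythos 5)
Your proposal is correct and takes essentially the same route as the paper, which gives no separate proof but states that the proposition follows from Corollary~\ref{Cor:TheSameMaximalTypeImpliesTheSameClass} and Lemma~\ref{Lem:TheDiffereceOfMaximalTypesBetweenAdjacentVerticesIsLessThan2}; you have simply filled in the details (uniqueness of the class $x_m$ for each type $m\ge\maxNN$, realization of every such type via Lemma~\ref{Lem:TheDiffereceOfMaximalTypesBetweenAdjacentVerticesIsLessThan2}\,(2)--(3), the local adjacency count from part (1), and finiteness of $G$ via the injection into $\mathbb{B}_\phi(\maxNN+1)$). All of these steps check out against the statements proved earlier in Section~\ref{sec_Quotient graphs of quasi-Sturmian colorings}.
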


In the rest of the section, we provide examples of quasi-Sturmian colorings.
By Theorem~\ref{Thm:AnIffConditionOfTheQuotientGraphOfBoundedType}, the following examples are quasi-Sturmian colorings.

\begin{exam}[quasi-Sturmian coloring which is not a Sturmian coloring] 
Let $c\ge 3$ and $\mathcal{A}=\{a_1, \cdots, a_c\}$. Consider a coloring whose quotient graph is as follows.

\begin{center}
\begin{tikzpicture}[every loop/.style={}]
  \tikzstyle{every node}=[inner sep=-1pt]

  \node(0) at (-6,1) {$\cX:$};
  \node(1) at (-5,1) {$\bullet$};
  \node(2) at (-4,1) {$\bullet$};
  \node(4) at (-2,1) {$\bullet$};
  \node (5) at (-1,1) {$\bullet$};
  \node (6) at  (0,1) {$\bullet$};
  \node (7) at  (1,1) {$\bullet$};
  \node (8) at (2,1) {$~.~.~.$};

  \node(11) at (-5,0.7) {$a_1$};
  \node(12) at (-4,0.7) {$a_2$};
  \node(13) at (-3,0.7) {$.~.~.$};
  \node(14) at (-2,0.7) {$a_{c-1}$};
  \node (15) at (-1,0.7) {$a_c$};
  \node (16) at  (0,0.7) {$a_c$};
  \node (17) at  (1,0.7) {$a_c$};
  \node (18) at (1.5,0.7) {$~.~.~.$};

  \node(101) at (-4.8,1.2) {$3$};
  \node(102) at (-3.8,1.2) {$2$};
  \node(104) at (-1.8,1.2) {$2$};
  \node (105) at (-0.8,1.2) {$2$};
  \node (106) at  (0.2,1.2) {$2$};
  \node (107) at  (1.2,1.2) {$2$};

  \node(102) at (-4.2,1.2) {$1$};
  \node(104) at (-2.2,1.2) {$1$};
  \node (105) at (-1.2,1.2) {$1$};
  \node (106) at  (-0.2,1.2) {$1$};
  \node (107) at  (0.8,1.2) {$1$};
  
  \node at (3.5,1) {$\cZ:$};
  \node at (4,1) {$\bullet$};
  \node at (4.2,1.3) {$3$};
  \node at (4,0.7) {$a_c$};
	\draw[dotted] (4,1) .. controls (3.8,1.5) and (4.2,1.5) .. (4,1);

\node at (-3.4, 1.7) {$G$};

\tikzstyle{every loop}=   [-, shorten >=.5pt]

  \path[-] 
      (1)  edge  (2)	 
      (2)  edge  (4)	
      (4)  edge  (5)	
      (5)  edge  (6)
	(6)  edge (7)
	(7)  edge (8);

\draw[dotted] (-5.5,1.5) rectangle (-0.7,0.5);
\end{tikzpicture}
\end{center}

It has factor complexity 
\begin{equation*}
b(n)=\left\{\begin{array}{lll}1, &\mathrm{if} \emph{ } n=-1 \\
                                         n+c, & \mathrm{if}\emph{ } n\ge 0.
                     \end{array}\right.
\end{equation*}
$\initN=\maxNN=0$
\end{exam}

The alphabet $\mathcal{A}$ is $\{ \bullet, \circ$,  {\raisebox{0.39ex}{\boldotimes}}$\}$ for the following examples.

\begin{exam}[quasi-Sturmian coloring whose quotient graph is not a geodesic ray]
\begin{flushleft}
\end{flushleft}
\begin{center}
\begin{tikzpicture}[every loop/.style={}]
  \tikzstyle{every node}=[inner sep=-1pt]

  \node(-2) at (-6.5,1) {$\cX:$};
  \node(-1) at (-5.5, 1.7){$\circ$};
  \node(0) at (-5.5, 0.3){\boldotimes};
  \node(1) at (-5,1) {$\bullet$};
  \node(2) at (-4,1) {$\bullet$};
  \node(3) at (-3,1) {$\bullet$};
  \node(4) at (-2,1) {$\bullet$};
  \node (5) at (-1,1) {$\bullet$};
  \node (6) at  (0,1) {$\bullet$};
  \node (7) at  (1,1) {$\bullet$};
  \node (8) at (2,1) {$~.~.~.$};

  \node at (-5.3,1.7) {$3$};
  \node at (-5.3,0.3) {$3$};
  \node at (-5,1.3) {$1$};
  \node at (-5,0.7) {$1$};

  \node(101) at (-4.8,1.2) {$1$};
  \node(102) at (-3.8,1.2) {$2$};
  \node(103) at (-2.8,1.2) {$2$};
  \node(104) at (-1.8,1.2) {$2$};
  \node (105) at (-0.8,1.2) {$2$};
  \node (106) at  (0.2,1.2) {$2$};
  \node (107) at  (1.2,1.2) {$2$};

  \node(102) at (-4.2,1.2) {$1$};
  \node(102) at (-3.2,1.2) {$1$};
  \node(104) at (-2.2,1.2) {$1$};
  \node (105) at (-1.2,1.2) {$1$};
  \node (106) at  (-0.2,1.2) {$1$};
  \node (107) at  (0.8,1.2) {$1$};

\node at (-5.6, 1) {$G$};

  \node at (3.5,1) {$\cZ:$};
  \node at (4,1) {$\bullet$};
  \node at (4.2,1.3) {$3$};
	\draw[dotted] (4,1) .. controls (3.8,1.5) and (4.2,1.5) .. (4,1);

\tikzstyle{every loop}=   [-, shorten >=.5pt]

  \path[-] 
     (-1) edge (1)
      (0)  edge (1)
      (1)  edge (2)	 
      (2)  edge (4)	
      (4)  edge (5)	
      (5)  edge (6)
      (6)  edge (7)
      (7)  edge (8);

\draw[dotted] (-6.1,2) rectangle (-4.7,0);
\end{tikzpicture}
\end{center}

The factor complexity is 
\begin{equation*}
b(n)=\left\{\begin{array}{lll}1, &\mathrm{if} \emph{ } n=-1 \\
                                         n+3, & \mathrm{if}\emph{ } n\ge 0
                     \end{array}\right.
\end{equation*}
and $\initN=\maxNN=0$. 
\end{exam}

\begin{exam}[quasi-Sturmian coloring with $\initN\not= 0$]
\begin{flushleft}
\end{flushleft}
\begin{center}
\begin{tikzpicture}[every loop/.style={}]
  \tikzstyle{every node}=[inner sep=-1pt]

  \node(0) at (-5.8,1) {$\cX:$};
  \node(1) at (-5,1) {\boldotimes};
  \node(2) at (-4,1) {$\bullet$};
  \node(3) at (-3,1) {$\circ$};
  \node(4) at (-2,1) {$\bullet$};
  \node (5) at (-1,1) {$\circ$};
  \node (6) at  (0,1) {$\bullet$};
  \node (7) at  (1,1) {$\circ$};
  \node (8) at  (2,1) {$\bullet$};
  \node (9) at  (3,1) {$\circ$};
  \node (10) at (4,1) {$\bullet$};
  \node (11) at (5,1) {$\circ$};
  \node (12) at (6,1) {$\cdots$};

  \node(101) at (-4.8,1.2) {$3$};
  \node(102) at (-3.8,1.2) {$1$};
  \node(103) at (-2.8,1.2) {$1$};
  \node(104) at (-1.8,1.2) {$2$};
  \node (105) at (-0.8,1.2) {$2$};
  \node (106) at  (0.2,1.2) {$1$};
  \node (107) at  (1.2,1.2) {$1$};
  \node (108) at  (2.2,1.2) {$1$};
  \node (109) at  (3.2,1.2) {$2$};
  \node (110) at (4.2,1.2) {$1$};
  \node (111) at (5.2,1.2) {$1$};

  \node(102) at (-4.2,1.2) {$1$};
  \node(103) at (-3.2,1.2) {$2$};
  \node(104) at (-2.2,1.2) {$1$};
  \node (105) at (-1.2,1.2) {$1$};
  \node (106) at  (-0.2,1.2) {$1$};
  \node (107) at  (0.8,1.2) {$2$};
  \node (108) at  (1.8,1.2) {$2$};
  \node (109) at  (2.8,1.2) {$1$};
  \node (110) at (3.8,1.2) {$1$};
  \node (111) at (4.8,1.2) {$2$};

\draw[dotted] (-4,1) .. controls (-4.2,1.5) and (-3.8,1.5) .. (-4,1);
\draw[dotted] (0,1) .. controls (-0.2,1.5) and (0.2,1.5) .. (0,1);
\draw[dotted] (4,1) .. controls (3.8,1.5) and (4.2,1.5) .. (4,1);

  \node at (-4,1.5) {$1$};
  \node at (0,1.5) {$1$};
  \node at (4,1.5) {$1$};
  
  \node at (-3,0) {$\cZ:$};
  \node(1000) at (-2,0) {$\bullet$};
  \node(1001) at (-1,0) {$\circ$};
  \node(1002) at (0,0) {$\bullet$};
  \node(1003) at (1,0) {$\circ$};
  \node(1004) at (2,0) {$\bullet$};

\draw[dotted] (-2,0) .. controls (-2.2,0.5) and (-1.8,0.5) .. (-2,0);
\draw[dotted] (2,0) .. controls (1.8,0.5) and (2.2,0.5) .. (2,0);

  \node at (-1.8,0.2) {$2$};
  \node at (-0.8,0.2) {$1$};
  \node  at  (0.2,0.2) {$2$};
  \node at  (1.2,0.2) {$2$};

  \node at (-1.2,0.2) {$2$};
  \node at  (-0.2,0.2) {$1$};
  \node at  (0.8,0.2) {$1$};
  \node at  (1.8,0.2) {$2$};
  
  \node at (-2,0.5){$1$};
  \node at (2,0.5){$1$};



\tikzstyle{every loop}=   [-, shorten >=.5pt]

  \path[-] 
      (1)  edge  (2)	 
      (2)  edge  (3)
      (3)  edge  (4)	
      (4)  edge  (5)	
      (5)  edge  (6)
	(6)  edge (7)
	(7)  edge (8)
	(8)  edge (9)
	(9)  edge  (10)
	(10) edge (11)
	(11)  edge  (12)
	(1000) edge (1001)
	(1001) edge (1002)
	(1002) edge (1003)
	(1003) edge (1004);


\draw[dotted] (-5.3,1.7) rectangle (-2.6,0.7);
\node at (-4.5, 1.9) {$G$};
\end{tikzpicture}
\end{center}

Its factor complexity is
\begin{equation*}
b(n)=\left\{\begin{array}{lll}1, &\mathrm{if} \emph{ } n=-1 \\
                                         3, &\mathrm{if} \emph{ } n=0 \\
                                         n+4, & \mathrm{if}\emph{ } n\geq 1
                     \end{array}\right.
\end{equation*}
and $\initN=\maxNN=1$.
\end{exam}

\begin{exam}[an example with $\initN\not=0$ and the quotient graph is not a geodesic ray]
\begin{flushleft}
\end{flushleft}

\begin{center}
\begin{tikzpicture}[every loop/.style={}]
  \tikzstyle{every node}=[inner sep=-1pt]
  \node(-2) at (-4.8,1.5){$\cX :$};
  \node(-1) at (-3.5, 2){$\bullet$};
  \node(0) at (-3.5, 1){$\bullet$};
  \node(1) at (-3,1.5) {$\circ$};
  \node(2) at (-2,1.5) {$\bullet$};
  \node(3) at (-1,1.5) {\boldotimes};
  \node(4) at (0,1.5) {\boldotimes};
  \node (5) at (1,1.5) {\boldotimes};
  \node (6) at  (2,1.5) {\boldotimes};
  \node (7) at  (3,1.5) {$~.~.~.$};

  \node at (-3.3,2) {$1$};  \node at (-3.3,1) {$2$};
  \node at (-3,1.8) {$1$};  \node at (-3,1.2) {$1$};
  \node at (-3.6, 2.3){$2$}; \node at (-3.6, 0.7) {$1$};

\draw[dotted] (-3.5,2) .. controls (-4,2.1) and (-4,2.5) .. (-3.5,2);
\draw[dotted] (-3.5,1) .. controls (-4,0.5) and (-4,0.9) .. (-3.5,1);

  \node(101) at (-2.8,1.7) {$1$};
  \node(102) at (-1.8,1.7) {$2$};
  \node(103) at (-0.8,1.7) {$2$};
  \node(104) at (0.2,1.7) {$2$};
  \node (105) at (1.2,1.7) {$2$};
  \node (106) at  (2.2,1.7) {$2$};

  \node(102) at (-2.2,1.7) {$1$};
  \node(102) at (-1.2,1.7) {$1$};
  \node (107) at  (-0.2,1.7) {$1$};
  \node(104) at (0.8,1.7) {$1$};
  \node (105) at (1.8,1.7) {$1$};
  
  \node at (4.5,1.5) {$\cZ:$};
  \node at (5,1.5) {\boldotimes};
  \node at (5.2,1.8) {$3$};
	\draw[dotted] (5,1.5) .. controls (4.8,2) and (5.2,2) .. (5,1.5);

\tikzstyle{every loop}=   [-, shorten >=.5pt]

  \path[-] 
	(-1) edge (1)
	(0)  edge (1)
	(1)  edge (2)	 
	(2)  edge (3)	
	(3)  edge (4)
	(4)  edge (5)	
	(5)  edge (6)
	(6)  edge (7);

\draw[dotted] (-4.2,2.5) rectangle (-0.6,0.5);
\node at (-2.7, 2.7) {$G$};

\end{tikzpicture}
\end{center}

\begin{center}
\begin{tikzpicture}[every loop/.style={}]
  \tikzstyle{every node}=[inner sep=-1pt]

\end{tikzpicture}
\end{center}

The factor complexity is 
\begin{equation*}
b(n)=\left\{\begin{array}{lll}3, &\mathrm{if} \emph{ } n=0 \\
                                         n+5, & \mathrm{if}\emph{ } n\geq 1
                     \end{array}\right.
\end{equation*}
and $\initN=\maxNN=1$

\end{exam}

\begin{exam}[an example with a cycle in the compact part $G$]
\begin{flushleft}
\end{flushleft}

\begin{center}
\begin{tikzpicture}[every loop/.style={}]
  \tikzstyle{every node}=[inner sep=-1pt]
  \node(-4) at (-5, 1.5){$\cX :$};
  \node(-3) at (-3.5,2.7){\boldotimes};
  \node(-2) at (-4,1.5){$\circ$};
  \node(-1) at (-3.5, 2){$\bullet$};
  \node(0) at (-3.5, 1){$\bullet$};
  \node(1) at (-3,1.5) {\boldotimes};
  \node(2) at (-2,1.5) {\boldotimes};
  \node(3) at (-1,1.5) {\boldotimes};
  \node(4) at (0,1.5) {\boldotimes};
  \node (5) at (1,1.5) {\boldotimes};
  \node (6) at  (2,1.5) {\boldotimes};
  \node (7) at  (3,1.5) {$~.~.~.$};

  \node at (-3.3,2) {$1$};  \node at (-3.3,1) {$2$};
  \node at (-3,1.8) {$1$};  \node at (-3,1.2) {$1$};
  \node at (-3.6, 2.2){$1$}; \node at (-3.7, 1) {$1$};
  \node at (-4,1.3){$2$}; \node at (-3.9,1.8){$1$}; \node at (-3.7, 2){$1$}; \node at (-3.6,2.5){$3$};

  \node(101) at (-2.8,1.7) {$1$};
  \node(102) at (-1.8,1.7) {$2$};
  \node(103) at (-0.8,1.7) {$2$};
  \node(104) at (0.2,1.7) {$2$};
  \node (105) at (1.2,1.7) {$2$};
  \node (106) at  (2.2,1.7) {$2$};

  \node(102) at (-2.2,1.7) {$1$};
  \node(102) at (-1.2,1.7) {$1$};
  \node (107) at  (-0.2,1.7) {$1$};
  \node(104) at (0.8,1.7) {$1$};
  \node (105) at (1.8,1.7) {$1$};
  
  \node at (4.5,1.5) {$\cZ:$};
  \node at (5,1.5) {\boldotimes};
  \node at (5.2,1.8) {$3$};
	\draw[dotted] (5,1.5) .. controls (4.8,2) and (5.2,2) .. (5,1.5);

\tikzstyle{every loop}=   [-, shorten >=.5pt]

  \path[-] 
     (-3) edge (-1)
     (-1) edge (-2)
      (0) edge (-2)
     (-1) edge (1)
      (0)  edge (1)
      (1)  edge (2)	 
      (2)  edge (3)
      (3)  edge (4)	
      (4)  edge (5)	
      (5)  edge (6)
      (6)  edge (7);

\draw[dotted] (-4.6,2.9) rectangle (-1.6,0.7);
\node at (-3.5, 3.1) {$G$};
\end{tikzpicture}
\end{center}

It has the factor complexity
\begin{equation*}
b(n)=\left\{\begin{array}{lll}3, &\mathrm{if} \emph{ } n=0 \\
                                          5, &\mathrm{if} \emph{ } n=1 \\
                                         n+5, & \mathrm{if}\emph{ } n\geq 2
                     \end{array}\right.
\end{equation*}
and $\initN=\maxNN=2$.
\end{exam}


\begin{exam}[an example with $\initN\not=\maxNN$]
\begin{flushleft}
\end{flushleft}

\begin{center}
\begin{tikzpicture}[every loop/.style={}]
  \tikzstyle{every node}=[inner sep=-1pt]

  \node(0) at (-5.8,1) {$\cX:$};
  \node(1) at (-5,1) {\boldotimes};
  \node(2) at (-4,1) {$\bullet$};
  \node(3) at (-3,1) {$\circ$};
  \node(4) at (-2,1) {$\bullet$};
  \node (5) at (-1,1) {\boldotimes};
  \node (6) at  (0,1) {$\bullet$};
  \node (7) at  (1,1) {$\circ$};
  \node (8) at  (2,1) {$\bullet$};
  \node (9) at  (3,1) {\boldotimes};
  \node (10) at (4,1) {$\bullet$};
  \node (11) at (5,1) {$\circ$};
  \node (12) at (6,1) { $\cdots$};

  \node(101) at (-4.8,1.2) {$3$};
  \node(102) at (-3.8,1.2) {$1$};
  \node(103) at (-2.8,1.2) {$1$};
  \node(104) at (-1.8,1.2) {$1$};
  \node (105) at (-0.8,1.2) {$1$};
  \node (106) at  (0.2,1.2) {$1$};
  \node (107) at  (1.2,1.2) {$1$};
  \node (108) at  (2.2,1.2) {$1$};
  \node (109) at  (3.2,1.2) {$1$};
  \node (110) at (4.2,1.2) {$1$};
  \node (111) at (5.2,1.2) {$1$};

  \node(102) at (-4.2,1.2) {$2$};
  \node(103) at (-3.2,1.2) {$2$};
  \node(104) at (-2.2,1.2) {$2$};
  \node (105) at (-1.2,1.2) {$2$};
  \node (106) at  (-0.2,1.2) {$2$};
  \node (107) at  (0.8,1.2) {$2$};
  \node (108) at  (1.8,1.2) {$2$};
  \node (109) at  (2.8,1.2) {$2$};
  \node (110) at (3.8,1.2) {$2$};
  \node (111) at (4.8,1.2) {$2$};
  
  \node at (-1,-0.5){$\cZ:$};
  \node(1000) at (0,0){$\bullet$};
  \node(1001) at (1,0){$\circ$};
  \node(1002) at (1,-1){$\bullet$};
  \node(1003) at (0,-1){\boldotimes};
  
  \node at (-0.15,-0.2){$2$};
  \node at (0.15,0.2){$1$};
  \node at (0.85,0.2){$2$};
  \node at (1.15,-0.2){$1$};
  \node at (1.15,-0.8){$2$};
  \node at (0.85,-1.2){$1$};
  \node at (0.15, -1.2){$2$};
  \node at (-0.15,-0.8){$1$};






\tikzstyle{every loop}=   [-, shorten >=.5pt]

  \path[-] 
      (1)  edge  (2)	 
      (2)  edge  (3)
      (3)  edge  (4)	
      (4)  edge  (5)	
      (5)  edge  (6)
	(6)  edge (7)
	(7)  edge (8)
	(8)  edge (9)
	(9)  edge  (10)
	(10) edge (11)
	(11)  edge  (12)
	(1000) edge (1001)
	(1001) edge (1002)
	(1002) edge (1003)
	(1003) edge (1000)
;


\draw[dotted] (-5.3,1.5) rectangle (-4.6,0.5);
\node at (-4.9, 1.7) {$G$};
\end{tikzpicture}
\end{center}

The factor complexity is 
\begin{equation*}
b(n)=\left\{\begin{array}{lll}1, &\mathrm{if} \emph{ } n=-1 \\
                                          n+3, &\mathrm{if} \emph{ } n\geq 0
                     \end{array}\right.
\end{equation*}
and $\initN=0$, $\maxNN=1$.
\end{exam}

\subsection{Quasi-Sturmian colorings of unbounded type} The quotient graph of a Sturmian coloring of unbounded type is a geodesic ray or an infinite geodesic (see Theorem 3.8 in \cite{KL}). 
In this section, we show that the same property holds for quasi-Sturmian colorings of unbounded type.

\begin{prop} 
\label{Prop:ForUnboundedType,TheNeighboringVerticesHaveAtMost3TypeSets}
For a quasi-Sturmian coloring of unbounded type, the vertices of a $1$-ball have at most three distinct type sets.
\end{prop}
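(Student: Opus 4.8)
The plan is to mirror the bounded-type analysis of Lemma~\ref{Lem:TheDiffereceOfMaximalTypesBetweenAdjacentVerticesIsLessThan2}, replacing the (now infinite) maximal type by the full type set $\Lambda_u$, and replacing the counting of maximal types by the two-extension structure forced by $b(n+1)-b(n)=1$. First I would record two facts to be used repeatedly. Since the coloring is of unbounded type, Lemma 2.15 of \cite{KL} gives that \emph{every} vertex has an infinite type set. And since $b(n+1)-b(n)=1$ for $n\ge\initN$, for each such $n$ the special ball $S_n$ is the unique colored $n$-ball admitting two distinct colored $(n+1)$-ball extensions, while every other colored $n$-ball has a unique extension; I will call these two extensions the \emph{children} of $S_n$. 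Equivalently, the restriction map $[\B_{n+1}(\cdot)]\mapsto[\B_n(\cdot)]$ is a bijection away from $S_n$ and exactly two-to-one over $S_n$.

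Next I would reduce the statement about type sets to a statement about colored balls at a single large radius. For any finite family of vertices with pairwise distinct type sets there is an $n_0$ so that for all $n\ge n_0$ their colored $n$-balls are pairwise distinct: indeed $\Lambda_w\cap[0,n]$ is determined by $[\B_n(w)]$ (membership of $m\le n$ is the property $[\B_m(w)]=S_m$, read off from the restriction of $[\B_n(w)]$), and two distinct subsets of $\mathbb{N}$ already differ inside a finite window, so one takes $n_0$ to be the largest discriminating index over the finitely many pairs. Arguing by contradiction, if a $1$-ball $\B_1(u)$ carried four vertices $a,b,c,e$ with pairwise distinct type sets, then for every large $n$ the four colored balls $[\B_n(a)],[\B_n(b)],[\B_n(c)],[\B_n(e)]$ would be pairwise distinct, even though $a,b,c,e$ lie pairwise within distance $2$ in $\cT$.

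The core step is to propagate this local branching using the uniqueness of $S_n$. Choosing a large level $\tau$ that is special for one of these four vertices, I would transport the local configuration by the color-preserving isometry $f$ witnessing the speciality of $S_\tau$, exactly as in the proof of Lemma~\ref{Lem:TheDiffereceOfMaximalTypesBetweenAdjacentVerticesIsLessThan2} (where $f\colon\B_\tau(u_k)\to\B_\tau(v)$ and $w=f(u)$), and use Corollary~\ref{Cor:TheSameMaximalTypeImpliesTheSameClass}-type rigidity to identify the transported centres with neighbours of $u$. The goal is to show that four pairwise-distinct germs inside a single $1$-ball cannot be sustained: tracking the two children of $S_n$ across the radii where the four balls split, the branching they require forces, at some radius, either two \emph{distinct} special colored $n$-balls or a single colored $n$-ball with three distinct $(n+1)$-extensions, in either case $b(n+1)-b(n)\ge 2$, contradicting \eqref{uniquespecial}. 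As in the bounded case, where the admissible maximal types of neighbours are exactly $m-1,\,m,\,m+1$, the surviving possibilities organize the neighbours into at most two ``directions'' of type-set behaviour together with the centre, leaving at most three type sets.

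The hard part is precisely this core step. With infinite type sets there is no maximal type on which to induct, so the bounded-type bookkeeping (same maximal type $\Rightarrow$ same class, together with the explicit list $m-1,m,m+1$) must be replaced by a tail argument on whole type sets and by a careful count of how the two children of $S_n$ distribute among the finitely many vertices of a $1$-ball. Making the implication ``four distinct local germs $\Rightarrow$ a level with a second special ball (or a triple extension)'' fully precise — in particular controlling the levels where $S_{n+1}$ does \emph{not} restrict to $S_n$, so that the special ball may jump between branches — is where the argument needs the most care, and is the step I expect to occupy the bulk of the proof.
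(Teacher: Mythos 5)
Your opening reduction is correct but too weak, and the core step is left unproved in a way that I do not think can be repaired as stated. The fact you actually need from the uniqueness of the special ball is not merely that vertices with distinct type sets eventually have distinct colored balls, but the much stronger \emph{tail-disjointness}: if $n\in\Lambda_u\cap\Lambda_v$ with $n\ge\initN$, then $[\B_n(u)]=[\B_n(v)]$ (both are the unique special $n$-ball), hence $\Lambda_u\cap[0,n]=\Lambda_v\cap[0,n]$; consequently two \emph{distinct} type sets have \emph{finite} intersection, so the type sets of the pairwise non-equivalent vertices of $\B_2(u)$ become pairwise disjoint above some threshold $M$. This is what makes four the critical number: picking $l>M+1$ in $\Lambda_u$ (possible because the coloring is of unbounded type), each of $l-1$ and $l+1$ lies in at most one neighbour's type set, so among three neighbours with distinct type sets at least one, say $u_i$, satisfies $\Lambda_{u_i}\cap\{l-1,l,l+1\}=\emptyset$. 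The paper then transports the configuration by the isometry $f$ witnessing $l\in\Lambda_u$, uses the unique extendability of $[\B_{l-1}(u_i)]$ up to its next special level $p>l+1$ to pull back by a second isometry $g$, and produces a vertex $g^{-1}(v)\ne u$ within distance $2$ of $u$ with $l\in\Lambda_{g^{-1}(v)}\cap\Lambda_u$ --- contradicting tail-disjointness. Your proposal never establishes tail-disjointness, and without it neither the pigeonhole nor the final contradiction is available.

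Moreover, the contradiction you aim for does not follow from what you set up. You want ``four pairwise distinct colored $n$-balls meeting in a $1$-ball'' to force, at some level, two distinct special balls or a triple extension, i.e.\ $b(n+1)-b(n)\ge 2$. But that configuration is entirely consistent with quasi-Sturmian colorings: it occurs inside the finite part $G$ of the bounded-type examples in Section 2, where a vertex has three or more neighbours in pairwise distinct classes. No counting of extensions of those four balls alone can yield a contradiction; the unbounded-type hypothesis must enter through the infinitude of the type sets and their pairwise finite intersections, which your reduction to ``distinct germs at a single large radius'' discards. Since you explicitly defer exactly this implication as ``the hard part,'' the proposal has a genuine gap at its centre.
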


\begin{proof}
Suppose that there are three vertices $u_1$, $u_2$, $u_3$ neighboring $u$ such that $u$, $u_1$, $u_2$, $u_3$ have mutually distinct type sets.
If $n\in\Lambda_u\cap\Lambda_v$ and $n\geq \initN$, then $[\B_n(u)]=[\B_n(v)]$ by the uniqueness of the special $n$-ball.
Thus, for $l\leq n$, $l\in\Lambda_u$ if and only if $l\in\Lambda_v$.

If $\Lambda_u\not=\Lambda_v$ and $\Lambda_u\cap\Lambda_v\not=\phi$, then let $N$ be the maximal element of $\Lambda_u\cap\Lambda_v$.
If $\Lambda_u\cap\Lambda_v=\phi$, let $N=-1$.
Choose such $N$ for each pair of vertices from different classes in $\B_2(u)$ and let $M$ be the maximum of such $N$'s. 
Then, the type sets of two non-equivalent vertices in $\B_2(u)$ intersected with $\{M+1, M+2, \cdots\}$ are all mutually disjoint.

Now let $l> M+1$ be in the type set $\Lambda_u$. Such $l$ exists since the coloring is of unbounded type.
At least one of $u_1,u_2,u_3$ has a type set disjoint from $\{l-1,l,l+1\}$, say $u_i$.
Since $l\in\Lambda_u$, there is $v$ such that  $[\B_l(u)]=[\B_l(v)]$ but $[\B_{l+1}(u)]\not=[\B_{l+1}(v)]$.
Let $f:\B_{l}(u)\rightarrow\B_{l}(v)$ be a color-preserving isometry.
Then $[\B_{l-1}(u_i)]=[\B_{l-1}(f(u_i))]$.

Let $p=\min\{k\geq l-1 : k\in\Lambda_{u_i}\}$. 
Since $p>l+1$, $[\B_{l-1}(u_i)]$ has a unique extension to $[\B_{p}(u_i)]$.
Thus, $[\B_{p}(u_i)]$ and $[\B_{p}(f(u_i))]$ are equivalent by a color-preserving isometry $g$.
Since $[\B_{p-1}(g^{-1}(v))]=[\B_{p-1}(v)]$ and $p-1>l$,
$[\B_{l}(g^{-1}(v))]=[\B_{l}(v)]=[\B_{l}(u)]$
and 
$[\B_{l+1}(g^{-1}(v))]=[\B_{l+1}(v)]\not=[\B_{l+1}(u)]$.
Thus, $g^{-1}(v)\not=u$ and $\Lambda_{g^{-1}(v)}\cap\Lambda_{u}$ contains $l>M+1$.
However, since $\metricd(g^{-1}(v),u)\le 2$, it contradicts that $\Lambda_{g^{-1}(v)}\cap\Lambda_{u}\cap \{M+1,M+2,\cdots\}$ is empty.
\end{proof}

Let $(\cT,\phi)$ be a quasi-Sturmian coloring of a tree and $\cX=(X,i)$ be its quotient graph.
If two vertices $u$, $v$ have the same type set, they have the same colored $n$-balls for every $n$, i.e. $u,v$ are equivalent (see Lemma 2.4 in \cite{KL}).
By Proposition~\ref{Prop:ForUnboundedType,TheNeighboringVerticesHaveAtMost3TypeSets}, there are at most $2$ adjacent vertices of each vertex $x\in VX$.

For a quasi-Sturmian coloring of unbounded type, we define $G$ as the set of vertices which has only one adjacent vertex in $X$.
Since factor complexity of $\phi$ is unbounded, $X$ is an infinite graph.
Since $X$ is connected, $G$ is empty or $G$ has a single element.
Thus, we obtain the following characterization of the quotient graphs of quasi-Sturmian colorings of trees.

\begin{thm}\label{Thm:TheQuotientGraphOfQS}
If $\phi$ is a quasi-Sturmian coloring, then the quotient graph is one of the following graphs.

\begin{figure}[h]
\begin{center}
\begin{tikzpicture}[every loop/.style={}]
  \tikzstyle{every node}=[inner sep=-1pt]
  \node(1)  at (-3   ,0.5) {$\bullet$};
  \node(2)  at (-2   ,0.5) {$\bullet$};
  \node(3)  at (-1   ,0.5) {$\bullet$};
  \node(4)  at ( 0   ,0.5) {$\bullet$};
  \node(5)  at ( 1   ,0.5) {$\bullet$};
  \node(6)  at ( 2   ,0.5) {$\bullet$};
  \node(7)  at ( 3   ,0.5) {$~.~.~.$};

	\draw[dotted] (-3,0.5) .. controls (-3.2,1) and (-2.8,1) .. (-3,0.5);
	\draw[dotted] (-2,0.5) .. controls (-2.2,1) and (-1.8,1) .. (-2,0.5);
	\draw[dotted] (-1,0.5) .. controls (-1.2,1) and (-0.8,1) .. (-1,0.5);
	\draw[dotted] (0,0.5) .. controls (-0.2,1) and (0.2,1) .. (0,0.5);
	\draw[dotted] (1,0.5) .. controls (0.8,1) and (1.2,1) .. (1,0.5);
	\draw[dotted] (2,0.5) .. controls (1.8,1) and (2.2,1) .. (2,0.5);

\tikzstyle{every loop}=   [-, shorten >=.5pt]

  \path[-] 
	(1)  edge (2)	 
	(2)  edge (3)	
	(3)  edge (4)
	(4)  edge (5)	
	(5)  edge (6)
	(6)  edge (7)
	;
	
\draw (-3.43,0.5) circle (.4cm);
\node at (-3.43, .5) {$G$};

\end{tikzpicture}
\end{center}

\begin{tikzpicture}[every loop/.style={}]
  \tikzstyle{every node}=[inner sep=-1pt]
  \node(-2) at (-6, 0.5) {$.~.~.~$};
  \node(-1) at (-5, 0.5) {$\bullet$};
  \node(0)  at (-4, 0.5) {$\bullet$};
  \node(1)  at (-3, 0.5) {$\bullet$};
  \node(2)  at (-2, 0.5) {$\bullet$};
  \node(3)  at (-1, 0.5) {$\bullet$};
  \node(4)  at ( 0, 0.5) {$\bullet$};
  \node(5)  at ( 1, 0.5) {$\bullet$};
  \node(6)  at ( 2, 0.5) {$\bullet$};
  \node(7)  at ( 3, 0.5) {$~.~.~.$};

	\draw[dotted] (-5,0.5) .. controls (-5.2,1) and (-4.8,1) .. (-5,0.5);
	\draw[dotted] (-4,0.5) .. controls (-4.2,1) and (-3.8,1) .. (-4,0.5);
	\draw[dotted] (-3,0.5) .. controls (-3.2,1) and (-2.8,1) .. (-3,0.5);
	\draw[dotted] (-2,0.5) .. controls (-2.2,1) and (-1.8,1) .. (-2,0.5);
	\draw[dotted] (-1,0.5) .. controls (-1.2,1) and (-0.8,1) .. (-1,0.5);
	\draw[dotted] (0,0.5) .. controls (-0.2,1) and (0.2,1) .. (0,0.5);
	\draw[dotted] (1,0.5) .. controls (0.8,1) and (1.2,1) .. (1,0.5);
	\draw[dotted] (2,0.5) .. controls (1.8,1) and (2.2,1) .. (2,0.5);

\tikzstyle{every loop}=   [-, shorten >=.5pt]
  \path[-]
   (-2) edge (-1)
   (-1) edge (0)
	(0)  edge (1)
	(1)  edge (2)	 
	(2)  edge (3)	
	(3)  edge (4)
	(4)  edge (5)	
	(5)  edge (6)
	(6)  edge (7)
	;
\end{tikzpicture}
\end{figure}
\end{thm}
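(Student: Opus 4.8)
The plan is to split along the dichotomy that organizes the whole section, treating colorings of bounded type and of unbounded type separately, since these produce the two graphs in the statement and draw on different preliminary results.

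For a coloring of bounded type, the conclusion is already contained in Proposition~\ref{Prop:TheQuotientGraphOfBoundedType}: the quotient graph $X$ is the union of the finite graph $G$, assembled from the vertices of maximal type at most $\maxNN$, together with a geodesic ray attached at the vertex of maximal type $\maxNN+1$. This is exactly the first graph, so no further argument is required in this case, and I would simply quote it.

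For a coloring of unbounded type, the structural input is Proposition~\ref{Prop:ForUnboundedType,TheNeighboringVerticesHaveAtMost3TypeSets}, which bounds by three the number of distinct type sets occurring among the vertices of any $1$-ball. First I would record the dictionary between type sets and vertices of $X$: two vertices of $\cT$ have the same type set if and only if they are equivalent (Lemma 2.4 in \cite{KL}), hence if and only if they project to the same vertex of $X=\Gamma\backslash\cT$. Fix $u\in V\cT$ with neighbors $u_1,\dots,u_d$, so that $\B_1(u)=\{u,u_1,\dots,u_d\}$ carries at most three type sets. One of these is the type set of $u$ itself, so among the $u_i$ at most two type sets differ from that of $u$. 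Translating to $X$, the vertex $\pi(u)$ has at most two neighbors distinct from itself; equivalently, every vertex of $X$ has degree at most two once loops are disregarded. It then remains to classify a connected graph of loop-free degree at most two. Since the factor complexity is unbounded, the coloring is not periodic and $X$ is infinite, so it can be neither a finite path nor a cycle: it is a geodesic ray or a biinfinite geodesic. To decide which, set $G$ to be the set of vertices of $X$ with a single neighbor; two such endpoints in a connected max-degree-two graph would force a finite path, contradicting infiniteness, so $G$ is empty or a single vertex. If $G$ is a singleton the graph is a geodesic ray (the first graph, with $G$ the terminal vertex); if $G$ is empty it is a biinfinite geodesic (the second graph), which completes the case analysis.

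The case split and the final graph-theoretic classification are routine. The one step requiring care is the passage from Proposition~\ref{Prop:ForUnboundedType,TheNeighboringVerticesHaveAtMost3TypeSets} to the degree bound in $X$: one must account for the center's own type set and for self-loops, so that ``three type sets in a $1$-ball'' yields exactly ``two neighbors in $X$'' rather than three. Keeping track of loops correctly is the only genuine subtlety in the argument.
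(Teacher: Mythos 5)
Your proposal is correct and follows essentially the same route as the paper: the bounded case is delegated to Proposition~\ref{Prop:TheQuotientGraphOfBoundedType}, and in the unbounded case Proposition~\ref{Prop:ForUnboundedType,TheNeighboringVerticesHaveAtMost3TypeSets} together with the identification of type sets with vertices of $X$ gives that every vertex of $X$ has at most two neighbors besides itself, after which infiniteness and connectedness of $X$ force a ray or a biinfinite geodesic. The point you flag as the one subtlety (accounting for the center's own type set so that three type sets in a $1$-ball yields two neighbors in $X$) is exactly the step the paper relies on as well.
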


\section{Evolution of factor graphs}\label{Sec_CF}

In this section, we look into quasi-Sturmian colorings of unbounded type in details.
Let us begin by explaining an induction algorithm for quasi-Sturmian colorings of bounded type. 
As in \cite{KL2}, for $n \geq \initN$, $S_n$ denotes a unique special $n$-ball, $C_n$ denotes a centered $n$-ball of $S_{n+1}$, and $A_{n+1}$, $B_{n+1}$ denote two types of extensions of $S_n$.
For a class of $n$-balls $B=[\B_n(x)]$, denote the class of $[\B_{n+1}(x)]$ by $\overline{B}$ and the class of $[\B_{n-1}(x)]$ by $\underline{B}$.
Note that if $B$ is not special, then $\overline{B}$ is well-defined.

Recall from the introduction that for a given quasi-Sturmian coloring $\phi$, for $n\geq {\initN+1}$, the \emph{factor graph} $\mathcal{G}_n$ has $\mathbb{B}_\phi(n)$ as its vertex set. There is an edge between two colored $n$-balls $D$, $E$ if there exist $n$-balls centered at $x$, $y$ in the classes $D$, $E$, respectively, such that $\mathbf{d}(x,y)$=1. 

\subsection{Prelimiary}
Now, we gether preliminaries of cyclic quasi-Sturmian colorings.

\begin{defn}
We say that $D$ is \emph{weakly adjacent} to $E$ if there exist $v,w\in V\cT$ such that $\metricd(v,w)=1$ and $[\B_n(v)]=D$ and $[\B_m(w)]=E$ for some $n,m$.

We also say that $D$ is \emph{strongly adjacent} to $E$ if for any $\B_n(x)$ in the class $D$, there exists a vertex $y$ such that $\B_m(y) \in E$ and $\metricd(x,y)=1$. 
If $D$ is strongly adjacent to $E$ and vice versa, then we say that $D$ and $E$ are strongly adjacent.
\end{defn}

We remark the following fact.
If $[\B_{n+1}(u)]=[\B_{n+1}(v)]$ and $[\B_{n+2}(u)]\not=[\B_{n+2}(v)]$, then there exist neighboring vertices $u'$ and $v'$ of $u$ and $v$, respectively, such that $[\B_{n}(u')]=[\B_{n}(v')]$ and $[\B_{n+1}(u')]\not=[\B_{n+1}(v')]$ (see Lemma 2.11 in \cite{KL} for details).
Thus, $S_{n+1}$ is strongly adjacent to $S_n$ for $n \ge \initN$.

\begin{lem}
\label{lem2.3_qs}
Let $(\cT,\phi)$ be a quasi-Sturmian coloring and $n\geq \initN$. 

\begin{enumerate}[(1)]

\item\label{lem2.3_qs(1)} We can choose ${\{A_n\}_{n\ge {\initN+1}}}$, ${\{B_n\}_{n\ge {\initN+1}}}$ so that $A_{n+1}$, $B_{n+1}$ are strongly adjacent to ${A_n}$, ${B_n}$, respectively. Moreover, $A_{n+1} $, $B_{n+1}$ are uniquely determined if we give the condition that $A_{n+1}$ contains more balls of the class $A_{n}$ than $B_{n+1}$ does.

\item\label{lem2.3_qs(2)} For each vertex $x$ in $\cT-\widetilde{G}$ and $n \ge \initN+1$, 
the $n$-balls with centers adjacent to $x$ belong to at most two classes of $n$-balls apart from $[\mathcal{B}_n(x)]$. 
Thus, for any class $D\not= S_n$ of $n$-balls with centers in $\cT-\widetilde{G}$, 
each vertex of $\mathcal{G}_n$ has degree at most $2$.

\item\label{lem2.3_qs(3)} If $A_n \ne S_n$ (respectively $B_n \ne S_n$), then $A_n$ (respectively $B_n$) is strongly adjacent to $S_n$.

\item\label{lem2.3_qs(4)} The two classes $S_n, C_n$ are strongly adjacent.

\end{enumerate}
\end{lem}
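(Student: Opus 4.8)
The four assertions are really statements about how the colored balls $S_n$, $C_n$, $A_n$, $B_n$ propagate under the passage from radius $n$ to radius $n{+}1$, so the plan is to reduce everything to the strong-adjacency of $S_{n+1}$ to $S_n$ (already recorded in the remark preceding the lemma via Lemma~2.11 of \cite{KL}) together with the uniqueness of the special ball. I would treat the items in the order $(1),(3),(4),(2)$ rather than in the printed order, because the construction of $A_n$, $B_n$ in $(1)$ is what all the others refer to.

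For $(1)$, the plan is to build the sequences by downward-then-upward induction along the strong-adjacency chain. Because $S_{n+1}$ is strongly adjacent to $S_n$, every ball $\B_{n+1}(u)$ in the class $S_{n+1}$ has, among its neighbours, balls whose $n$-class is $S_n$; the two distinct extensions of $S_n$ that actually occur as centered balls of such configurations I would name $A_{n+1}$ and $B_{n+1}$. The point to check is that there are exactly two such extension classes, which follows from $b(n{+}1)-b(n)=1$: the special $n$-ball $S_n$ is the unique class that splits, and a class that splits into more than two would force $b(n{+}1)-b(n)\ge 2$. The uniqueness clause then comes for free by declaring $A_{n+1}$ to be the extension containing strictly more balls of class $A_n$ (a tie cannot occur because the two extension counts are distinct integers once the normalization at the base of the induction is fixed), and strong adjacency $A_{n+1}\to A_n$, $B_{n+1}\to B_n$ is exactly the defining property of the construction.

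Items $(3)$ and $(4)$ I would deduce directly from the remark before the lemma. For $(4)$: since $C_n$ is by definition the centered $n$-ball of $S_{n+1}$, the splitting of $S_{n+1}$ witnessed by two non-equivalent neighbours produces, after the ball-transfer of Lemma~2.11, neighbouring centers realizing $S_n$ adjacent to $C_n$ and $C_n$ adjacent to $S_n$ from both sides, giving mutual strong adjacency. For $(3)$: if $A_n\ne S_n$ then $A_n$ is one of the two extension classes sitting over $S_n$, so by construction $A_n$ is strongly adjacent to $S_n$ (and symmetrically for $B_n$); the hypothesis $A_n\ne S_n$ is exactly what rules out the degenerate case where the extension coincides with the special ball itself.

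The step I expect to be the real obstacle is $(2)$, the degree bound, since it is the only genuinely quantitative claim. The plan is: for a center $x$ in $\cT-\widetilde G$ whose class $[\B_n(x)]=D$ is not $S_n$, the balls $\B_n(u_i)$ over the neighbours $u_i$ of $x$ are determined once $\B_{n+1}(x)$ is fixed, because a non-special $n$-ball has a \emph{unique} extension to radius $n{+}1$, so all neighbours of $x$ carrying the same $n$-class are forced into the same neighbouring $n$-classes. I would then argue that the number of distinct neighbouring $n$-classes is at most two by invoking Proposition~\ref{Prop:ForUnboundedType,TheNeighboringVerticesHaveAtMost3TypeSets} (at most three distinct type sets on a $1$-ball, hence at most two \emph{other} than $[\B_n(x)]$ once the coloring is of unbounded type), with the bounded-type case handled on $\cT-\widetilde G$ where the quotient is already a ray by Proposition~\ref{Prop:TheQuotientGraphOfBoundedType}. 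The delicate point is translating ``at most two neighbouring classes for a fixed center'' into ``each vertex of $\mathcal G_n$ has degree at most $2$'' uniformly over the whole class $D$; this requires that the two neighbouring classes do not depend on the representative $x$ chosen in $D$, which is precisely where the uniqueness of extensions of a non-special ball is used again.
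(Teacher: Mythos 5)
Your items (2) and (3) are handled essentially as in the paper (the paper cites the quotient-graph structure of Theorem~\ref{Thm:TheQuotientGraphOfQS}, which is exactly the combination of Propositions~\ref{Prop:TheQuotientGraphOfBoundedType} and \ref{Prop:ForUnboundedType,TheNeighboringVerticesHaveAtMost3TypeSets} you invoke, and you correctly identify that the unique extension of a non-special $D$ is what makes the neighbour data independent of the representative). The two remaining items each contain a genuine gap.

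In (1), the parenthetical ``a tie cannot occur because the two extension counts are distinct integers once the normalization at the base of the induction is fixed'' is not an argument: the counts $i(A_{n+1},A_n)$ and $i(B_{n+1},A_n)$ (the number of $n$-balls of class $A_n$ sitting inside the $(n+1)$-ball) are intrinsic quantities, and no choice of normalization can make them unequal. Without ruling out the tie, the rule ``$A_{n+1}$ is the extension containing strictly more balls of class $A_n$'' is not well defined and the uniqueness clause fails. The paper's proof supplies the missing step: since $A_{n+1}$ and $B_{n+1}$ are both extensions of $S_n$, one has $i(A_{n+1},A_n)+i(A_{n+1},B_n)=i(B_{n+1},A_n)+i(B_{n+1},B_n)=i(S_n,S_{n-1})$, so a tie in the $A_n$-counts forces a tie in the $B_n$-counts; as every $(n-1)$-ball other than $S_{n-1}$ extends uniquely to an $n$-ball, the two $(n+1)$-balls would then carry identical neighbour data and coincide, contradicting $b(n+1)-b(n)=1$.

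In (4), the direction ``$S_n$ is strongly adjacent to $C_n$'' is not established by your sketch. Strong adjacency demands that \emph{every} vertex $u$ with $[\B_n(u)]=S_n$ have a neighbour of class $C_n$, and the usual weak-to-strong upgrade via unique extension is unavailable precisely because $S_n$ is the special ball and has two extensions; exhibiting one configuration via the ball-transfer of Lemma~2.11 of \cite{KL} only yields weak adjacency. The paper's route, which you should adopt, is: if $S_n=C_n$ there is nothing to prove; if $S_n\ne C_n$ then $A_{n+1}\ne S_{n+1}$ and $B_{n+1}\ne S_{n+1}$, so item (3) applied at level $n+1$ shows both $A_{n+1}$ and $B_{n+1}$ are strongly adjacent to $S_{n+1}$, and since every representative of $S_n$ has $(n+1)$-ball equal to $A_{n+1}$ or $B_{n+1}$, it has a neighbour whose $(n+1)$-ball is $S_{n+1}$, hence whose $n$-ball is $C_n$. (The converse direction, $C_n$ strongly adjacent to $S_n$, does follow your way: $C_n\ne S_n$ extends uniquely to $S_{n+1}$, which is strongly adjacent to $S_n$.) Your reordering of the items already makes (3) available here, so the fix is local.
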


\begin{proof}
(1) Since $S_{n+1}$ is strongly adjacent to $S_n$ for $n \ge \initN$, we can choose sequences $\{A_n\}_{n \ge \initN+1}$, $\{B_n\}_{n \ge \initN+1}$ such that  $A_{n+1}$, $B_{n+1}$ are strongly adjacent to ${A_n}$, ${B_n}$, respectively, and $A_{n+1}$ contains more balls of class $A_n$ than $B_{n+1}$ does. 
Then, such an inductive choice is unique. 

It is sufficient to show that $A_{n+1}$ , $B_{n+1}$ cannot contain the same number of balls of class $A_n$. 
Denote by $i(D,E)$ the number of $n$-balls of class $E$ which are contained in the colored $(n+1)$-ball $D$.
Then, we have
$$
i(S_n,S_{n-1})= i(A_{n+1}, A_n) +i(A_{n+1}, B_n) = i(B_{n+1},A_n)+i(B_{n+1}, B_n).
$$
If  $i(A_{n+1},A_n)=i(B_{n+1},A_n)$, then $i(A_{n+1},B_n)=i(B_{n+1},B_n)$. Since each $(n-1)$-ball not $S_{n-1}$ has a unique extension to  {an} $n$-ball, we have $A_{n+1}=B_{n+1}$.

(2) By Theorem ~\ref{Thm:TheQuotientGraphOfQS}, there are at most two congruent classes of vertices adjacent to any given vertex $x$ in $\cT-\widetilde{G}$, apart from itself. 
In other words, the number of classes of $n$-balls with  {centers} adjacent to $x$ in $\cT-\widetilde{G}$ is also at most two, apart from $[\B_n(x)]$. 
For  {any class $D\not=S_n$ of $n$-balls with centers} in $\cT-\widetilde{G}$, $D$ has a unique extension to  {$(n+1)$-balls.
Thus, there are at most two classes of $n$-balls which are weakly adjacent to $D$.}

(3) By (\ref{lem2.3_qs(1)}), $A_n$ is weakly adjacent to $\underline{A_{n+1}}=S_n$. If $A_n \ne S_n$, then $A_n$ has a unique extension to $\overline{A_n}$. Thus, $A_n$ is strongly adjacent to $S_n$.

(4)  {
Since $S_{n+1}$ is strongly adjacent to $S_n$ for $n \ge \initN$,
$C_n$ is strongly adjacent to $S_n$.
If $S_n=C_n$, we are done.
} 
 {Suppose that $S_n \ne C_n$.} 
Note that $A_{n+1} \ne S_{n+1}$ and $B_{n+1} \ne S_{n+1}$. 
By (\ref{lem2.3_qs(3)}), both $A_{n+1}, B_{n+1}$ are strongly adjacent to $S_{n+1}$. Hence, $S_n$ is strongly adjacent to $C_n$.
\end{proof}

 We will specify the choice of $A_{\initN+1}$ from the two extension of $S_{\initN}$ for acyclic quasi-Sturmian colorings later.

\begin{lem}
\label{lem2.4_qs}
Let $\phi$ be a quasi-Sturmian coloring and $n$ be greater than $\initN$.
 {Let $D$ be a colored $n$-ball other than $A_n$, $B_n$ and $S_n$.
Assume that $S_n$ and $D$ are weakly adjacent.
Then, we have that}

\begin{enumerate}[(1)]

\item\label{lem2.4_qs(1)} 
 {the special ball $S_n$ and $D$ are strongly adjacent, and}
\item\label{lem2.4_qs(2)} if  {
$D\ne C_n$}, then $S_n \ne C_n$.

\end{enumerate}

\end{lem}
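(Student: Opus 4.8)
The plan is to treat the two assertions separately, and in both cases to exploit the one structural fact available at these radii: since $n>\initN$, the special ball is \emph{unique} at radii $n-1$, $n$ and $n+1$, so that every colored ball other than the relevant special ball has a unique one-step extension, and $\{A_n,B_n\}$ are precisely the two $n$-balls whose contraction is $S_{n-1}$ (this uses $n-1\ge\initN$).

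For (1) I will prove the two directions of strong adjacency by transporting isometries along a fixed witnessing edge $v\!-\!w$ of the weak adjacency, where $[\B_n(v)]=D$ and $[\B_n(w)]=S_n$. For ``$D$ strongly adjacent to $S_n$'': given any $x$ with $[\B_n(x)]=D$, I use $[\B_{n+1}(x)]=\overline D=[\B_{n+1}(v)]$ (unique extension, as $D\neq S_n$) and the color-preserving isometry $\B_{n+1}(v)\to\B_{n+1}(x)$; because $w$ is a neighbor of $v$ we have $\B_n(w)\subset\B_{n+1}(v)$, so the isometry sends $w$ to a neighbor of $x$ of class $S_n$. For the reverse direction: given any $x$ with $[\B_n(x)]=S_n$, I transport the isometry $\B_n(w)\to\B_n(x)$; it sends $v$ to a neighbor $v'$ of $x$ with $[\B_{n-1}(v')]=\underline D$, and since $D\notin\{A_n,B_n\}$ forces $\underline D\neq S_{n-1}$, the ball $\underline D$ has a unique extension, namely $D$, whence $[\B_n(v')]=D$. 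Thus $S_n$ and $D$ are strongly adjacent.

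For (2) I will argue by contradiction, assuming $S_n=C_n$ (so the hypothesis $D\neq C_n$ is then automatic, and I am really showing that no such $D$ can exist). Since $C_n=\underline{S_{n+1}}$, the ball $S_{n+1}$ is an extension of $S_n$, hence $S_{n+1}\in\{A_{n+1},B_{n+1}\}$; say $S_{n+1}=A_{n+1}$. The key step is to lift the adjacency one radius. Both extension types of $S_n$ are realized, since the special ball has exactly two extensions, and by part (1) \emph{every} $S_n$-center, of either type, has a $D$-neighbor. Lifting: an $A_{n+1}$-type center and its $D$-neighbor give an edge $A_{n+1}\!-\!\overline D$ in $\G_{n+1}$, and a $B_{n+1}$-type center gives an edge $B_{n+1}\!-\!\overline D$, where $\overline D$ is the single well-defined extension of $D$ and satisfies $\overline D\neq A_{n+1},B_{n+1},S_{n+1}$ because $\underline{\overline D}=D\neq S_n=\underline{A_{n+1}}=\underline{B_{n+1}}$.

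Finally, Lemma~\ref{lem2.3_qs}(3) applied at radius $n+1$ (legitimate because $B_{n+1}\neq S_{n+1}$) yields an edge $A_{n+1}\!-\!B_{n+1}$, so the three distinct vertices $A_{n+1}$, $B_{n+1}$, $\overline D$ span a $3$-cycle in $\G_{n+1}$, contradicting acyclicity of the factor graph. I expect the delicate point to be exactly this closing step: one must check that the two lifted edges genuinely meet the \emph{distinct} endpoints $A_{n+1}\neq B_{n+1}$ at the common vertex $\overline D$ (so that a bona fide cycle, not a doubled edge, is produced), and one must be careful that the contradiction is drawn from the running acyclicity hypothesis rather than from the trichotomy one is trying to establish. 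In a setting where cycles are permitted, the same configuration would instead have to be excluded by showing that the trapped degree-two classes $\overline D$ and $B_{n+1}$ force $\overline D$ to be a second special ball at radius $n+1$, contradicting the uniqueness of $S_{n+1}$.
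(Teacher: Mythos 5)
Your proof of part (1) is correct and follows the paper's route: the weak-adjacency witness plus the unique extension of $\underline D$ (which is not $S_{n-1}$ precisely because $D\notin\{A_n,B_n\}$ and $A_n,B_n$ are the only two extensions of $S_{n-1}$) gives that $S_n$ is strongly adjacent to $D$, and the unique extension $\overline D$ of $D$ gives the reverse direction. You make both directions explicit where the paper leaves one partly implicit; no issue there.

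Part (2), however, has a genuine gap. Your main argument ends by producing the triangle $A_{n+1}$--$B_{n+1}$--$\overline D$ in $\G_{n+1}$ and ``contradicting acyclicity of the factor graph,'' but the lemma carries no acyclicity hypothesis, and it is invoked later exactly in the cyclic situation: Lemma~\ref{lem:bdd_1}(2) applies part (2) under the standing assumption that $\G_n$ \emph{has} a cycle, and Proposition~\ref{prop:adjacent}(2) uses it to prove that every cyclic quasi-Sturmian coloring is of bounded type. So acyclicity is not available, and the triangle alone contradicts nothing: each of its vertices has degree $2$, which is perfectly consistent with Lemma~\ref{lem2.3_qs}(\ref{lem2.3_qs(2)}). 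Your fallback remark, that the trapped degree-two classes would force $\overline D$ to be a second special $(n+1)$-ball, is not substantiated --- lying on a cycle does not make a ball special, and the paper explicitly studies colorings whose factor graphs contain cycles through non-special balls. The paper's proof closes the argument differently, by adjoining a \emph{fourth} class: choosing (without loss of generality) $A_n\neq S_n$, Lemma~\ref{lem2.3_qs}(\ref{lem2.3_qs(1)}) makes $A_{n+1}$ strongly adjacent to $A_n$ and hence weakly adjacent to $\overline{A_n}$, and one checks that $B_{n+1}$, $\overline D$, $\overline{A_n}$ are mutually distinct and all different from $A_{n+1}$. A single tree vertex of class $A_{n+1}$ (one realizing the $B_{n+1}$-adjacency, which automatically also has $\overline D$- and $\overline{A_n}$-neighbours by strong adjacency) then has neighbours in three distinct classes besides its own, contradicting the local bound of Lemma~\ref{lem2.3_qs}(\ref{lem2.3_qs(2)}) coming from the linearity of the quotient graph outside $G$. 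To repair your argument you should add the edge to $\overline{A_n}$ and derive this degree contradiction at a single vertex, rather than appealing to the absence of cycles.
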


\begin{proof}
(1) Since $S_n$ is weakly adjacent to $D$,  {$S_n$ is strongly adjacent to $\underline{D}$. 
By the assumption, $\underline{D}\ne S_{n-1}$, i.e. $\underline{D}$ is uniquely extended to $D$.
Thus, $S_n$ and $D$ are strongly adjacent.}

(2) Assume that $D\ne C_n$. If $S_n=C_n$, then  {either $S_{n+1}=A_{n+1}$ or $S_{n+1}=B_{n+1}$. 
By Lemma~\ref{lem2.3_qs} (\ref{lem2.3_qs(3)}), 
$A_{n+1}$ and $B_{n+1}$ are weakly adjacent.
Since $A_n\ne S_n$ or $B_n\ne S_n$, we may assume that $A_n\ne S_n$.
We obtain that $A_{n+1}$ is strongly adjacent to $D$ by (\ref{lem2.4_qs(1)}) and to $A_n$ by Lemma ~\ref{lem2.3_qs} (\ref{lem2.3_qs(1)}).
Then, $A_{n+1}$ is weakly adjacent to $B_{n+1}$, $\overline{D}$ and $\overline{A_n}$.
Since $A_{n+1}$, $B_{n+1}$, $\overline{D}$ and $\overline{A_n}$ are mutually distinct,
it contradicts Lemma~\ref{lem2.3_qs} (\ref{lem2.3_qs(2)}).}
\end{proof}


\begin{prop}\label{Prop:EvolutionOfAcyclicQS}
If there are two vertices of degree at least three in $\G_n$ for some $n > \initN$, then the quasi-Sturmian coloring $(\cT,\phi)$ is of bounded type.
\end{prop}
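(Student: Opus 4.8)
The plan is to prove the contrapositive: if $(\cT,\phi)$ is of unbounded type, then for every $n>\initN$ at most one vertex of $\G_n$ has degree at least three, so two such vertices can never occur. Since $S_n$ is the unique special $n$-ball, the assertion to establish is that \emph{every} class $D\neq S_n$ has degree at most $2$ in $\G_n$. I would split the classes into two families according to where their centers lie relative to $\widetilde G$, and handle each with a result already in hand.

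For the classes realized by a center outside $\widetilde G$, this is exactly Lemma~\ref{lem2.3_qs}(\ref{lem2.3_qs(2)}): such a $D\neq S_n$ has a unique extension $\overline D$, so the classes of the $n$-balls centered at the neighbors of any center of $D$ are read off from $\overline D$, and there are at most two of them apart from $D$ itself, giving degree at most $2$. The only classes not yet covered are those all of whose centers lie in $\widetilde G$. Here I would invoke the unbounded structure: by Theorem~\ref{Thm:TheQuotientGraphOfQS}, together with the discussion preceding it that rests on Proposition~\ref{Prop:ForUnboundedType,TheNeighboringVerticesHaveAtMost3TypeSets}, the quotient graph $X$ of a coloring of unbounded type is a geodesic ray or a biinfinite geodesic, and $G$ is empty or a single vertex. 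Hence $\widetilde G$ is the set of lifts of at most one vertex of $X$ (the endpoint of the ray), so there is at most one class all of whose centers lie in $\widetilde G$; that endpoint has only one neighbor in $X$, so if its class is not $S_n$ it is adjacent to at most one other class in $\G_n$ and has degree at most $1$. Combining the two families, every class other than $S_n$ has degree at most $2$, so $S_n$ is the only possible vertex of degree at least three. This contradicts the hypothesis that $\G_n$ has two such vertices, and therefore $(\cT,\phi)$ is of bounded type.

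The step I expect to require the most care is the bookkeeping packaged inside Lemma~\ref{lem2.3_qs}(\ref{lem2.3_qs(2)}): in the unbounded case the geodesic $X$ may carry loops, so a non-special class $D$ can be self-adjacent in $\G_n$ while also having neighbors in two further classes. One must be sure that such a loop is \emph{not} counted toward the degree, so that this $D$ still has degree $2$ rather than $3$. Since the lemma is already available I would simply cite it, but in writing the proof I would state explicitly that here ``degree'' denotes the number of distinct adjacent classes other than the vertex itself — the convention under which Lemma~\ref{lem2.3_qs}(\ref{lem2.3_qs(2)}) and the tripod/linear description of Theorem~\ref{Thm:Main2} are mutually consistent — and verify that it is precisely the quantity constrained by the ``at most two classes apart from $[\B_n(x)]$'' clause of that lemma.
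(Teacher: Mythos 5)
Your proposal is correct and follows essentially the same route as the paper: the paper also argues the contrapositive, citing Lemma~\ref{lem2.3_qs}~(\ref{lem2.3_qs(2)}) to conclude that for a coloring of unbounded type only $S_n$ can be adjacent to three distinct classes, hence at most one vertex of $\G_n$ has degree at least three. Your extra bookkeeping for classes whose centers all lie in $\widetilde G$ (using that $G$ is empty or a single endpoint in the unbounded case) and your remark on the self-adjacency convention are refinements the paper leaves implicit, not a different argument.
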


\begin{proof}
If $\phi$ is of unbounded type, $S_n$ is the unique vertex adjacent to distinct three classes of $n$-balls in $\G_n$ by Lemma ~\ref{lem2.3_qs} (\ref{lem2.3_qs(2)}). 
Thus, there is at most one vertex of degree at least three in $\G_n$.
\end{proof}


\begin{defn}
A quasi-Sturmian coloring is $\emph{cyclic}$ if there is a cycle containing $S_n$ in $\mathcal{G}_n$ for some $n>\initN$. If not, we say that a quasi-Sturmian coloring is $\emph{acyclic}$.
\end{defn}
\begin{lem}\label{lem:bdd_1}
Suppose that $\G_n$ has a cycle whose lift in $X$ is not contained in $G$ for some $n \ge \initN+1$. The following statements hold.
\begin{enumerate}

\item\label{lem:bdd_1(1)} The special ball $S_n$ is in the cycle.
\item\label{lem:bdd_1(2)}  {If $D\ne A_n,B_n,C_n,S_n$, then $D$ is not weakly adjacent to $S_n$.}
\end{enumerate}
\end{lem}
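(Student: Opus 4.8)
The plan is to treat the two parts separately, using the degree bound of Lemma~\ref{lem2.3_qs}(\ref{lem2.3_qs(2)}) together with the fact that $\G_n$ is connected, being the image of the connected tree $\cT$ under the graph morphism $v\mapsto[\B_n(v)]$.

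For part (\ref{lem:bdd_1(1)}) I would argue by contradiction. First I observe that, since the lift of the cycle to $X$ is not contained in $G$ and $X\setminus G$ is a geodesic ray (Proposition~\ref{Prop:TheQuotientGraphOfBoundedType}, Theorem~\ref{Thm:TheQuotientGraphOfQS}), every vertex of the cycle is realized by a center lying in $\cT-\widetilde{G}$. Suppose now that $S_n$ is not a vertex of the cycle $\mathcal C$. Then each vertex of $\mathcal C$ is a class $D\ne S_n$ with a center in $\cT-\widetilde{G}$, so by Lemma~\ref{lem2.3_qs}(\ref{lem2.3_qs(2)}) it has degree at most $2$ in $\G_n$. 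As each such vertex already carries its two cycle edges, it has no further incident edge; hence no edge of $\G_n$ leaves the vertex set of $\mathcal C$. Thus $\mathcal C$ is a whole connected component of $\G_n$, and since $\G_n$ is connected we get $\G_n=\mathcal C$. But $S_n$ is a vertex of $\G_n$ not lying on $\mathcal C$, a contradiction. Therefore $S_n$ lies on the cycle.

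For part (\ref{lem:bdd_1(2)}), suppose toward a contradiction that some $D\ne A_n,B_n,C_n,S_n$ is weakly adjacent to $S_n$. By Lemma~\ref{lem2.4_qs}, $S_n$ and $D$ are strongly adjacent and $S_n\ne C_n$. I would then pass to level $n+1$: the special ball $S_n$ has exactly the two extensions $A_{n+1},B_{n+1}$, both different from $S_{n+1}$, and by part (\ref{lem:bdd_1(1)}) their centers lie in $\cT-\widetilde{G}$, so each has degree at most $2$ in $\G_{n+1}$ by Lemma~\ref{lem2.3_qs}(\ref{lem2.3_qs(2)}). Since $D$ is strongly adjacent to $S_n$, every center of $S_n$ — in particular those extending to $A_{n+1}$ and to $B_{n+1}$ — has a neighbour of class $D$, so the (unique, as $D\ne S_n$) extension $\overline D$ is a common neighbour of $A_{n+1}$ and $B_{n+1}$ in $\G_{n+1}$, with $\overline D\ne S_{n+1}$ because $D\ne C_n$ and $\overline{C_n}=S_{n+1}$. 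Moreover $A_{n+1}$ is adjacent to $S_{n+1}$ by Lemma~\ref{lem2.3_qs}(\ref{lem2.3_qs(3)}); if $A_n\ne S_n$ it is also adjacent to $\overline{A_n}$ by Lemma~\ref{lem2.3_qs}(\ref{lem2.3_qs(1)}), so if in addition $A_n\ne C_n$ then $\overline{A_n}\notin\{S_{n+1},\overline D\}$ (using $A_n\ne D$), giving three distinct neighbours of $A_{n+1}$ and contradicting the degree bound. Hence $A_n\in\{S_n,C_n\}$, and likewise $B_n\in\{S_n,C_n\}$; since $A_n\ne B_n$ we conclude $\{A_n,B_n\}=\{S_n,C_n\}$, so one of them, say $A_n$, equals $S_n$.

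It remains to exploit $A_n=S_n$. Since $A_{n+1}$ is strongly adjacent to $A_n=S_n$, a center of $A_{n+1}$ has a neighbour of class $S_n$, whose $(n+1)$-ball is $A_{n+1}$ or $B_{n+1}$; this produces at $A_{n+1}$ either a loop or an edge to $B_{n+1}$, an adjacency distinct from both $S_{n+1}$ and $\overline D$ (as $B_{n+1}\ne S_{n+1}$ and $B_{n+1}\ne\overline D$, the latter because $D\ne S_n$). This again forces $A_{n+1}$ to have degree at least $3$, contradicting Lemma~\ref{lem2.3_qs}(\ref{lem2.3_qs(2)}), so no such $D$ exists. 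The step I expect to be most delicate is the reading of the hypothesis that the lift is not contained in $G$: I must make sure it really places every vertex of the cycle on the ray $X\setminus G$, so that the uniform degree bound of Lemma~\ref{lem2.3_qs}(\ref{lem2.3_qs(2)}) applies to all of them; the remainder is just bookkeeping of the forced neighbours of $A_{n+1}$ and $B_{n+1}$ at level $n+1$.
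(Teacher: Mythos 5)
Your proof of part (\ref{lem:bdd_1(1)}) is correct and is essentially the paper's argument: both rest on the connectedness of $\G_n$ together with the degree bound of Lemma~\ref{lem2.3_qs}~(\ref{lem2.3_qs(2)}) (the paper phrases it as: $S_n$ would be joined by a path to some cycle vertex $D\ne S_n$, which would then have three distinct neighbours).

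Part (\ref{lem:bdd_1(2)}) has a genuine gap at the last step. Your deductions up to and including $\{A_n,B_n\}=\{S_n,C_n\}$ are sound, but the concluding contradiction fails in the ``loop'' branch. Lemma~\ref{lem2.3_qs}~(\ref{lem2.3_qs(2)}) only bounds the number of adjacent classes \emph{apart from the class of the vertex itself}; a loop at $A_{n+1}$ contributes nothing to that count (the examples in Section~2, whose ray vertices carry loops in addition to two other neighbours, show loops are fully compatible with the lemma). So if every $S_n$-neighbour of every centre of $A_{n+1}$ is itself of class $A_{n+1}$, then $A_{n+1}$ is adjacent only to $S_{n+1}$, $\overline{D}$ and itself, and no contradiction arises. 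Note also that your argument barely uses the cycle hypothesis, whereas Proposition~\ref{prop:adjacent}~(1) shows that such a $D$ \emph{can} occur when there is no cycle at level $n$ (it then creates one at level $n+1$), so the cycle must enter in an essential way. The paper derives the contradiction at $\overline{D}$ rather than at $A_{n+1}$: by part (\ref{lem:bdd_1(1)}) the two cycle edges at $S_n$ lead to $C_n$ and $D$, so $D$ has a further cycle-neighbour $E$ with $E\ne S_n, D$; then $\overline{D}$, which is not $S_{n+1}$ since $D\ne C_n$, is adjacent to the three distinct classes $A_{n+1}$, $B_{n+1}$ and $\overline{E}$, and this does violate Lemma~\ref{lem2.3_qs}~(\ref{lem2.3_qs(2)}). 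Replacing your final paragraph by this argument repairs the proof.
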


\begin{proof}
(1)  {
If $S_n$ is not in the cycle, then it connected to a vertex $D$ of $\G_n$ which is in the cycle.
Thus, the degree of $D$ is greater than $2$.
It contradicts Lemma~\ref{lem2.3_qs} (\ref{lem2.3_qs(2)}).
}

(2) Assume that $S_n$ is weakly adjacent to $D \ne A_n, B_n, C_n,S_n$. 
By Lemma~\ref{lem2.4_qs} (\ref{lem2.4_qs(1)}), $S_n$ is strongly adjacent to $D$. 
By Lemma~\ref{lem2.4_qs} (\ref{lem2.4_qs(2)}) and Lemma~\ref{lem2.3_qs} (\ref{lem2.3_qs(4)}), $S_n \ne C_n$ and $S_n$ is strongly adjacent to $C_n$. 
Hence, $S_{n+1} \ne A_{n+1},B_{n+1} $, and  {the colored $n$-balls appearing in} $A_{n+1},B_{n+1}$ are in $\{C_n, D, S_n\}$ by Lemma~\ref{lem2.3_qs} (\ref{lem2.3_qs(2)}). 
By (\ref{lem:bdd_1(1)}), $C_n\text{ and }D$ are in the cycle. Since $S_n$ is extended to $A_{n+1}\text{ and } B_{n+1}$,  {the degree of $\overline{D}$ is $3$} in $\G_{n+1}$. It is a contradiction.
\end{proof}


\begin{lem}\label{lem:bdd_2}
 {
For $n>\initN$, suppose that $\G_n$ has a cycle whose lift in $X$ is not contained in $G$.
\begin{enumerate}
\item If $C_n$ is not contained in the cycle, then $\G_{n+l}$ has a cycle containing $C_{n+l}$ for some $l \ge 1$.
\item If $C_n=S_n$, then $\G_{n+1}$ has a cycle containing $C_{n+1}$ and $C_{n+1} \ne S_{n+1}$.
\end{enumerate}
}
\end{lem}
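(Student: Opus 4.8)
The plan is to analyze how the special ball and the cycle interact under the extension operation $\overline{(\cdot)}$, tracking the position of $C_{n+l}$ relative to the cycle. First I would set up the basic structure: by Lemma~\ref{lem:bdd_1}(\ref{lem:bdd_1(1)}), the special ball $S_n$ lies on the cycle in $\G_n$, and by Lemma~\ref{lem:bdd_1(2)} the only colored balls weakly adjacent to $S_n$ are among $A_n, B_n, C_n, S_n$. The cycle through $S_n$ uses two of its neighbors, and since $S_n$ is strongly adjacent to $C_n$ (Lemma~\ref{lem2.3_qs}(\ref{lem2.3_qs(4)})), $C_n$ is a natural candidate for being on the cycle. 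The point of part (1) is that even when $C_n$ currently sits off the cycle, after finitely many extension steps the centered ball $C_{n+l}$ of $S_{n+l+1}$ must rejoin the cycle.

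\textbf{Proof of (1).} For part (1), I would argue by tracking the unique finite path from $S_n$ to $C_n$ in $\G_n$. Since $S_n$ lies on the cycle and has degree controlled by Lemma~\ref{lem2.3_qs}(\ref{lem2.3_qs(2)}) (at most two non-special adjacencies beyond the cycle), the vertex $C_n$, being strongly adjacent to $S_n$ but not on the cycle, hangs off $S_n$ along a branch. The key mechanism is that applying $\overline{(\cdot)}$ shifts the special ball one step toward $C_n$: since $S_{n+1}$ has centered ball $C_n$, and $S_{n+1}$ is strongly adjacent to $S_n$ (so $\underline{S_{n+1}} = S_n$), the new special ball $S_{n+1}$ is the extension sitting between $S_n$ and $C_n$. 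I would show that at each step the finite distance in $\G_{n+l}$ from $S_{n+l}$ to the cycle (measured along the branch containing the centered ball) strictly decreases, because the only ball that can fail to extend uniquely is the special one, so non-special balls on the branch simply propagate their extensions while the special marker advances toward the cycle. After $l$ steps, where $l$ is this initial distance, $C_{n+l}$ lands on the cycle. The main obstacle here is ruling out the branch growing or the cycle breaking during extension; this is handled by Lemma~\ref{lem:bdd_1(2)}, which forbids any new ball $D \neq A_{n+l}, B_{n+l}, C_{n+l}, S_{n+l}$ from being weakly adjacent to $S_{n+l}$, so the cycle cannot spawn extra branches and must persist under extension.

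\textbf{Proof of (2).} For part (2), the hypothesis $C_n = S_n$ means $S_{n+1}$ is a centered-at-$S_n$ extension, so $S_{n+1} \in \{A_{n+1}, B_{n+1}\}$. Since the cycle through $S_n$ uses two neighbors of $S_n$ on the cycle, and $C_n = S_n$ forces $S_n$ to extend in two distinct ways (namely $A_{n+1}$ and $B_{n+1}$), I would show these two extensions correspond to the two cycle-directions out of $S_n$: the cycle in $\G_n$ lifts to a closed walk, and extending $S_n$ in both ways produces $A_{n+1}$ and $B_{n+1}$ sitting on the extended cycle in $\G_{n+1}$. The special ball $S_{n+1}$ is exactly one of them, and its centered ball $C_{n+1}$ is the adjacent ball along the cycle, hence lies on the cycle; moreover $C_{n+1} \neq S_{n+1}$ precisely because the cycle proceeds to a genuinely different ball (the two extensions $A_{n+1}, B_{n+1}$ are distinct by Lemma~\ref{lem2.3_qs}(\ref{lem2.3_qs(1)})). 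I expect the delicate point to be verifying that the cycle indeed survives the single extension step and that $C_{n+1}$ is forced off the diagonal $C_{n+1}=S_{n+1}$; this follows from the distinctness of $A_{n+1}$ and $B_{n+1}$ together with the fact that $S_{n+1}$ is strongly adjacent to $S_n$, so the successor of $S_{n+1}$ along the cycle cannot coincide with $S_{n+1}$ itself.
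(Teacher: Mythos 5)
Your high-level plan coincides with the paper's: show the branch of $\G_n$ hanging off the cycle through $C_n$ is absorbed one vertex per extension step, so $C_{n+l}$ eventually rejoins the cycle, and in part (2) extend the cycle once and identify $C_{n+1}$. But the crux of both parts is that the cycle actually survives the passage from $\G_n$ to $\G_{n+1}$, and your argument never establishes this. The only ball that extends non-uniquely is $S_n$, which by Lemma~\ref{lem:bdd_1}~(\ref{lem:bdd_1(1)}) lies \emph{on} the cycle; when it splits into the two distinct extensions $A_{n+1}$ and $B_{n+1}$, the cycle a priori breaks open into a path. The paper re-closes it by noting that $C_n\ne S_n$ forces $S_{n+1}\ne A_{n+1},B_{n+1}$, so by Lemma~\ref{lem2.3_qs}~(\ref{lem2.3_qs(3)}) both $A_{n+1}$ and $B_{n+1}$ are strongly adjacent to $S_{n+1}=\overline{C_n}$, producing the cycle $[S_{n+1}A_{n+1}\overline{A_n}\cdots\overline{B_n}B_{n+1}S_{n+1}]$. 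Your appeal to Lemma~\ref{lem:bdd_1}~(\ref{lem:bdd_1(2)}) only restricts which balls may be adjacent to the special ball; it does not supply the re-closing edges through $S_{n+1}$. Relatedly, your parenthetical ``$\underline{S_{n+1}}=S_n$'' is false (by definition $\underline{S_{n+1}}=C_n$), and the picture of the ``special marker advancing toward the cycle'' is inverted: $S_{n+l}$ is always on the cycle, so the ``distance from $S_{n+l}$ to the cycle'' you propose to induct on is identically zero. The correct decreasing quantity is the number of vertices on the branch from $S_l$ toward $C_l$, and one still needs Lemma~\ref{lem:bdd_1}~(\ref{lem:bdd_1(2)}) at each stage to see that $C_{n+1}$ is forced to be the next branch vertex $\overline{D^1}$ when the branch is nonempty.

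In part (2) your reason for $C_{n+1}\ne S_{n+1}$ does not work: distinctness of $A_{n+1}$ and $B_{n+1}$ does not preclude $C_{n+1}=S_{n+1}$, since $C_{n+1}$ is not automatically ``the successor of $S_{n+1}$ along the cycle.'' The paper's argument is that $\overline{A_n}$ is weakly adjacent to $S_{n+1}$ $(=A_{n+1}$, say$)$ yet is none of $A_{n+1},B_{n+1},S_{n+1}$ because $A_n\ne S_n$ means $\overline{A_n}$ is not an extension of $S_n$; Lemma~\ref{lem:bdd_1}~(\ref{lem:bdd_1(2)}) then forces $C_{n+1}=\overline{A_n}$, which gives both conclusions at once. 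Without that identification your part (2) is incomplete.
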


\begin{proof}
 {
(1) By Lemma~\ref{lem:bdd_1}, $S_n,A_n,B_n,C_n$ are all distinct.
By Lemma~\ref{lem2.3_qs} (\ref{lem2.3_qs(3)}) and (\ref{lem2.3_qs(4)}), $A_n,B_n,C_n$ are strongly adjacent to $S_n$.
Thus, $A_n$, $B_n$ are in the cycle.
By Lemma~\ref{lem2.3_qs} (\ref{lem2.3_qs(1)}), $A_{n+1}$ and $\overline{A_n}$ are weakly adjacent and $B_{n+1}$ and $\overline{B_n}$ are weakly adjacent.
Since $C_n\ne S_n$, we have $S_{n+1}\ne A_{n+1},B_{n+1}$.
Thus, $A_{n+1}, B_{n+1}$ are strongly adjacent to $S_{n+1}$.
In $\G_{n+1}$, there is a cycle 
$$
[S_{n+1}A_{n+1}\overline{A_n}\cdots\overline{B_n}B_{n+1}S_{n+1}].
$$
If $C_{n+1}$ is equal to one of $A_{n+1}$, $B_{n+1}$, $S_{n+1}$, then we are done.
If $C_{n+1}$ is not equal to $A_{n+1}$, $B_{n+1}$, $S_{n+1}$, then we can apply the above argument again.} 
 {For each $l \ge n$, the number of the vertices of the subgraph of $\G_l$ starting from $S_l$ toward $C_l$ is a decreasing function on $l$. Hence, the above process stops.}

 {
(2) Suppose that $C_n =S_n$.  
By Lemma~\ref{lem:bdd_1}, the cycle is represented by
$[S_nA_n\cdots B_nS_n],$
and $S_n$ is not equal to $A_n$ and $B_n$.
Thus, $\overline{A_n}$ is not equal to $A_{n+1},B_{n+1},S_{n+1}$.
We have either $S_{n+1}=A_{n+1}$ or $S_{n+1}=B_{n+1}$, say $S_{n+1}=A_{n+1}$.
Thus, in $\G_{n+1}$, there is a cycle
$$[S_{n+1}\overline{A_n}\cdots\overline{B_n}B_{n+1}S_{n+1}].$$
By Lemma~\ref{lem:bdd_1} (\ref{lem:bdd_1(2)}), $\overline{A_n}$ is equal to $C_{n+1}$.
Thus, $C_{n+1}\ne S_{n+1}$.
}
\end{proof}



\begin{prop}\label{prop:adjacent}
 { (1)} Let $n \ge \initN+1$.
If there is a ball $D$ which is weakly adjacent to $S_n$ and different from $A_n, B_n, C_n, \text{and } S_n$, then
$\G_{n+1}$ has a cycle containing $\overline D$.    

 {(2)} Any cyclic quasi-Sturmian coloring is of bounded type.
\end{prop}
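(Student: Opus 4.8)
The plan is to treat the two parts separately, using the strong--adjacency bookkeeping of Lemmas~\ref{lem2.3_qs} and~\ref{lem2.4_qs} for part~(1), and the cycle--propagation Lemmas~\ref{lem:bdd_1} and~\ref{lem:bdd_2} for part~(2).

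For~(1), I would first record the consequences of the hypothesis. Since $D\neq A_n,B_n,S_n$ is weakly adjacent to $S_n$, Lemma~\ref{lem2.4_qs}(\ref{lem2.4_qs(1)}) upgrades this to strong adjacency of $S_n$ and $D$, and since $D\neq C_n$, Lemma~\ref{lem2.4_qs}(\ref{lem2.4_qs(2)}) gives $S_n\neq C_n$. Because $C_n=\underline{S_{n+1}}\neq S_n=\underline{A_{n+1}}=\underline{B_{n+1}}$, the three classes $A_{n+1},B_{n+1},S_{n+1}$ are pairwise distinct, and by Lemma~\ref{lem2.3_qs}(\ref{lem2.3_qs(3)}) both $A_{n+1}$ and $B_{n+1}$ are strongly adjacent to $S_{n+1}$. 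The extension $\overline{D}$ is well defined (as $D\neq S_n$) and differs from $A_{n+1},B_{n+1}$ (their restriction is $S_n\neq D$) and from $S_{n+1}$ (its restriction is $C_n\neq D$). The key step is to lift the edge joining $S_n$ and $D$ upward on \emph{both} sides of the split: since $S_n$ is strongly adjacent to $D$, every $n$-ball in the class $S_n$ has a $D$-neighbour, so choosing one $S_n$-ball whose $(n+1)$-extension is $A_{n+1}$ and another whose extension is $B_{n+1}$ (both extension types occur) yields edges $\overline{D}$--$A_{n+1}$ and $\overline{D}$--$B_{n+1}$ in $\G_{n+1}$. Together with the edges $A_{n+1}$--$S_{n+1}$ and $B_{n+1}$--$S_{n+1}$ this produces the $4$-cycle
$$\overline{D}\;-\;A_{n+1}\;-\;S_{n+1}\;-\;B_{n+1}\;-\;\overline{D}$$
in $\G_{n+1}$, which contains $\overline{D}$, as required.

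For~(2), I would argue by contradiction: suppose $\phi$ is cyclic but of unbounded type. By Theorem~\ref{Thm:TheQuotientGraphOfQS} the quotient graph $X$ is then a geodesic ray or a bi-infinite geodesic, so the exceptional finite part $G$ is empty or a single vertex; in particular any cycle of $\G_n$ has its lift in $X$ outside $G$, and Lemmas~\ref{lem:bdd_1} and~\ref{lem:bdd_2} apply at every relevant level. Starting from the given cycle in $\G_{n_0}$ with $n_0>\initN$, Lemma~\ref{lem:bdd_2} propagates a cycle to all larger levels and, after finitely many steps, places $C_m$ on the cycle with $C_m\neq S_m$. At such a level Lemma~\ref{lem:bdd_1} shows $S_m$ lies on the cycle and its only $\G_m$-neighbours lie in $\{A_m,B_m,C_m\}$; combined with Lemma~\ref{lem2.3_qs}(\ref{lem2.3_qs(2)}), which (since $\widetilde{G}$ is now trivial) bounds the degree of every non-special class by $2$, and with the connectedness of $\G_m$, a parity count on the degrees pins down the shape of $\G_m$: $S_m$ is the only possible vertex of degree $3$, so $\G_m$ is either a bare cycle or a lollipop whose unique branch vertex is $S_m$.

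The main obstacle is the final step: ruling out that such a cyclic $\G_m$ can persist for all large $m$ in the unbounded-type case. Here I would exploit the tension between the finiteness of $\G_m$ and the infiniteness of the quotient geodesic $X$. Concretely, I plan to track the orbit of a fixed vertex $u$ whose type set $\Lambda_u$ is infinite (such $u$ exists precisely because $\phi$ is of unbounded type): since $C_m\neq S_m$ throughout, whenever $m\in\Lambda_u$ the extension $[\B_{m+1}(u)]\in\{A_{m+1},B_{m+1}\}$ differs from $S_{m+1}=\overline{C_m}$, so $m+1\notin\Lambda_u$ and the class of $u$ can re-enter the special vertex only after travelling once around the finite cycle. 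Quantifying this forced return time against the growth of the cycle length $b(m)=m+c$ --- equivalently, showing that the sequence of colored balls read along $X$ would become eventually periodic and thereby collapse $X$ to a finite graph --- contradicts the unboundedness of the type and completes the argument. I expect this growth/periodicity estimate to be the technical heart of~(2); the reduction to a cycle or lollipop via Lemmas~\ref{lem:bdd_1} and~\ref{lem:bdd_2} is routine.
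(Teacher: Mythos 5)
Your part (1) is correct and is essentially the paper's argument: Lemma~\ref{lem2.4_qs} gives $S_n\neq C_n$ and the strong adjacency of $S_n$ and $D$, Lemma~\ref{lem2.3_qs}(\ref{lem2.3_qs(3)}) attaches $A_{n+1}$ and $B_{n+1}$ to $S_{n+1}$, and lifting the $S_n$--$D$ edge through both extensions of $S_n$ produces the cycle $[S_{n+1}A_{n+1}\overline{D}B_{n+1}S_{n+1}]$. Your extra checks that $\overline{D}$ is distinct from $A_{n+1},B_{n+1},S_{n+1}$ are a harmless (indeed welcome) addition.

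Part (2), however, has a genuine gap: you explicitly leave the ``technical heart'' as a plan. After reducing, via Lemmas~\ref{lem:bdd_1} and~\ref{lem:bdd_2}, to a level $n$ at which $C_n\neq S_n$ lies on the cycle, you propose to derive a contradiction with unboundedness by quantifying a ``forced return time'' of a vertex $u$ with infinite $\Lambda_u$ against the growth $b(m)=m+c$. This step is never carried out, and the observation you do make --- that $m\in\Lambda_u$ forces $m+1\notin\Lambda_u$ --- only rules out consecutive elements of $\Lambda_u$ and comes nowhere near bounding it; it is not clear how ``travelling once around the cycle'' translates into a statement about the fixed sequence $[\B_m(u)]$, and no periodicity estimate is supplied. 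The paper closes the argument with a much shorter, direct observation that you miss: writing the cycle at level $n$ as $[S_nC_nD\cdots A_nS_n]$, the extension $B_{n+1}$ of $S_n$ is distinct from $A_{n+1}$, $S_{n+1}$ and $C_{n+1}=\overline{D}$, hence lies \emph{off} the cycle of $\G_{n+1}$; since a colored $(m)$-ball on the cycle of $\G_m$ restricts to a ball on the cycle of $\G_{m-1}$, no extension of $B_{n+1}$ ever lies on a cycle, and since by Lemma~\ref{lem:bdd_1}(\ref{lem:bdd_1(1)}) the special ball always lies on the cycle, no extension of $B_{n+1}$ is ever special. Any vertex $u$ with $[\B_{n+1}(u)]=B_{n+1}$ therefore has finite $\Lambda_u$, and the coloring is of bounded type. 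You should replace your sketched growth/periodicity argument with this (or actually prove the estimate you defer); as written, part (2) is not a proof.
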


\begin{proof}
(1) By Lemma ~\ref{lem2.4_qs} (2), $S_n \ne C_n$. Thus, $S_{n+1} \ne A_{n+1}, B_{n+1}$. 
By Lemma ~\ref{lem2.3_qs} (3), both $A_{n+1}, B_{n+1}$ are strongly adjacent to $S_{n+1}$. 
On the other hand, by Lemma ~\ref{lem2.4_qs} (1), $D$ is strongly adjacent to $S_n$.
Thus, $\overline{D}$ is strongly adjacent to $A_{n+1}, B_{n+1}$. 
Hence, $[S_{n+1}A_{n+1}\overline{D}B_{n+1}S_{n+1}]$ is a cycle in $\G_{n+1}$.

(2) Let us prove that a cyclic quasi-Sturmian coloring is of bounded type.
 {
By Lemma~\ref{lem:bdd_2}, there is $n$ such that $S_n\ne C_n$ and $C_n$ is in a cycle
$$
[S_nC_nD\cdots E S_n]
$$ 
of $\G_n$ for some colored $n$-balls $D,E$.
Note that $D$ and $C_n$ are strongly adjacent to each other.
By Lemma ~\ref{lem:bdd_1} (\ref{lem:bdd_1(2)}), $E$ can be $A_n$ or $B_n$, say $A_n$.
Since $S_n\ne C_n$, $A_{n+1}$ and $B_{n+1}$ are stongly adjacent to $S_{n+1}$ respectively.
By Lemma~\ref{lem2.3_qs} (\ref{lem2.3_qs(1)}), $A_{n+1}$ is strongly adjacent to $\overline{A_n}$.
Thus, a cycle of $\G_{n+1}$ is represented by 
$$[A_{n+1}S_{n+1}\overline{D}\cdots\overline{A_n}A_{n+1}].$$
Since $\overline{D}$ cannot be $B_{n+1}$, $\overline{D}=C_{n+1}$.
Note that if a colored $(n+1)$-ball $F$ is contained in the cycle of $\G_{n+1}$, then $\underline{F}$ is in the cycle of $\G_n$. 
Now, we have $S_{n+1}\ne C_{n+1}$ and a cycle containing $C_{n+1}$.
Thus, we can apply above argument for all $m>n$.
}

 {
Since $B_{n+1}$ is not equal to $A_{n+1},S_{n+1},C_{n+1}$, we conclude that $B_{n+1}$ is outside of the cycle of $\G_{n+1}$.
If an extension of ${B_{n+1}}$ to the colored $m$-ball is in the cycle of $\G_m$, then 
$B_{n+1}$ is in the cycle of $\G_{n+1}$.
It is a contradiction.
Thus, any extension of $B_{n+1}$ is not special.
Therefore, for a vertex $u$ such that $[\B_{n+1}(u)]=B_{n+1}$, the vertex $u$ is of bounded type.
} 
\end{proof}

\subsection{Acyclic quasi-Sturmian colorings}

\begin{lem}
\label{A=S=C_less_than_N}
Let $\phi$ be an acyclic quasi-Sturmian coloring.
If $A_N = S_N =C_N$ for some $N > \initN+1$, then $A_n =S_n=C_n$ for all $\initN +1 \le n <N$.
\end{lem}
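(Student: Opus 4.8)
The plan is to argue by downward induction on $n$, proving the single-step implication that $A_{n+1}=S_{n+1}=C_{n+1}$ forces $A_n=S_n=C_n$ for every $n$ with $\initN+1\le n\le N-1$; iterating this from the hypothesis at level $N$ then yields the claim at all lower levels. Throughout I would use only the center/extension bookkeeping together with Lemma~\ref{lem2.3_qs} and Lemma~\ref{lem2.4_qs}, both of which hold for arbitrary quasi-Sturmian colorings.

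First I would establish the easy equality $C_n=S_n$ purely by passing to centers. Since $A_{n+1}$ is an extension of $S_n$ we have $\underline{A_{n+1}}=S_n$, while by definition $\underline{S_{n+1}}=C_n$. As restriction descends to equivalence classes, $A_{n+1}=S_{n+1}$ gives $S_n=\underline{A_{n+1}}=\underline{S_{n+1}}=C_n$, which is one of the two desired equalities.

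The substantive equality is $A_n=S_n$, and here I would argue by contradiction, assuming $A_n\ne S_n$. Then $A_n$ is not the unique special $n$-ball, so it has a well-defined unique extension $\overline{A_n}$; comparing centers ($\underline{\overline{A_n}}=A_n\ne S_n=\underline{A_{n+1}}=\underline{B_{n+1}}$) shows $\overline{A_n}\notin\{A_{n+1},B_{n+1}\}$, and since $A_{n+1}=S_{n+1}$ also $\overline{A_n}\ne S_{n+1}$. By Lemma~\ref{lem2.3_qs}(\ref{lem2.3_qs(1)}), $A_{n+1}$ is strongly adjacent to $A_n$, so a representative of $S_{n+1}=A_{n+1}$ has a neighbor whose $n$-ball is $A_n$ and hence whose $(n+1)$-ball is $\overline{A_n}$; thus $\overline{A_n}$ is weakly adjacent to $S_{n+1}$. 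Now I would apply Lemma~\ref{lem2.4_qs} at radius $n+1$ (legitimate since $n+1>\initN$) to the ball $D=\overline{A_n}$: it is distinct from $A_{n+1},B_{n+1},S_{n+1}$, is weakly adjacent to $S_{n+1}$, and satisfies $D\ne C_{n+1}$ because $C_{n+1}=S_{n+1}$ by the inductive hypothesis while $D\ne S_{n+1}$. Part~(\ref{lem2.4_qs(2)}) then forces $S_{n+1}\ne C_{n+1}$, contradicting $C_{n+1}=S_{n+1}$. Hence $A_n=S_n$, and together with $C_n=S_n$ this proves $A_n=S_n=C_n$, completing the inductive step.

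I expect the main obstacle to be this second half: the tempting attempt to deduce $A_n=S_n$ by descending within level $n$ fails, because $C_n=S_n$ alone does not control whether $S_n$ extends $S_{n-1}$ (single-step descent of the special chain is false in general). The decisive move is instead to climb one level: using that the special ball $S_{n+1}$ coincides with the labeled extension $A_{n+1}$, it inherits via Lemma~\ref{lem2.3_qs}(\ref{lem2.3_qs(1)}) a strong adjacency to $A_n$, which manufactures a neighbor $\overline{A_n}$ of $S_{n+1}$ lying outside $\{A_{n+1},B_{n+1},C_{n+1},S_{n+1}\}$; feeding this into Lemma~\ref{lem2.4_qs}(\ref{lem2.4_qs(2)}) is exactly what contradicts $C_{n+1}=S_{n+1}$. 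The only points needing care are the routine checks that $\overline{A_n}$ avoids all four distinguished classes and that the radius hypothesis $n+1>\initN$ of Lemma~\ref{lem2.4_qs} is met.
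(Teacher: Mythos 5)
Your proof is correct and follows essentially the same route as the paper's: both arguments get $C_n=S_n$ for free by passing to centers, and both derive $A_n=S_n$ by exhibiting $\overline{A_n}$ (via the strong adjacency of $A_{n+1}$ to $A_n$ from Lemma~\ref{lem2.3_qs}(\ref{lem2.3_qs(1)})) as a weak neighbor of the special ball lying outside $\{A,B,C,S\}$, which is forbidden. The only real difference is where the final contradiction comes from: the paper invokes Proposition~\ref{prop:adjacent}, and hence acyclicity, to rule out such a neighbor, whereas you feed $\overline{A_n}$ directly into Lemma~\ref{lem2.4_qs}(\ref{lem2.4_qs(2)}) to contradict $S_{n+1}=C_{n+1}$ — a slightly cleaner step that, as written, does not use the acyclicity hypothesis at all.
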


\begin{proof}
Suppose $A_N=S_N=C_N$ for some $N >\initN+1$. 
 {A colored $n$-ball which is weakly adjacent to $S_n$ is one of $A_n$, $C_n$, $B_n$, $S_n$ for $n\ge \initN+1$ by Proposition ~\ref{prop:adjacent}.}
Thus, $S_N$ is weakly adjacent to only $B_N$ and itself.
Since $|V\G_n| \ge 3$ for $n>1$,
there exists a colored $N$-ball D weakly adjacent to $B_N$. 
 {
The special $(N-1)$-ball $S_{N-1}$ is weakly adjacent to $\underline{D}$.
By $C_{N-1}=S_{N-1}$,we have that $\underline{D}=A_{N-1}$ or $\underline{D}=B_{N-1}$.
If $\underline{D}=A_{N-1}$, then $A_{N}=S_N$ is not weakly adjacent to $A_{N-1}$.}
Hence, $\underline{D}=B_{N-1}$ and $A_{N-1}=C_{N-1}=S_{N-1}$. 
By the same argument, $A_n=S_n=C_n$ for all $\initN+1 \le n < N$. 
\end{proof}

We choose $A_n$ as 
$S_n = C_n = A_n$ if there exists $n >  N_0$ such that $S_n = C_n$ is identical to $A_n$ or $B_n$.
Define
$$
\capitalK = \min\{ n > \initN :  A_n , S_n , C_n  \text{ are not all identical} \}
$$
as in \cite{KL2}.
Note that $K$ may be infinity. 

For an acyclic  quasi-Sturmian coloring, for each $n \ge \capitalK$,
neither $A_{n}, S_{n}, C_{n}$ nor $B_{n}, S_{n}, C_{n}$ are identical.
Therefore, 
the colored $n$-balls $S_n$, $A_n$, $B_n$, $C_n$ satisfy one of the following conditions. 
\begin{itemize}
\item[(I)] $S_n, C_n$ are distinct, but  one of $S_n$, $C_n$ is identical to $A_n$ or $B_n$.
%
%

\item[(II)] $S_n, A_n, B_n, C_n$ are all distinct. 

\item[(III)] $S_n, A_n, B_n$ are distinct, but $S_n = C_n$. 
\end{itemize}

Case (I) is divided into three subcases:
\begin{itemize}
\item[(I-a)] $A_n, B_n, S_n$ are distinct and $C_n = A_n$ or $B_n$,

\item[(I-b)] $A_n, B_n, C_n$ are distinct and $S_n = A_n$ or $B_n$,

\item[(I-c)] $A_n= S_n$, $B_n =C_n$ are distinct,
\end{itemize}

By Lemma~\ref{lem:bdd_1} we deduce that  $S_n$ is a vertex of degree 3 in $\G_n$ for Case (II),
But for Case (I) and (III), $\G_n$ is a linear graph and $S_n$ is of degree 1 or 2.

\begin{prop}\label{graph_develop}
Suppose that $\G_n$ corresponds to Case (I).
Then $S_n$ is a vertex of degree 2 or 1 in $\G$. Thus $\G_n$ is a linear graph.
Let $m$ be the number of vertices connected to $S_n$ through $C_n$. 
Note that $m \ge 1$ since $C_n$ is not identical to $S_n$.
Then we have
$\G_{n+k}$ belongs to Case (II) for all $0 < k < m$ and either 
$\G_{n+m}$ belongs to Case (I) or 
$\G_{n+m}$ belongs to Case (III) and $\G_{n+m+1}$ belongs to Case (I).
\end{prop}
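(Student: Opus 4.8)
The plan is to track how the four distinguished balls $S_n, A_n, B_n, C_n$ evolve as $n$ increases, using the strong-adjacency machinery of Lemma~\ref{lem2.3_qs} together with the structure of the special ball established in Lemma~\ref{lem:bdd_1} and Proposition~\ref{prop:adjacent}. First I would record the starting configuration: in Case (I) the balls $S_n$ and $C_n$ are distinct, while one of $S_n, C_n$ coincides with $A_n$ or $B_n$. By Proposition~\ref{prop:adjacent}(1) no ball outside $\{A_n, B_n, C_n, S_n\}$ is weakly adjacent to $S_n$ (since the coloring is acyclic, the conclusion of that proposition would force a cycle), so every neighbor of $S_n$ in $\G_n$ lies among $A_n, B_n, C_n$. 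Combined with Lemma~\ref{lem2.3_qs}(\ref{lem2.3_qs(2)}), which bounds the degree of every non-special vertex by $2$, this immediately gives that $\G_n$ is linear with $S_n$ of degree $1$ or $2$.

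The main engine is the passage from $\G_n$ to $\G_{n+1}$. Because we are in Case (I), $S_n \neq C_n$, so $S_{n+1} \neq A_{n+1}, B_{n+1}$, and by Lemma~\ref{lem2.3_qs}(\ref{lem2.3_qs(3)}) both $A_{n+1}$ and $B_{n+1}$ are strongly adjacent to $S_{n+1}$; thus $S_{n+1}$ has degree at least $3$, forcing Case (II). I would then analyze the linear segment of $\G_n$ running from $S_n$ out through $C_n$: its $m$ vertices are exactly the balls $C_n, \underline{}\cdots$ up to the free endpoint. As $n$ increments, each non-special ball $D$ on this segment has a \emph{unique} extension $\overline D$ (Lemma~\ref{lem2.3_qs}(\ref{lem2.3_qs(2)}) and the remark that non-special balls extend uniquely), so the segment shrinks by exactly one vertex at each step while the branch through $S_{n+1}$ acquires its new length-$n{+}1$ extensions. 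The key bookkeeping is that the endpoint ball—which has only one neighbor—absorbs the merging that occurs when two distinct $n$-balls of class $S_n$ extend differently; this is precisely the mechanism by which $C_{n+i}$ stays distinct from $S_{n+i}$ for $0 < i < m$ and the graph remains a tripod (Case (II)).

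After $m$ steps the segment through $C$ is exhausted. At $n+m$ the ball that was the far endpoint now coincides with (an extension of) $S_{n+m}$, so one checks whether $C_{n+m}$ has returned to equal $S_{n+m}$ or again differs from it. If $C_{n+m} \neq S_{n+m}$ and one of them equals $A_{n+m}$ or $B_{n+m}$, we are back in Case (I). Otherwise $C_{n+m} = S_{n+m}$, which is Case (III); here $S_{n+m}$ has degree $2$, and by Lemma~\ref{lem:bdd_2}(2)-type reasoning (adapted to the acyclic setting via Lemma~\ref{A=S=C_less_than_N}) the next step $\G_{n+m+1}$ must split $S$ into its two extensions $A_{n+m+1}, B_{n+m+1}$ with the far endpoint becoming the new $C_{n+m+1}$, landing again in Case (I). I expect the main obstacle to be the careful combinatorial verification that the counting index $m$ is exactly the number of increments spent in Case (II)—i.e. that no ball on the $S$-to-$C$ segment prematurely becomes special—and that the endpoint behavior cleanly produces the (III)$\to$(I) return rather than an uncontrolled branching; this requires invoking acyclicity to rule out the cycle that Proposition~\ref{prop:adjacent}(1) would otherwise create, and tracking the unique-extension property along the entire shrinking segment.
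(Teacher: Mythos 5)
Your plan follows the same route as the paper: establish linearity of $\G_n$ in Case (I) by confining the neighbours of $S_n$ to $\{A_n,B_n,C_n\}$ via acyclicity and Proposition~\ref{prop:adjacent}, then track the segment from $S_n$ through $C_n$, which loses one vertex per step while the graph is a tripod, and finally treat the endpoint alternatives, Case (I) versus Case (III) followed by Case (I). The endpoint analysis, including the (III)$\to$(I) step where $S_{n+m+1}$ equals one of the two extensions and the neighbouring extension becomes $C_{n+m+1}$, agrees with the paper's proof.

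However, the justification you give for the transition into Case (II) does not hold up as written. From ``$A_{n+1}$ and $B_{n+1}$ are strongly adjacent to $S_{n+1}$'' you conclude that $S_{n+1}$ has degree at least $3$; two distinct neighbours only give degree $2$, and the existence of a third neighbour distinct from $A_{n+1}$, $B_{n+1}$ is precisely what Case (II) asserts, so the inference is circular. It is moreover false when $m=1$: there $C_n$ has degree $1$ in $\G_n$, $S_{n+1}$ is adjacent only to $A_{n+1}$ and $B_{n+1}$, and $\G_{n+1}$ falls into Case (I-a) or Case (III), not Case (II). The missing step, which is the heart of the paper's argument, is this: when $C_n$ has a neighbour $D\neq S_n$ in $\G_n$, the unique extension $\overline{D}$ is weakly adjacent to $\overline{C_n}=S_{n+1}$ and is distinct from $S_{n+1}$, $A_{n+1}$, $B_{n+1}$ (their restrictions to radius $n$ are $C_n$, $S_n$, $S_n$, all different from $D$), so acyclicity together with Proposition~\ref{prop:adjacent} forces $\overline{D}=C_{n+1}$. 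This single identification yields Case (II) and the decrease of the segment length by one; your sentence about the endpoint ball ``absorbing the merging'' does not describe this mechanism and should be replaced by it. With that repair your proposal coincides with the paper's proof.
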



\begin{proof}
If $S_n$ and $C_n$ are distinct, then $\G_n$ belongs to Case (I)  or (II).
We deduce that $S_{n+1}$, $A_{n+1}$, $B_{n+1}$ are distinct.
If $C_n$ is of degree 2, then there exists $D$ neighboring $C_n$ which is not $S_n$.
Thus $\overline D$ is weakly adjacent to $S_{n+1}$ but different from $S_{n+1}, A_{n+1}, B_{n+1}$, 
which is implied that $\overline D = C_{n+1}$, which corresponds Case (II).
In this case the number of vertices connected to $S_{n+1}$ through $C_{n+1}$ decrease by 1.

If $C_n$ is of degree 1, then $m =1$. In this case, $S_{n+1}$ is connected to only $A_{n+1}, B_{n+1}$ two extensions of $S_n$ in $\G_{n+1}$, which implies that 
$C_{n+1} = S_{n+1}$, i.e., Case (III) or $C_{n+1} = A_{n+1}$ or $B_{n+1}$, i.e., Case (I-a).

If $\G_n$ belongs to Case (III), then $S_n = C_n$, thus we have either $S_{n+1} = A_{n+1}$ or $S_{n+1} = B_{n+1}$, say $S_{n+1} = A_{n+1}$.
Since $\overline{A_n}$ is weakly adjacent to $A_{n+1} = S_{n+1}$ and $\overline{A_n}$ cannot be $A_{n+1}$ nor $B_{n+1}$, we deduce that $C_{n+1} = \overline{A_n}$.
Therefore, $\G_{n+1}$ belongs to the Case (I-b).
 
We remark that Case (I-c) can happen only for $n = \capitalK$.
\end{proof}

We denote by $(n_k)$ the subsequence for which $\G_{n_k}$ is of Case (I).
The evolution of $\G_n$ from $n = n_k$ to $n = n_{k+1}$
 is shown in Figure~\ref{cycle1}.

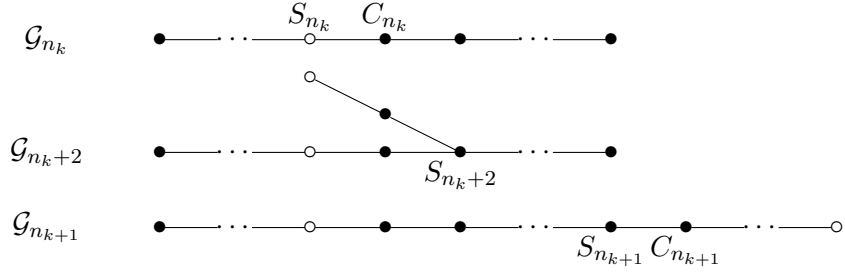
\begin{figure}
\begin{tikzpicture}[every loop/.style={}]
  \tikzstyle{every node}=[inner sep=-1pt]
  \node (4) at (-3.5,1.5) {$\mathcal{G}_{n_k}$};
  \node (0) at (-2,1.5) {$\bullet$};
  \node (1) at (-1,1.5) {$\cdots$};
  \node (3) at  (0,1.5) {$\circ$} node [above=4pt] at (0,1.5) {$S_{n_k}$};
  \node (6) at  (1,1.5) {$\bullet$} node [above=4pt] at (1,1.5) {$C_{n_k}$};
  \node (7) at  (2,1.5) {$\bullet$};
  \node (8) at  (3,1.5) {$\cdots$};
  \node (9) at  (4,1.5) {$\bullet$};

\tikzstyle{every loop}=   [-, shorten >=.5pt]

  \path[-] 
	(0)  edge (1)
	(1)  edge (3)
	(3)  edge (6)
	(6)  edge (7)
	(7)  edge (8)
	(8)  edge (9);
	
  \node (14) at (-3.5,0) {$\mathcal{G}_{n_k+2}$};
  \node (10) at (-2,0) {$\bullet$};
  \node (11) at (-1,0) {$\cdots$};
  \node (13) at  (0,0) {$\circ$};
  \node (12) at  (0,1) {$\circ$};
  \node (15) at  (1,.5) {$\bullet$}; 
  \node (16) at  (1,0) {$\bullet$}; 
  \node (17) at  (2,0) {$\bullet$} node [below=4pt] at (2,0) {$S_{n_k+2}$};
  \node (18) at  (3,0) {$\cdots$};
  \node (19) at  (4,0) {$\bullet$};

  \path[-] 
	(10)  edge (11)
	(12)  edge (15)
	(15)  edge (17)
	(11)  edge (13)
	(13)  edge (16)
	(16)  edge (17)
	(17)  edge (18)
	(18)  edge (19);
	
  \node (24) at (-3.5,-1) {$\mathcal{G}_{n_{k+1}}$};
  \node (20) at (-2,-1) {$\bullet$};
  \node (21) at (-1,-1) {$\cdots$};
  \node (23) at  (0,-1) {$\circ$};
  \node (26) at  (1,-1) {$\bullet$};
  \node (27) at  (2,-1) {$\bullet$};
  \node (28) at  (3,-1) {$\cdots$};
  \node (29) at  (4,-1) {$\bullet$} node [below=4pt] at (4,-1) {$S_{n_{k+1}}$};
  \node (30) at  (5,-1) {$\bullet$} node [below=4pt] at (5,-1) {$C_{n_{k+1}}$};
  \node (31) at  (6,-1) {$\cdots$};
  \node (32) at  (7,-1) {$\circ$};

\tikzstyle{every loop}=   [-, shorten >=.5pt]

  \path[-] 
	(20)  edge  (21)
	(21)  edge  (23)
	(23)  edge (26)
	(26)  edge (27)
	(27)  edge (28)
	(28)  edge (29)
	(29)  edge (30)
	(30)  edge (31)
	(31)  edge (32);
\end{tikzpicture}
\caption{The evolution of $\G_{n_k}$ along the path (I) $\to$ (II) $\to \cdots \to $ (II)  $\to$ (I). 
The vertex $\circ$ represents either $S_{n_k}$ or the extensions of $S_{n_k}$. }\label{cycle1}
\end{figure}






Compare with Sturmian words (see Figure~\ref{Rauzygraph}): there are infinitely many $n$'s such that 
the Rauzy graph has disjoint two cycles starting from a common bi-special word (see e.g. \cite{Ab}).
It corresponds to the factor graph $\G_n$ belongs to Case (I).  

\section{Quasi-Sturmian colorings of bounded type}\label{Sec_bounded}

In this section, we investigate a necessary and sufficient condition for a quotient graph to be a quotient graph of a quasi-Sturmian coloring of bounded type.

Let $x$ be a vertex of the quotient graph $X$.
For the two lifts $\tilde{x}$ and $\tilde{x}'$ of $x$, $[\B_n(\tilde{x})]=[\B_n(\tilde{x}')]$ for all $n$.
Then, $\tau(\tilde{x})=\tau(\tilde{x}')$.
By abuse of notation, define $[\B_n(x)]$ as a class $[\B_n(\tilde{x})]$.
Define the maximal type $\tau(x)$ of $x$ as $\tau(\tilde{x})$.

Recall the examples in Section~\ref{sec_Quotient graphs of quasi-Sturmian colorings}.
Let $\cX=(X,i)$ be the quotient graph for each of them.
We obtain a periodic edge-indexed subgraph $X'$ of $X$ by removing a finite subgraph $G$  {in Proposition \ref{Prop:TheQuotientGraphOfBoundedType}}.
Then, a lift of $(X',i|_{EX'})$ can be extended to a periodic coloring of a tree.
It is natural to guess that the property holds for every quasi-Sturmian coloring.

From now on, let $(\cT,\phi)$ be a quasi-Sturmian coloring of bounded type.
By Proposition~\ref{Prop:TheQuotientGraphOfBoundedType}, the quotient graph $X$ of $(\cT,\phi)$ is the graph in Figure~\ref{Figure:TheQuotientGraphOfBoundedType}.
Let $\widetilde{G}$ be the union of lifts of $G$.
A connected component of $\cT-\widetilde{G}$ is a lift of $(X-G,i|_{E(X-G)})$.
Thus, all connected components of $\cT-\widetilde{G}$ are equivalent to each other.
Let $Y$ be a connected component of $\cT-\widetilde{G}$.

\begin{lem}\label{Lem:OnY,UniquelyExtended}
If $u, v$ are vertices of $Y$ with $[\B_{\maxNN}(u)]=[\B_{\maxNN}(v)]$, where $\maxNN$ is as in (\ref{Eq:N_1}), 
then we have $[\B_{\maxNN+1}(u)]=[\B_{\maxNN+1}(v)]$. 
\end{lem}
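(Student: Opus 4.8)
The plan is to reduce the statement to a single troublesome configuration and then separate the two extensions of the special ball according to maximal type.

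First I would dispose of the easy case. Since $\maxNN\ge\initN$, the complexity $b$ is strictly increasing at $\maxNN$ and there is a unique special $\maxNN$-ball $S_{\maxNN}$; every other colored $\maxNN$-ball is non-special and hence has a unique extension to a colored $(\maxNN+1)$-ball. Thus if the common ball $[\B_{\maxNN}(u)]=[\B_{\maxNN}(v)]$ is not special, then $[\B_{\maxNN+1}(u)]=[\B_{\maxNN+1}(v)]$ at once, and we are done. So assume $[\B_{\maxNN}(u)]=[\B_{\maxNN}(v)]=S_{\maxNN}$. A counting argument---the total number of $(\maxNN+1)$-balls equals $b(\maxNN+1)=b(\maxNN)+1$, with each non-special $\maxNN$-ball contributing exactly one extension---shows that $S_{\maxNN}$ has exactly two extensions, which I call $A_{\maxNN+1}$ and $B_{\maxNN+1}$.

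The heart of the argument is that inside $Y$ only one of these two extensions can occur. I would first note that $\widetilde{G}$ is precisely the set of lifts of vertices of maximal type $\le\maxNN$, so every vertex of $Y$ has maximal type $>\maxNN$; in particular $\tau(u),\tau(v)\ge\maxNN+1$. Next I produce a vertex $w_0$ of maximal type exactly $\maxNN$: starting from $u$ and repeatedly invoking Lemma~\ref{Lem:TheDiffereceOfMaximalTypesBetweenAdjacentVerticesIsLessThan2}(3) (each vertex of type $m$ with $\initN\le m$ that is not of minimal type has a neighbor of type $m-1$, and indeed $\maxNN+1>\maxNN\ge\min_{x}\tau(x)$), one descends in type one step at a time until reaching type $\maxNN$. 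Because $\tau(w_0)=\maxNN$, we have $\maxNN\in\Lambda_{w_0}$, hence $[\B_{\maxNN}(w_0)]=S_{\maxNN}$, whereas $n\notin\Lambda_{w_0}$ for every $n>\maxNN$, so each ball $[\B_n(w_0)]$ with $n>\maxNN$ is non-special. Set $B_{\maxNN+1}:=[\B_{\maxNN+1}(w_0)]$.

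Finally I would show that $B_{\maxNN+1}$ is never realized in $Y$. If $[\B_{\maxNN+1}(w')]=B_{\maxNN+1}$, then, since each $[\B_n(w_0)]$ with $n>\maxNN$ is non-special and therefore uniquely extendable, one propagates upward to obtain $[\B_n(w')]=[\B_n(w_0)]$ for all $n\ge\maxNN+1$, and restriction gives the same equality for all $n\le\maxNN+1$; hence $w'$ lies in the same class as $w_0$ and $\tau(w')=\maxNN$, so $w'\in\widetilde{G}$ and $w'\notin Y$. Consequently $u,v\in Y$, having type $>\maxNN$, cannot realize $B_{\maxNN+1}$; as $S_{\maxNN}$ admits only the two extensions $A_{\maxNN+1},B_{\maxNN+1}$, both $u$ and $v$ must realize $A_{\maxNN+1}$, whence $[\B_{\maxNN+1}(u)]=[\B_{\maxNN+1}(v)]$. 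The main obstacle is precisely the middle step: guaranteeing a vertex of maximal type exactly $\maxNN$ and correctly identifying that the extension it produces is the one trapped in $\widetilde{G}$; this is where the structural input truly enters, via Lemma~\ref{Lem:TheDiffereceOfMaximalTypesBetweenAdjacentVerticesIsLessThan2} together with the choice $\maxNN=\max\{\initN,\min_{x}\tau(x)\}$, while everything else is bookkeeping about the extensions of one special ball.
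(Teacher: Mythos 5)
Your proof is correct and follows essentially the same route as the paper's: reduce to the case where the common $\maxNNn$-ball is the special one, observe that all vertices of maximal type $\maxNNn$ lie in one class (Corollary~\ref{Cor:TheSameMaximalTypeImpliesTheSameClass}) and hence realize a single one of the two extensions, and conclude that vertices of $Y$, having maximal type greater than $\maxNN$, must all realize the other extension. Your version is in fact slightly more complete than the paper's, since you explicitly justify both the existence of a vertex of maximal type exactly $\maxNN$ and the converse step that any vertex realizing that distinguished extension must itself have maximal type $\maxNN$ (via unique extendability of non-special balls), points the paper leaves implicit.
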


\begin{proof}
It suffices to consider the case of $[\B_{\maxNN}(u)]= S_{\maxNN}$.
Every vertex of maximal type $\maxNN$ is the center of either $A_{\maxNN+1}$ or $B_{\maxNN+1}$, say  $A_{\maxNN+1}$. 
Since vertices of $X - G$ are of maximal type bigger than $\maxNN$,
if $u$ is a vertex of $Y$ and $[\B_{\maxNN}(u)]= S_{\maxNN}$, then $[\B_{\maxNN+1}(u)]= B_{\maxNN+1}$.
\end{proof}

We define an edge-indexed graph  {$\cZ = (Z, i_Z)$} as follows :  {the vertices of $Z$ are of the form} $[\B_{\maxNN}(u)]$ for  {a vertex} $u$ in $Y$ or $X - G$, and  {any two vertices $D, E$ of $Z$ are adjacent if $D$ and $E$ are weakly adjacent.}
The index  {$i_Z(D,E)$ is the number of $E$ which are adjacent to $D$.}
The indices are well-defined by Lemma~\ref{Lem:OnY,UniquelyExtended}.
Since any vertex in $X - G$ is adjacent to at most two vertices besides itself, the graph $Z$ is a line segment or a cycle.

\begin{lem}\label{lem_periodic_ext}
A restriction of $\phi$ on any connected component of $\cT-\widetilde{G}$ has a periodic extension to $\cT$.
\end{lem}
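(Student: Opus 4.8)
The plan is to build the periodic extension as the lift to $\cT$ of the finite edge-indexed graph $\cZ=(Z,i_Z)$ constructed above, matched to $\phi$ along $Y$. By Lemma~\ref{Lem:OnY,UniquelyExtended}, for $u\in Y$ the class $[\B_{\maxNN}(u)]$ determines $[\B_{\maxNN+1}(u)]$, and hence the classes $[\B_{\maxNN}(w)]$ of all neighbors $w$ of $u$; so the map $\Phi_Y\colon u\mapsto[\B_{\maxNN}(u)]$ sends $Y$ to $Z$ in an index-preserving way, the multiset of neighbor-classes of $u$ being exactly that prescribed by $i_Z([\B_{\maxNN}(u)],\cdot)$. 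Writing $c(D)$ for the color of the center of a representative of $D\in VZ$, one has $\phi(u)=c([\B_{\maxNN}(u)])$ on $Y$. It therefore suffices to extend $\Phi_Y$ to an index-preserving map $\Phi\colon V\cT\to VZ$ and put $\psi=c\circ\Phi$: since $Z$ is finite (a segment or a cycle), the quotient of $(\cT,\psi)$ is finite and $\psi$ is periodic.

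Granting the admissibility checked in the last paragraph, the extension is carried out by unfolding $\cZ$ over the tree $\cT$. Root $\cT$ at the vertex $r$ of $Y$ adjacent to $\widetilde G$, set $\Phi(r)=[\B_{\maxNN}(r)]$, and proceed outward: once $\Phi(w)=D$ is known, the $q$ neighbors of $w$ (where $q$ denotes the common degree of the regular tree $\cT$) receive classes according to the multiset $i_Z(D,\cdot)$, the already-assigned parent filling one of the slots, which is available since its class is adjacent to $D$ in $Z$. Because $\cT$ is simply connected there is no cycle condition, so $\Phi$ is well-defined. Both $\Phi|_Y$ and $\Phi_Y$ are unfoldings of $\cZ$ from $r$ with the same root value, and an unfolding over a tree is determined by its root value; hence they agree on $VY$, giving $\psi(u)=c([\B_{\maxNN}(u)])=\phi(u)$ for all $u\in Y$. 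The extra neighbors that $\psi$ attaches to $r$ occupy the directions formerly leading into $\widetilde G$, which lie outside $Y$ and so do not affect the restriction.

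The step I expect to be the crux is checking that $\cZ$ is \emph{admissible}, i.e. that $\sum_{E\in VZ}i_Z(D,E)$ equals the degree $q$ for every $D\in VZ$; this is what guarantees that the unfolding reproduces the full regular tree while staying inside the finite graph $Z$. For any class occurring at an \emph{interior} vertex of $Y$ (one all of whose neighbors again lie in $Y$) the sum is visibly $q$, and by the well-definedness of the indices this value propagates to every representative of that class. The difficulty is concentrated at the boundary class $D_r=[\B_{\maxNN}(r)]$, where under $\phi$ one neighbor-direction leads into $\widetilde G$: one must show $D_r$ also occurs at an interior vertex, using that $Y$ is infinite while $VZ$ is finite, so that the $\maxNN$-ball classes recur along the ray $X-G$. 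Once $D_r$ is realized interiorly, Lemma~\ref{Lem:OnY,UniquelyExtended} forces the $\widetilde G$-directed neighbor-class at $r$ to coincide with an interior neighbor-class, hence to lie in $Z$, and the index sum closes up to $q$; this yields the admissibility and completes the construction of a periodic extension of $\phi|_Y$.
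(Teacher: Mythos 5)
Your proposal is correct and follows essentially the same route as the paper: both build the extension by unfolding the finite edge-indexed graph $\cZ$ over $\cT$ starting from a vertex of $Y$, use Lemma~\ref{Lem:OnY,UniquelyExtended} to see that the assignment agrees with $\phi$ on $Y$ and that the indices are well defined, and conclude periodicity from the finiteness of $Z$. Your explicitly isolated ``admissibility'' check at the boundary class is exactly the point the paper handles by producing, via the infinitude of $Y$, an interior vertex $v$ with $\B_{\maxNN+1}(v)\subset Y$ realizing the same $(\maxNN+1)$-ball.
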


\begin{proof}
Let $u$ be the vertex of $Y$. 
Define a coloring $\psi_k$ on $\B_{k}(u)$ with  {the alphabet $VZ = \{ [ \B_{\maxNN}(v)] \, | \, v \in Y \}$  recursively}: 
Put $\psi_0 (u) = [\B_{\maxNN}(u)] \in VZ$. 
Define $\psi_{k+1} (v) = \psi_k (v)$ for $v \in \B_{k}(u)$.
Choose $w \in V\cT$ with $\mathbf d (u,w) = k$ and let $w_\alpha$ ($\alpha = 0, \dots d-1$)  be  the neighboring vertices of $w$ with $\mathbf d (u,w_\alpha) = k+1$ for $\alpha \ge 1$ and $\mathbf d (u,w_0) = k-1$.
 {We define $\psi_{k+1} (w_\alpha)$ for $\alpha \ge 1$ in the following ways.}

If $w \notin Y$, then $w_\alpha \notin Y$ for all $\alpha \ge 1$.
 {Let $D_0 = \psi_{k}(w_0)$ and $D_j$ be satisfying  
$i_Z( \psi_{k}(w), D_j) > 0$ with $j = 0, 1, 2$ or $j = 0,1$.}
We assign $\psi_{k+1} (w_\alpha)$  { for $1 \le \alpha \le d-1$ as 
$$ 
\psi_{k+1}(w_\alpha) =  D_\ell \ \text{ for } \ 
\sum_{j=0}^{\ell-1} i_Z(\psi_{k}(w), D_j ) \le \alpha \le \sum_{j=0}^{\ell} i_Z(\psi_{k}(w), D_j ) -1.
$$
Then we have}
\begin{equation}\label{index_formula}
i_Z(\psi_{k+1}(w), D) = \# \{ 0 \le \alpha \le d \, |  \, \psi_{k+1}(w_\alpha) = D \}
\end{equation}
for each $D \in VZ$.

If $w \in Y$, then
we put $\psi_{k+1}(w_\alpha) =  [\B_{\maxNN}(w_\alpha)]$ for all $\alpha \ge 1$.
 {Using the fact that $Y$ is an infinite subgraph of $T$, 
Lemma~\ref{Lem:OnY,UniquelyExtended} implies that there exists a vertex $v$ such that 
$\B_{\maxNN+1}(v) \subset Y$ and $[\B_{\maxNN+1}(v) ] = [\B_{\maxNN+1}(w)]$, 
thus $\psi_{k+1}(w_\alpha) = [\B_{\maxNN}(w_\alpha)] \in VZ$}
 and \eqref{index_formula} is satisfied.

Since $\psi_{k+\ell} |_{\B_{k}(u)} = \psi_k$ for $\ell \ge 1$, the coloring $\psi = \lim_{k \to \infty} \psi_k$ on $\cT$ with alphabet $VZ$ exists. By \eqref{index_formula}, we deduce that  {$\cZ$} is the quotient graph of $\psi$.
Since $\psi (u) = [\B_{\maxNN}(u)]$ on $Y$, by the coloring which gives the color of the center of $\psi(u)$, 
we complete the proof.
\end{proof}

\begin{thm}\label{Thm:AnIffConditionOfTheQuotientGraphOfBoundedType}
Let $\cX=(X,i)$ be the quotient graph of a coloring $(\cT,\phi)$.
The following statements are equivalent.
\begin{enumerate}
\item The coloring $\phi$ is a quasi-Sturmian coloring of bounded type.
\item There is a finite connected subgraph $G$ of $X$ such that $X-G$ is a connected infinite ray and any connected component of $\cT-\widetilde{G}$ has a periodic extension to $\cT$ where $\widetilde{G}$ is the union of lifts of $G$.
\end{enumerate}
\end{thm}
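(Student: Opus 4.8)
The statement is an equivalence, so I would prove the two implications separately; $(1)\Rightarrow(2)$ is essentially bookkeeping, while $(2)\Rightarrow(1)$ carries the real content. For $(1)\Rightarrow(2)$, Proposition~\ref{Prop:TheQuotientGraphOfBoundedType} already shows that the quotient graph of a quasi-Sturmian coloring of bounded type is the union of the finite graph $G$ (the vertices of maximal type at most $\maxNN$, a finite set since the alphabet is finite) and a geodesic ray that is linear from the vertex of maximal type $\maxNN+1$; thus $X-G$ is a connected infinite ray, and $G$ is finite and connected because $X$ is connected and $X-G$ is a ray attached at one end. The remaining clause is exactly Lemma~\ref{lem_periodic_ext}, which provides the periodic extension of each connected component of $\cT-\widetilde G$.

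For $(2)\Rightarrow(1)$ I would first establish $b(n)=n+c$ and then bounded type. Since $X-G$ is an infinite ray, $X$ is infinite, so $\phi$ is non-periodic and $b(n)$ is strictly increasing by the dichotomy of Theorem 2.7 in \cite{KL}; in particular $b(n)-n$ is non-decreasing. To bound it from above, fix a component $Y$ of $\cT-\widetilde G$ with its periodic extension $\psi$, whose complexity $b_\psi$ is bounded by some $P$. Writing $b(n)=\#\{[\B_n(x)]:x\in VX\}$ and labelling the ray vertices $y_1,y_2,\dots$ by distance to $G$, I note that for $j>n$ the ball $\B_n(\tilde y_j)$ lies inside $Y$, so its class agrees with a $\psi$-class; by periodicity of $\psi$ only finitely many such classes occur, while the vertices $y_1,\dots,y_n$ and the vertices of $G$ contribute at most $n+|VG|$ further classes. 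Hence $b(n)\le n+|VG|+P$, and a non-decreasing integer sequence $b(n)-n$ bounded above is eventually constant, which yields $b(n)=n+c$.

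It remains to prove bounded type, which I expect to be the main obstacle. Now that $\phi$ is known to be quasi-Sturmian, for $n\ge\initN$ there is a unique special $n$-ball $S_n$, and a vertex $u$ has unbounded type precisely when $[\B_n(u)]=S_n$ for infinitely many $n$. Using the periodic extension once more, two vertices of $Y$ whose $n$-balls remain inside $Y$ and agree under $\psi$ already agree under $\phi$, so a ball can be special only at the frontier where periodically shifted copies begin to differ; as $n$ grows this frontier, hence the center of $S_n$, is forced to depth $\approx n$ inside $Y$, and each fixed vertex is then special for only finitely many radii. The delicate point here is the claim that once the radius exceeds the depth of its center a ball ``sees past'' the finite part $\widetilde G$ and thereby determines the vertex class: since $\widetilde G$ has many lifts and the periodic part may locally resemble $G$, the careful step is to show that two non-equivalent vertices cannot share arbitrarily large $n$-balls unless their depths coincide, forcing the special locus outward and hence $\Lambda_u$ finite for every $u$. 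By comparison, the linear upper bound on $b(n)$ is comparatively routine once the periodic extension is used to control the balls contained in $Y$.
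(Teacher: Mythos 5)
Your treatment of $(1)\Rightarrow(2)$ and of the complexity formula in $(2)\Rightarrow(1)$ is sound. For the latter you count directly: balls centered at depth $>n$ in $\cT-\widetilde{G}$ are balls of the periodic extension, hence at most $P$ classes, while the at most $n+|VG|$ shallow quotient vertices contribute at most one class each, giving $b_\phi(n)\le n+|VG|+P$; combined with strict monotonicity this yields $b_\phi(n)=n+c$ eventually. The paper reaches the same conclusion by a different device: it recolors the lifts of $G$ with a fresh alphabet $VG$ (one new color per quotient vertex of $G$), producing an auxiliary coloring $\psi$ with $b_\phi(n)\le b_\psi(n)$ and computes $b_\psi(n)=n+|\mathcal A|+|VG|$ exactly, the unique $\psi$-special $n$-ball being the one centered at distance $n+1$ from $G$. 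Both routes are valid; the paper's recoloring buys an exact formula and a clean description of where $\psi$-special balls live, yours is more elementary.

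The genuine gap is the one you yourself flag: bounded type is asserted but not proved. Your sketch claims that the center of $S_n$ is ``forced to depth $\approx n$,'' but the argument you give only shows that two vertices both at depth $>n+1$ with equal $n$-balls have equal $(n+1)$-balls (for $n$ past the point where the periodic extension has no special balls). That confines \emph{one witness} of the splitting pair to depth $\le n+1$; it does not prevent the $\phi$-class $S_n$ from also containing shallow representatives, because a single $\phi$-class may merge several classes of the refined (periodic or $G$-marked) picture, and nothing in your sketch rules out a fixed vertex of $\widetilde{G}$ sitting inside such a merging class and being special for infinitely many $n$. So ``each fixed vertex is special for only finitely many radii'' does not follow from what you wrote. (The paper's own proof is in fact also silent on this point, so you have correctly located the weak spot.) The clean way to close it is to argue by contradiction via Proposition~\ref{unbdd_unif}: if $\phi$ were of unbounded type it would be uniformly recurrent, so for every $n$ each colored $n$-ball would occur inside $\B_{R_\phi(n)}(x)$ for a vertex $x$ at depth $>R_\phi(n)+n$; every such occurrence is centered at depth $>n$, hence contained in a component of $\cT-\widetilde{G}$ and therefore is a ball of the periodic extension, forcing $b_\phi(n)\le P$ for all $n$ and contradicting the unboundedness of $b_\phi$. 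With that substitution your proof is complete.
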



\begin{proof}
By Lemma~\ref{Lem:OnY,UniquelyExtended} and Lemma~\ref{lem_periodic_ext}, (1) implies (2).
Now we assume (2) holds.
Let $\mathcal{A}$ be the alphabet of $\phi$.
Let $\tilde{x}$ be a lift of $x\in VX$.
Define a new coloring $\psi$ with an alphabet $\mathcal{A}\sqcup VG$ as
\begin{equation*}
\psi(v)=
\begin{cases} x      & \text{ if }  v=\tilde{x} \text{ for some } x\in VG, \\
                                \phi(v) &\text{ otherwise}.
\end{cases}
\end{equation*}
Denote by $[\B_n(u)]_\psi$ a $\psi$-colored $n$-ball. 
As ever $[\B_n(u)]$ means a $\phi$-colored $n$-ball.
A map $\mathbb{B}_\psi(n)\rightarrow \mathbb{B}_\phi(n)$ which defined by $[\B_n(x)]_\psi\mapsto[\B_n(x)]$ is surjective. 
It implies $b_\phi(n) \le b_\psi(n)$.
Since $X$ is not a finite graph, $b_\phi(n)$ is strictly increasing.
Thus, it is enough to show that $b_\psi$ is linear.

 {Let us denote by $\metricd(x,G)=\min\{\metricd(x,g):g\in VG\}$.}
Fix a positive integer $n$.
If $x$ be a vertex such that $\metricd(x,G)\le n$, 
then $[\B_n(x)]_\psi\not=[\B_n(y)]_\psi$ for any other $y\in VX$.
If $x$ be a vertex such that $\metricd(x,G) > n+1$,
then $[\B_{n+1}(x)]_\psi=[\B_{n+1}(x)]$.
Thus, $[\B_{n}(x)]$ has the unique extension to a colored $(n+1)$-ball.
Since $X$ is not finite, $\psi$ has at least one special $n$-ball for each $n$.
Thus,  {for $x$ such that $\metricd(x,G)=n+1$, $[\B_n(x)]$ is the unique special $n$-ball and it has exactly two extensions to colored $(n+1)$-balls.}
It means that $b_\psi(n)=n+|\mathcal{A}|+|VG|$ for all $n$.
\end{proof}


\section{Recurrence functions of colorings of trees}\label{Sec_recurrence}

In this section, we will extend the notion of recurrence functions $R(n), R''(n)$ for words to colorings of trees. 
We will show that the quasi-Sturmian colorings of trees satisfy a certain inequality between $R''(n)$ and $b(n)$. We also explain that the existence of $R(n)$ is related to unboundedness of the quasi-Sturmian colorings of trees.

Let us briefly recall recurrence functions of words  {(see Section 10.9 in \cite{AS} for definitions and details)}. 
Recurrence functions are important objects related to symbolic dynamics. 
Let $\Sigma$ be a finite alphabet. Let $\Sigma^*$ be the set of finite words over $\Sigma$ and $\Sigma^\mathbb{N}$ be the set of infinite words over $\Sigma$.
For $\mathbf{u} \in \Sigma^* \cup \Sigma^\mathbb{N}$, we denote by $F_n (\mathbf{u})$ the set of factors of length $n$ of $\mathbf{u}$.

A recurrence function $R_{\mathbf{u}}(n)$ is defined as the smallest integer $m \ge 1$ such that every factor of length $m$ contains all factors of length $n$.
%
%
%
%
It is known that such  {an integer} $R_{\mathbf{u}}(n)$ exists for all $n$ if and only if the word is \emph{uniformly recurrent}, i.e. any subword of the word infinitely occurs with bounded gaps. 
Another recurrence function $R''_\mathbf{u}(n)$ is defined as
$$
R''_\mathbf{u}(n)=\min\{m\in \mathbb{N}~|~F_n(\mathbf{u})=F_n(\omega)\text{ for some }\omega \in F_m (\mathbf{u}) \},
$$
i.e. it is the length of the smallest factor of $\mathbf{u}$ that contains all factors of length $n$ of $\mathbf{u}$. From the definition, the following fact immediately holds.

\begin{remark} \label{rmk6.1_qs}
For all $n \ge 0$, $R''_\mathbf{u}(n) \ge p_\mathbf{u}(n)+n-1$ for any word $\mathbf{u}$.
\end{remark}

Recall that a word $\mathbf{u}$  {is said to have} \emph{grouped factors} if, for all $n \ge 0$, it satisfies $ R''_\mathbf{u}(n)=p_\mathbf{u}(n)+n-1$. 
If there is $n_0$ such that the equality holds for all $ n \ge n_0 $, we say that $\mathbf{u}$ has \emph{ultimately grouped factors}.
Cassaign suggested some conditions that guarantee the equality.

\begin{thmnn}[\cite{Ca}]
A word $\mathbf{u}$ is Sturmian if and only if $R''_\mathbf{u}(n) =2n$ for every $n \ge 0$.
A uniformly recurrent word on a binary alphabet has ultimately grouped factors if and only if it is periodic or quasi-Sturmian.
\end{thmnn}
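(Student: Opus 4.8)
The plan is to prove the two assertions separately and to handle the two implications of the second by different tools; throughout I write $s(n)=p_\mathbf{u}(n+1)-p_\mathbf{u}(n)$. For the first assertion, the forward direction is immediate from Remark~\ref{rmk6.1_qs}: a Sturmian word has $p_\mathbf{u}(n)=n+1$, so $R''_\mathbf{u}(n)\ge p_\mathbf{u}(n)+n-1=2n$, and the matching upper bound $R''_\mathbf{u}(n)\le 2n$ is the classical fact that Sturmian words have grouped factors (the unique right-special factor of each length makes all $n+1$ factors of length $n$ occur consecutively inside a block of length $2n$). For the converse I would read off structural information from the identity $R''_\mathbf{u}(n)=2n$ itself: taking $n=1$ forces $p_\mathbf{u}(1)=2$, so the alphabet is binary; the inequality $2n=R''_\mathbf{u}(n)\ge p_\mathbf{u}(n)+n-1$ gives $p_\mathbf{u}(n)\le n+1$; and since $R''_\mathbf{u}(n)=2n$ grows faster than the slope-one linear growth forced by eventual periodicity, $\mathbf{u}$ must be aperiodic, whence Morse--Hedlund gives $p_\mathbf{u}(n)\ge n+1$. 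Combining the two bounds yields $p_\mathbf{u}(n)=n+1$, i.e.\ $\mathbf{u}$ is Sturmian.

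For the backward implication of the second assertion (periodic or quasi-Sturmian $\Rightarrow$ ultimately grouped) the quasi-Sturmian case is precisely Cassaigne's statement already recorded in the introduction, so I may cite it directly; a self-contained alternative is to reprove it from Cassaigne's structure theorem $\mathbf{u}=\sigma(\mathbf{s})$ with $\mathbf{s}$ Sturmian and $\sigma$ non-periodic, transferring a grouping window from $\mathbf{s}$ (grouped by the first assertion) to $\mathbf{u}$ up to a bounded boundary correction that is absorbed once $n$ is large. The periodic case is a direct computation: if $\mathbf{u}$ has primitive period $p$, then $p_\mathbf{u}(n)=p$ for $n$ large and a block of length $p+n-1$ sweeps exactly one period, so $R''_\mathbf{u}(n)=p+n-1=p_\mathbf{u}(n)+n-1$.

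The forward implication of the second assertion is the substantial part, and I expect it to be the main obstacle. Assuming $\mathbf{u}$ uniformly recurrent, binary and ultimately grouped, and disposing of the periodic case, I must show $s(n)=1$ for all large $n$; here $s(n)$ equals the number of right-special factors of length $n$ and lies in $\{1,2\}$ for an aperiodic word (the value $0$ would force eventual periodicity). Grouping at level $n$ says that a single genuine occurrence of length $p_\mathbf{u}(n)+n-1$ meets every length-$n$ factor exactly once, i.e.\ realizes an Eulerian trail of the Rauzy graph $\Gamma_{n-1}$, and uniform recurrence supplies the strong connectivity making such trails relevant. The genuine difficulty is that abstract existence of an Eulerian trail does \emph{not} by itself force $s(n)=1$, since a balanced graph with several neutral bispecial vertices admits an Eulerian circuit; the resolution must combine realizability inside $\mathbf{u}$ with uniform recurrence. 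I would track bispecial factors through Cassaigne's second-difference formula $s(n+1)-s(n)=\sum_{|w|=n,\ w\ \text{bispecial}} m(w)$ and argue that a persistent value $s(n)=2$ produces two essentially independent cycles in the Rauzy graphs whose edge sets cannot both be swept by a single occurring window without repeating some length-$n$ factor, contradicting $R''_\mathbf{u}(n)=p_\mathbf{u}(n)+n-1$. Turning the phrase ``essentially independent cycles'' into a rigorous statement, and controlling how the grouping windows at consecutive lengths $n$ and $n+1$ constrain one another, is exactly the delicate combinatorial core of Cassaigne's argument and is where the real work lies.
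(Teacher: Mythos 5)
You should first note that the paper does not prove this statement at all: it is quoted as a known theorem of Cassaigne, with the citation \cite{Ca} carrying the entire burden, so there is no internal proof to measure your attempt against. On its own terms, your argument for the first assertion is essentially correct: the lower bound $R''_\mathbf{u}(n)\ge 2n$ is just the inequality $R''_\mathbf{u}(n)\ge p_\mathbf{u}(n)+n-1$ specialized to $p_\mathbf{u}(n)=n+1$, and your converse correctly combines $p_\mathbf{u}(n)\le n+1$ with Morse--Hedlund after ruling out eventual periodicity. The one soft spot is the upper bound $R''_\mathbf{u}(n)\le 2n$ for Sturmian words: uniqueness of the right-special factor of each length does not by itself produce a single \emph{occurring} window of length $2n$ meeting every length-$n$ factor exactly once; you need the specific structure of the Sturmian Rauzy graph (two cycles sharing a path) together with an argument that a Hamiltonian path in it is realized by an actual factor of $\mathbf{u}$. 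That is a true and classical fact, but as written your parenthetical is an appeal to the literature, not a proof.

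The genuine gap is the forward implication of the second assertion, which you yourself flag as ``where the real work lies.'' Showing that a uniformly recurrent, aperiodic, binary word with ultimately grouped factors must have $s(n)=1$ for all large $n$ is the entire content of Cassaigne's theorem, and your sketch stops exactly where that argument would have to begin: you correctly observe that the mere existence of an Eulerian or Hamiltonian sweep of the Rauzy graph does not exclude $s(n)=2$, and you propose to derive a contradiction from ``two essentially independent cycles,'' but you neither define that notion, nor show such cycles must exist when $s(n)=2$ persists, nor show that they obstruct a repeat-free window of length $p_\mathbf{u}(n)+n-1$. Until that is done the implication is unproved. There is also a circularity issue on the backward side: in the quasi-Sturmian case you invoke either Cassaigne's grouped-factors result or his structure theorem ($\mathbf{u}=\sigma(\mathbf{s})$ with $\mathbf{s}$ Sturmian), both of which come from the very source whose theorem you are reproving; if citation is permitted there, the whole statement may as well be cited, which is precisely what the paper does.
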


We want an analogous statement for quasi-Sturmian colorings of trees.
Let $(\cT,\phi)$ be a quasi-Sturmian coloring of a tree and $\cX=(X,i)$ be the quotient graph of $(\cT,\phi)$. 
We define $R_\phi(n)$ as the smallest radius $m$ such that every colored $n$-ball of $\phi$ occurs in $[\B_m(x)]$ for all $x\in V\cT$.
We define $R''_\phi(n)$ as the smallest radius $m$ such that every colored $n$-ball of $\phi$ occurs in $[\B_m(x)]$ for some $x\in V\cT$.


\begin{defn} A coloring of a tree $(\cT, \phi)$ is said to be \emph{recurrent} if, for any compact subtree $\cT'$, every colored ball appears in $\cT-\cT'$. 
A coloring of a tree is said to be \emph{uniformly recurrent} if $R_\phi(n)<\infty$ for all $n$.
\end{defn}


\begin{prop}\label{unbdd_unif}
Let $(\cT,\phi)$ be a quasi-Sturmian coloring of a tree. The following conditions are equivalent.
\begin{enumerate}

\item $(\cT,\phi)$ is of unbounded type.
\item $(\cT,\phi)$ is uniformly recurrent.
\item For any colored ball, it appears in $\cT- \pi^{-1}(S)$ for any finite set $S \subset X$.
\end{enumerate}

\begin{proof}

%
%
%
%

(1) implies (2) : 
Suppose $(\cT,\phi)$ is of unbounded type. Let $n \ge \initN$.
For arbitrary $v\in V\cT$, let $D=[\B_n(v)]$.
Consider $E=[\B_{n}(w)]$ which is distinct with $D$.
Since $\Lambda_w$ is infinite, there exists the minimal number $m=m_E \ge n$ in $\Lambda_w$.
Note that $m$ depends only on $E$ and not on $w$.

Let $F^1=[\B_{m}(v)]$. It is not $S_m$.
Let $[F^1F^2\cdots F^{l}S_{m}]$ be the shortest path from $F^1$ to $S_{m}$ in $\G_{m}$.
For arbitrary colored $m$-balls $F$ and $F'$, if $F\not=S_m$, then $F$ has the unique extension. Thus, if $F$ is weakly adjacent to $F'$, then $F$ is strongly adjacent to $F'$.
Therefore, there is a path $[v-v_2-v_3-\cdots-v_{l}-w']$ in $\cT$ such that $[\B_{m}(v_i)]=F^i$, $i=2,\cdots, l$, and $[\B_{m}(w')]=S_{m}$.

Since $S_m$ occurs in $[\B_{m+l}(v)]$, $E$ occurs in $[\B_{n+l}(v)]$.
Since $l\le |V\G_{m}|=m+c$, $E$ occurs in $[\B_{n+m+c}(v)]$.
Define $m_D=0$.
Every colored $n$-ball occurs in $[\B_{n+M+c}(v)]$ where $M=\max\{m_E:E\in\mathbb{B}_\phi(n)\}$.
Thus, $R_\phi(n)\le n+M+c$.



(2) implies (3) : Suppose that $R_\phi(n)$ exists for all $n$. 
Since the quotient graph $X$ is infinite, for any finite $S\subset X$, there is $x$ such that $\B_{R_\phi(n)}(x)\subset \cT-\pi^{-1}(S)$.

(3) implies (1) : Assume that ($\cT$,$\phi$) is of bounded type.
Let $v$ be a vertex of maximal type $\maxNN$.
By Proposition~\ref{Prop:TheQuotientGraphOfBoundedType}, all vertices in $X - G$ is of maximal type larger than $\maxNN$.
Therefore, $[\B_{\maxNN+1}(v)]$ does not appear in $\cT - \pi^{-1}(G)$.
\end{proof}
\end{prop}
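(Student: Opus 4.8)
The plan is to establish the equivalence through the cyclic chain $(1)\Rightarrow(2)\Rightarrow(3)\Rightarrow(1)$, where the implication $(1)\Rightarrow(2)$ carries essentially all of the content and the remaining two are short. Throughout I would work with $n\ge\initN$, where $\G_n$ is available and $|V\G_n|=b_\phi(n)=n+c$.

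For $(1)\Rightarrow(2)$, fix such an $n$ and a base vertex $v$; I would bound $R_\phi(n)$ by showing every colored $n$-ball $E$ occurs within a uniformly bounded radius of $v$. Since $\phi$ is of unbounded type, every vertex is of unbounded type (Lemma 2.15 in \cite{KL}), so for a center $w$ of $E$ the type set $\Lambda_w$ is infinite; let $m=m_E\ge n$ be its least element that is at least $n$, so that $[\B_m(w)]=S_m$ and $E$ is exactly the radius-$n$ core of $S_m$. The key observation is that $m_E$ depends only on $E$, not on the chosen center $w$. It therefore suffices to locate a nearby center of $S_m$: in the factor graph $\G_m$, take a shortest path from $[\B_m(v)]$ to $S_m$, of length at most $|V\G_m|=m+c$. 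Because every colored $m$-ball other than $S_m$ extends uniquely, weak adjacency coincides with strong adjacency along this path, so it lifts to an honest path in $\cT$ ending at a vertex $w'$ with $[\B_m(w')]=S_m$. Then $E=[\B_n(w')]$ appears in $\B_{n+m+c}(v)$, and setting $M=\max\{m_E : E\in\mathbb{B}_\phi(n)\}$ gives $R_\phi(n)\le n+M+c<\infty$.

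For $(2)\Rightarrow(3)$, since $\phi$ is quasi-Sturmian, $b_\phi$ is unbounded, hence $X$ is an infinite connected graph by Theorem~\ref{Thm:Main1}. Given a finite $S\subset X$, I would choose a vertex $y\in VX$ with $\metricd_X(y,S)>R_\phi(n)$ and a lift $x$ of $y$; because $\pi$ does not decrease distances, $\B_{R_\phi(n)}(x)$ is disjoint from $\pi^{-1}(S)$, and by definition of $R_\phi(n)$ it contains every colored $n$-ball. Letting $n$ range over all values shows every colored ball appears in $\cT-\pi^{-1}(S)$. For $(3)\Rightarrow(1)$, I would argue by contraposition: if $\phi$ is of bounded type, pick a vertex $v$ of maximal type $\maxNN$. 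By Proposition~\ref{Prop:TheQuotientGraphOfBoundedType} every vertex of $X-G$ has maximal type strictly greater than $\maxNN$, so every center of $[\B_{\maxNN+1}(v)]$ lies over $G$; taking the finite set $S=VG$, this colored ball never appears in $\cT-\pi^{-1}(G)$, contradicting (3).

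The main obstacle is the implication $(1)\Rightarrow(2)$, and within it the two delicate points are: (a) obtaining a recurrence bound that is \emph{uniform} over all colored $n$-balls $E$, which I would handle by noting that $m_E$ is intrinsic to $E$ and taking the maximum $M$ over the finite set $\mathbb{B}_\phi(n)$; and (b) upgrading a shortest path in the abstract factor graph $\G_m$ to a genuine path of the same length in the tree $\cT$, which relies precisely on the unique-extension property of non-special $m$-balls, so that the lift neither branches nor fails to exist.
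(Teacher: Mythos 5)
Your proposal is correct and follows essentially the same route as the paper: the cyclic chain $(1)\Rightarrow(2)\Rightarrow(3)\Rightarrow(1)$, with the main implication handled by taking $m_E$ as the least element of $\Lambda_w$ at least $n$, lifting a shortest path to $S_{m_E}$ in $\G_{m_E}$ to a genuine path in $\cT$ via the unique-extension (hence strong-adjacency) property of non-special balls, and bounding $R_\phi(n)\le n+M+c$. The two short implications also match the paper's argument, including the contrapositive for $(3)\Rightarrow(1)$ using the vertex of maximal type $\maxNN$ and Proposition~\ref{Prop:TheQuotientGraphOfBoundedType}.
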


Recall that we denote by  {$\cZ$} the quotient graph of $\cT-\widetilde{G}$ with respect to the coloring $\phi$ and denote by $\radiusr(x,G)$
$$
\mathrm{\radiusr}(x,G)=\max\{\metricd(x,y):y\in VG\}.
$$

\begin{prop}\label{unbdd_ineq}
Let $(\cT,\phi)$ be a quasi-Sturmian coloring.
\begin{enumerate}
\item Let $\phi$ be of unbounded type. As in Proposition ~\ref{graph_develop}, the factor graph $\G_n$ is of Case (I) on $n=n_k$.
Then, we have 
$$
R_\phi''(n) = n+ \Bigl\lfloor\frac{b_\phi(n_k)}{2}\Bigr\rfloor
								 \quad \mathrm{for} \; \; n_{k-1}<n \le n_k.
$$ 

\item Let $\phi$ be of bounded type. Let $\xNone$ be the vertex of $X$ which is of maximal type $\maxNN$.

\begin{enumerate}
\item If $Z$ is acyclic, then we have 
$$
R_\phi''(n) = n+ \Bigl\lfloor\frac{1}{2}(b_\phi(n_k)-|G|+\radiusr(\xNone,G)+1)\Bigr\rfloor
				\quad \mathrm{for} \; \;n_{k-1}<n \le n_k.
$$ 

\item If $Z$ is cyclic, then we have 
$$
R_\phi''(n) = n+\Bigl\lfloor\frac{1}{2}(b_\phi(n)-|G|+\radiusr(\xNone,G)+1)\Bigr\rfloor 
				\quad \mathrm{for} \; \mathrm{all} \; \; n \ge \maxNN.
$$ 
\end{enumerate}
\end{enumerate}
\end{prop}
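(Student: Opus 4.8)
The plan is to reduce the computation of $R''_\phi(n)$ to a covering problem on the factor graph $\G_n$ and then to run that computation along the induction algorithm of Proposition~\ref{graph_develop}. First I would record the reformulation: a colored $n$-ball $D$ occurs in $[\B_m(x)]$ precisely when some $z$ with $[\B_n(z)]=D$ satisfies $\metricd(x,z)\le m-n$, so setting $\rho(n)=\min_x \min\{r:\B_r(x)\text{ meets every class of }\mathbb{B}_\phi(n)\}$ gives $R''_\phi(n)=n+\rho(n)$. This $\rho(n)$ is an eccentricity: since every non-special $n$-ball has a unique extension, a shortest path in $\G_n$ between two classes lifts to a geodesic of $\cT$ realizing them — the mechanism already used in the proof of Proposition~\ref{unbdd_unif} — so the tree-distance from $x$ to the nearest realization of a class is controlled by $\G_n$. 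The essential subtlety, which I would isolate as the main lemma, is that traversal through the special ball is \emph{not} free: a realization of $S_n$ carries one of the two extensions $A_{n+1}$ or $B_{n+1}$, and by Lemma~\ref{lem2.3_qs} it is adjacent only to the classes reachable from that extension. Hence a single center cannot in general descend into both the $A_n$-arm and the $B_n$-arm; resolving this requires passing to a realization of $S_{n+1}$ branching to both, which carries the same obstruction one level higher.

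For the unbounded case I would anchor the computation at the Case~(I) levels. On $n=n_k$ the factor graph is the linear graph of Proposition~\ref{graph_develop} with $b_\phi(n_k)$ vertices, and since in Case~(I) one of $S_{n_k},C_{n_k}$ coincides with $A_{n_k}$ or $B_{n_k}$, a single realization of $S_{n_k}$ already meets both of its neighbours, so the branching obstruction disappears and $\rho(n_k)$ equals the radius of a path on $b_\phi(n_k)$ vertices, namely $\lfloor b_\phi(n_k)/2\rfloor$. It remains to propagate this value backwards across the tripod phase $n_{k-1}<n<n_k$ (Case~(II)). Here the nested obstruction forces any covering ball to unfold the special balls of all levels $n,n+1,\dots,n_k-1$ before the two extension-arms can be reached simultaneously, and the unfolding closes up exactly at the Case~(I) level $n_k$; thus the effective covering geometry at level $n$ is isometric to that of the path $\G_{n_k}$, so $\rho(n)$ is constant on $n_{k-1}<n\le n_k$ and equals $\lfloor b_\phi(n_k)/2\rfloor$. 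The lower bound $\rho(n)\ge\lfloor b_\phi(n_k)/2\rfloor$ is the substantive half and comes from this unfolding; the upper bound follows by centering at the midpoint of the unfolded path.

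For the bounded case I would start from Proposition~\ref{Prop:TheQuotientGraphOfBoundedType} and Theorem~\ref{Thm:AnIffConditionOfTheQuotientGraphOfBoundedType}: the quotient graph is the union of the finite graph $G$ and a ray, the lift of the ray carries the periodic coloring with quotient $\cZ$, and $\xNone$ is the gateway vertex of maximal type $\maxNN$ joining $G$ to the ray. A ball meeting every class must reach every class supported on the ray and every class supported in $G$. I would split $\rho(n)$ accordingly: the ray contributes the same path-covering term as before but now with $b_\phi(n_k)-|G|$ classes (those living in $G$ removed), while reaching all of $G$ from the gateway costs $\radiusr(\xNone,G)$; combining the two reaches through $\xNone$ and optimising the center yields $\lfloor\frac12(b_\phi(n_k)-|G|+\radiusr(\xNone,G)+1)\rfloor$. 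When $Z$ is acyclic the ray side behaves exactly as the linear Case~(I) graph, so the value is frozen at $b_\phi(n_k)$ and the formula holds for $n_{k-1}<n\le n_k$; when $Z$ is cyclic the periodic extension produces a cycle in $\G_n$ on the ray side, the phase structure degenerates, and the relevant count is the current $b_\phi(n)$, giving the formula for all $n\ge\maxNN$.

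The hard part will be the nested branching analysis: making precise the sense in which a covering ball must unfold the special balls of every intermediate level, and proving that this unfolding is isometric to the path $\G_{n_k}$ so that $\rho(n)$ is genuinely constant across a phase rather than merely bounded. The floors and the additive constants $-|G|+\radiusr(\xNone,G)+1$ in the bounded case are delicate, since they depend on the parity of the relevant path length and on whether the optimal center lands in $G$ or on the ray; I would handle these by tracking the two extreme classes realizing the diameter of the unfolded graph, locating the center on the geodesic between them, and checking that the remaining arm and the $G$-part stay within the computed radius. The cyclic-versus-acyclic dichotomy for $Z$ is the final point to pin down, as it is exactly what decides whether $b_\phi$ is evaluated at $n_k$ or at $n$.
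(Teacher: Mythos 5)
Your proposal follows essentially the same route as the paper's proof: reducing $R''_\phi(n)$ to a covering-radius computation on $\G_n$, lifting paths of $\G_n$ to geodesics in $\cT$ via the fact that weak adjacency implies strong adjacency for non-special balls, isolating the branching obstruction at $S_n$ (a single realization reaches only one of the two extensions), anchoring the count at the Case~(I) levels $n_k$ where the obstruction vanishes, and adding the $-|G|+\radiusr(\xNone,G)+1$ correction through the gateway vertex in the bounded case. The points you flag as "the hard part" (the unfolding across the tripod phase giving $\metricd(D_n,E_n)\ge b_\phi(n_k)-1$, and the acyclic/cyclic dichotomy for $Z$ deciding whether $b_\phi$ is evaluated at $n_k$ or at $n$) are exactly the steps the paper carries out explicitly, so your plan matches the published argument.
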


\begin{proof}
(1) 
In the case of a quasi-Sturmian coloring of unbounded type, the evolution of the factor graph follows Proposition~\ref{graph_develop}. Then, we can choose an infinite sequence $\{n_k\}$ such that $\G_{n_k}$ is in Case (I) of Proposition~\ref{graph_develop}. 
For any colored $n_k$-balls $D$ and $E$, assume that $D$ is weakly adjacent to $E$. 
If $D$ is not a special ball, then $D$ has a unique extension. Hence, $D$ is strongly adjacent to $E$. If $D=S_{n_k}$, then assume that $D$ is weakly adjacent to $E$ and $F$. Since one of $E$ and $F$ is $C_{n_k}$, say $E$, $D$ is strongly adjacent to $E$ by Lemma ~\ref{lem2.3_qs}. Hence, there exist vertices $v$,$u$ and $w$ in $\cT$ with $\metricd(v,u)=\metricd(v,w)=1$ such that $D=[\B_{n_k}(v)]$, $E=[\B_{n_k}(u)]$, and $F=[\B_{n_k}(w)]$ in $\cT$. Therefore, We can take a path with length $b_\phi(n_k)-1$ consisting of centers of all the colored $n_k$-balls in $\cT$. 

Thus, we have 
$$
R_\phi''(n_k)\le n_k + \Bigl\lfloor\frac{b_\phi(n_k)}{2}\Bigr\rfloor.
$$
Let $D_{n_k}$, $E_{n_k}$ be the colored $n_k$-balls which are the end points of the graph $\G_{n_k}$. The distance between  $D_{n_k}$ and $E_{n_k}$ in $\G_{n_k}$ is $b_\phi(n_k)-1$, so  {for any vertices $z,z'$ such that $[\B_{n_k}(z)]=D_{n_k}$ and $[\B_{n_k}(z')]=E_{n_k}$, $\metricd(z,z') \ge b_\phi(n_k)-1$.} Thus, it implies 


$$
R_\phi''(n_k) \ge n_k + \Bigl\lfloor\frac{b_\phi(n_k)}{2}\Bigr\rfloor.
$$

Hence, we have 
$$
R_\phi''(n_k) = n_k + \Bigl\lfloor\frac{b_\phi(n_k)}{2}\Bigr\rfloor.
$$

Now, let us consider the case $n_{k-1}<n<n_k$. 
 {Then, $\G_n$ is of Case (II) or Case (III). Let us define two colored $n$-balls $D_n$ and $E_n$. If $\G_n$ is of Case (II), then $D_n$ and $E_n$ is defined as the colored $n$-balls which are the end points of two paths starting from $S_n$ to $A_n$ and $B_n$ in $\G_n$, respectively. If $\G_n$ is of Case (III), then $D_n$ and $E_n$ is defined as the end points of $\G_n$, respectively. Now, let us compute the distance between $D_n$ and $E_n$.}

Let $D$, $E$ be colored $n$-balls. If $D \ne S_n$ is weakly adjacent to $E$, then $D$ is strongly adjacent to $E$. However, if $D=S_n$, then $D$ is stronly adjacent to either $A_n$ and $C_n$ or $B_n$ and $C_n$. If $\G_n$ is of Case (II) for all $n_{k-1}<n<n_k$, then $\metricd(D_n,E_n) \ge b_\phi(n_k)-1$. Otherwise, $\G_n$ is of Case (III) only for $n=n_k-1$ and it is of Case (II) for $n\ne n_k-1$. Then, $\metricd(D_n,E_n) \ge b_\phi(n_k-1)-1+1$. Hence, $\metricd(D_n,E_n) \ge b_\phi(n_k)-1$, which implies 
$$
R_\phi''(n) \ge n + \Bigl\lfloor\frac{b_\phi(n_k)}{2}\Bigr\rfloor
$$ for $n_{k-1}<n<n_k$.
On the other hand, since each $n$-ball is the restriction of an ${n_k}$-ball and there exists the path with length $b_\phi(n_k)-1$ consisting of centers of all the colored $n_k$-balls in $\cT$, we have
$$ 
R_\phi''(n) \le n_k-(n_k-n) + \Bigl\lfloor\frac{b_\phi(n_k)}{2}\Bigr\rfloor
				= n+ \Bigl\lfloor\frac{b_\phi(n_k)}{2}\Bigr\rfloor
$$ for $n_{k-1}<n<n_k$. 
Thus, we have 
$$
R_\phi''(n) = n + \Bigl\lfloor\frac{b_\phi(n_k)}{2}\Bigr\rfloor.
$$ 
for $n_{k-1}<n \le n_k$.

(2)-(a) 
Let $Z$ be acyclic. Assume that $n\ge \maxNN$. 
The evolution of the factor graph $\G_n$ follows Proposition~\ref{graph_develop}.
Hence, we can consider the same argument with (1). The difference between (1) and (2)-(a) is the existence of the compact part $G$.  {Now, we can take a finite graph $G'$ in $\G_{n_k}$ isomorphic to $G$.} Since every vertex in $\G_{n_k}-G'$ has at most degree 2, the maximal distance between any two vertices in $\G_{n_k}$ is $b_\phi(n_k)-|G|+\radiusr(\xNone,G)$.
 
Thus$$
R_\phi''(n_k) \ge n_k + \Bigl\lfloor\frac{1}{2}(b_\phi(n_k)-|G|+\radiusr(\xNone,G)+1)\Bigr\rfloor.
$$

 {Now, we can choose a path $P$ in $\cT$ isomorphic to $\G_{n_k}$, i.e. there exsists a bijection $f$ : $VP \rightarrow V\G_{n_k}$ such that two vertices $u$ and $v$ are adjacent in $P$ if and only if $f(u)$ (respectively $f(v)$) is weakly adjacent to $f(v)$ (respectively $f(u)$) in $\G_{n_k}$. This is because weak adjacency implies strong adjacency by the same argument with (1).} Thus, $$
R_\phi''(n_k) \le n_k + \Bigl\lfloor\frac{1}{2}(b_\phi(n_k)-|G|+\radiusr(\xNone,G)+1)\Bigr\rfloor.
$$

Hence, $$
R_\phi''(n_k) = n_k + \Bigl\lfloor\frac{1}{2}(b_\phi(n_k)-|G|+\radiusr(\xNone,G)+1)\Bigr\rfloor.
$$
Now, let us consider the case $n_{k-1}<n<n_k$. Then, we can compute $R_\phi''(n)$ by the same aregument with (1). The difference between (1) and (2)-(a) is also the existence of the compact part $G$.
Thus, 
$$
R''_\phi(n)=n+\Bigl\lfloor\frac{1}{2}(b_\phi(n_k)-|G|+\radiusr(\xNone,G)+1)\Bigr\rfloor
$$
for $n_{k-1}<n<n_k$. 
Hence, we have 
$$
R''_\phi(n)=n+\Bigl\lfloor\frac{1}{2}(b_\phi(n_k)-|G|+\radiusr(\xNone,G)+1)\Bigr\rfloor
$$ for $n_{k-1}<n \le n_k$.

(2)-(b) 
Let $Z$ be cyclic. Assume that $n \ge \maxNN$. The special $n$-ball is of degree 3 in $\G_n$. Moreover, $S_n$ is uniquely of degree 3 not overlapping the finite graph $G$ in Proposition ~\ref{Thm:TheQuotientGraphOfQS}. 
The cycle connecting to $S_n$ in $\G_n$ is also unique except for cycles in $G$.  {Since weak adjacency implies strong adjacency, we can take a finite graph $G''$ in $\cT$ isomorphic to $G$, i.e. there exsists a bijection $g$ : $VG'' \rightarrow VG$ such that two vertices $u$ and $v$ are adjacent in $G''$ if and only if two vertices $g(u)$ and $g(v)$ are adjacent in $G$.}
To contain all of the colored $n$-balls, it is sufficient for an $R_\phi ''(n)$-ball to contain the finite graph $G''$ and the path $[A_n...C_nS_nB_n...[\B_n(\tildexNone)]]$, where a vertex $\tildexNone$ is a lifting of $\xNone$ to $\cT$. Since the length of the path is $b_\phi(n)-|G|$, we have
$$
R''_\phi(n)=n+\Bigl\lfloor\frac{1}{2}(b_\phi(n)-|G|+\radiusr(\xNone,G)+1)\Bigr\rfloor
$$
when the equality holds if and only if $G$ is linear.
\end{proof}


 {We note that the converse of the proposition does not hold.
Consider a sequence of words 
$$X_k = \begin{cases} a L_k a L_k b L_k a , &\text{ if  $k$ is odd},\\
b L_k a L_k b L_k b, &\text{ if  $k$ is even}, \end{cases}$$ where $L_k$ is given by 
$L_1 = \varepsilon$, the empty word and $L_{k+1} = L_k a L_k$ for odd $k$, $L_{k+1} = L_k b L_k$ for even $k$ recursively.
Then $L_k$ is a palindrome and  we get
$$ X_1 = a a b a, \qquad X_2 = baaabab , \qquad X_3 = a aba a aba b aba a, \qquad \dots $$
Since $X_k$ is a factor of $X_{k+1}$, we have a coloring $\phi$ of the 2-regular tree by the limit of $X_k$. 
Let $n_k = | L_k a_k L_k | =2^k-1$. 
Then we can check  that
for $n_{k-1} < n \le n_k$, we have
$$
R_\phi''(n) - n =  \Bigl\lfloor\frac{ | X_k| }{2}\Bigr\rfloor
$$
and $$
b_\phi(n_k)  = | X_k|. 
$$ 
Thus, we have
$$
R_\phi''(n) = n+ \Bigl\lfloor\frac{b_\phi(n_k)}{2}\Bigr\rfloor
								 \quad \mathrm{for} \; \; n_{k-1}<n \le n_k.
$$ 
}


\end{document}